\documentclass[11pt,a4wide]{article}
\usepackage{amsmath,amssymb,amsthm}
\usepackage[margin=1in]{geometry}
\usepackage{color}
\usepackage{hyperref}
\usepackage{enumitem}

\newtheorem{theorem}{Theorem}[section]
\newtheorem{proposition}[theorem]{Proposition}
\newtheorem{lemma}[theorem]{Lemma}
\newtheorem{corollary}[theorem]{Corollary}
\newtheorem{assumption}{Assumption}
\theoremstyle{definition}
\newtheorem{definition}[theorem]{Definition}
\newtheorem{remark}[theorem]{Remark}

\newcommand{\R}{\mathbb{R}}
\newcommand{\Prob}{\mathcal{P}}
\newcommand{\Wass}{\mathcal{W}}

\DeclareMathOperator{\Ent}{Ent}
\DeclareMathOperator{\Law}{Law}

\DeclareMathOperator{\Supp}{Supp}

\title{A Mean-Field Theory of Transformers:\\
Well-Posedness of the Coupled Data--Parameter Dynamics\\
and Global Convergence of Training}

\author{Hailiang Liu\thanks{Department of Mathematics, Iowa State University, Ames, IA 50011, USA (\texttt{hliu@iastate.edu}).} 
\and 
{Michael Herty\thanks{Institut für Geometrie und Praktische Mathematik, RWTH Aachen University, 52056 Aachen, Germany. Email: \texttt{herty@igpm.rwth-aachen.de} and 
Extraordinary Professor, Department of Mathematics and Applied Mathematics,  University of Pretoria, South Africa
}}
}
\date{\today}

\begin{document}
\maketitle
\begin{abstract}
We develop a rigorous mean-field theory for transformer networks that captures
 two large-scale limits inherent in the architecture: the
number of tokens $N\to\infty$ in the input  sequence and the number of attention heads
$H\to\infty$ in each layer. It further considers an infinite number of layers leading
to a time-continuous formulation.  The resulting framework couples two interacting
mean-field objects: a token distribution $\mu_t\in\mathcal P(\mathbb R^d)$,
which evolves through network depth $t\in[0,T]$ according to a
McKean--Vlasov transport equation, and the attention-parameter distribution
$\rho_s\in\mathcal P(\Theta)$, that evolves through training time
$s\ge0$ according to a Wasserstein gradient flow of the empirical risk, with
optional entropic or Tikhonov  regularization.

We establish a comprehensive analytical foundation for the system coupling 
the transformer and the training dynamics. 
First, for fixed parameter distribution $\rho_s$, we prove existence, uniqueness, and
stability of the token mean-field dynamics $\mu_t$, together with quantitative
propagation-of-chaos estimates for finite token systems. Second, we derive the
large-head limit of multi-head attention through a quantitative law-of-large-
numbers argument. Third, we formulate the first--order 
optimality system for the functional gradient of the
empirical risk with respect to the parameter distribution and prove its
well-posedness. Fourth, we establish global well-posedness of the resulting
nonlinear Fokker--Planck system describing the coupled mean-field and training dynamics.

Beyond well-posedness, we connect the mean-field formulation to optimization.
For shallow, single-layer attention models, we prove exponential convergence
to the entropy-regularized global optimum under a log-Sobolev condition. For
genuinely deep, compositional transformers, we establish local linear
convergence under a Neural Tangent Kernel non-degeneracy condition. 

\end{abstract}

{\bf MSC classification:} 35Q83, 68T07, 82C40, 90C30
%82C22

\section{Introduction}

Transformers act on two combinatorially large objects at once: the set of $N$ tokens within a single input, coupled through self-attention, and the set of $H$ attention heads (and, more generally, hidden units of the position-wise feed-forward network) that parameterize a layer. Sending $N\to\infty$ replaces the token cloud by a probability measure $\mu_t$ and turns the forward pass into a non-linear transport (McKean--Vlasov) partial differential equation; sending $H\to\infty$ replaces the discrete set of head parameters by a probability measure $\rho_s$ and turns gradient-based training into a Wasserstein gradient flow. We refer to $\mu_t$ as the \emph{data measure} (indexed by continuous network depth $t\in[0,T]$) and to $\rho_s$ as the \emph{parameter measure} (indexed by continuous training time $s\ge0$).

The purpose of this paper is to make this measure-valued mean-field picture mathematically rigorous, addressing four gaps that, to the best of our knowledge, have so far  been discussed only heuristically.
We review the relevant literature and discussions in the next section. %see next section for a list of references and discussions.   %when this construction is first introduced:

Our main result is Theorem–\ref{thm:full-wp} on the well--posedness of the coupled mean-field dynamics:  the forward evolution of  the measure  $\mu$, the adjoint mean-field equation for $p$,  which characterizes the gradient of the loss $L$, as well as the mean-field gradient-descent training dynamics for the parameter distribution $\rho.$  For convenience, we restate the theorem here and refer to Section~\ref{sec:coupled} for details. 

\begin{theorem} 
	Under Assumptions~\ref{ass:compact-theta}--\ref{ass:confine} as given in Section 3,  the coupled system 
	\begin{equation}
		\left\{
		\begin{aligned}
			\partial_t\mu &+ \nabla_x\cdot\big(\mu\,V_{\rho_s,\mu}\big) = 0, & \mu(0,\cdot,s)&=\mu_0,\\
			-\partial_tp &- (V_{\rho_s,\mu}+F)\cdot\nabla_xp - (D_\mu V_{\rho_s,\mu})^\top \nabla_x p \mu  = 0, & p(T,\cdot,s) &= \tfrac{\delta\mathcal J}{\delta\mu}(\mu(T,\cdot,s)),\\
			\partial_s\rho &= \nabla_\theta\cdot\big(\rho\,\nabla_\theta\tfrac{\delta\mathcal L_\lambda}{\delta\rho}\big) + \beta\Delta_\theta\rho, & \rho(\cdot,0) &= \rho_0,
		\end{aligned}
		\right.
	\end{equation}	
admits a unique global solution $(\mu,p,\rho)$ with 
$$\rho\in C([0,\infty);\Prob_2(\R^p)),
$$
\iffalse  and, for every fixed $s$, $\mu(\cdot,\cdot,s),p(\cdot,\cdot,s)$ as in Theorem~\ref{thm:fb-wp}. Moreover the solution is global in $s$ (i.e.\ exists for all training times, not merely on a short interval).
	
	Under Assumptions~\ref{ass:compact-theta}--\ref{ass:confine}, the coupled system
	\eqref{eq:full-system} admits a unique global solution
	\[
	(\mu,p,\rho),
	\]
	where
	\[
	\rho\in C([0,\infty);\Prob_2(\mathbb R^p)).
	\]
	Moreover, f
	\fi 
For every training time $s\ge0$, the pair
	\[
	(\mu(\cdot,\cdot,s),\,p(\cdot,\cdot,s))
	\]
	is the unique solution of the forward--backward system characterized by
	Theorem~\ref{thm:fb-wp}. In particular, the coupled dynamics are globally
	well posed in the training-time variable $s$:  the solution exists
	and is unique for all $s\ge0$.
\end{theorem}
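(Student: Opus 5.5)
The strategy is to reduce the coupled system to a single nonlinear Fokker--Planck equation for $\rho$ alone. For each fixed $\rho\in\Prob_2(\R^p)$, Theorem~\ref{thm:fb-wp} supplies a unique forward--backward pair $(\mu^\rho,p^\rho)$. Substituting this pair into the first-order variation gives a velocity field
\[
b[\rho](\theta) := -\nabla_\theta\tfrac{\delta\mathcal L_\lambda}{\delta\rho}(\rho)(\theta),
\]
whose integrand pairs $\nabla_x p^\rho$ with $\mu^\rho$ against the head vector field, and which also carries the Tikhonov/confining term from Assumption~\ref{ass:confine}. The third equation then becomes the closed McKean--Vlasov equation
\[
\partial_s\rho=-\nabla_\theta\cdot(\rho\, b[\rho])+\beta\Delta_\theta\rho .
\]
We will solve this equation by a fixed point on $C([0,S];\Prob_2)$ equipped with $\sup_s\Wass_2$. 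Once it is solved, $(\mu,p)$ is recovered pointwise in $s$ as $(\mu^{\rho_s},p^{\rho_s})$. This recovery gives the second assertion of the theorem for free, and it also gives uniqueness of the full triple, since any solution triple must have $(\mu,p)(\cdot,\cdot,s)$ equal to the forward--backward solution for $\rho_s$.

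The key estimates concern the map $\rho\mapsto b[\rho]$. \textbf{(i)} We need a uniform bound and local Lipschitz continuity in $\theta$, namely $|b[\rho](\theta)|\le C(1+|\theta|)$. This follows from Assumption~\ref{ass:compact-theta} and the regularity assumptions on the attention kernel, combined with the a priori bounds on $\mu^\rho$ (moments) and on $\nabla_x p^\rho$ (sup bounds) from Theorem~\ref{thm:fb-wp}. \textbf{(ii)} We need Lipschitz dependence on the measure:
\[
\|b[\rho]-b[\rho']\|_{L^\infty_{loc}}\le C\,\Wass_2(\rho,\rho').
\]
To prove it, we use the stability part of Theorem~\ref{thm:fb-wp}. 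First, $\Wass$-stability of $\mu^\rho$ follows from Grönwall applied to the forward flow, since $V_{\rho,\mu}$ depends Lipschitzly on $\rho$ through the integral over $\theta$. Then, for the backward equation, we estimate $\nabla_x p^\rho-\nabla_x p^{\rho'}$ in $C^0$ by differentiating the characteristic representation. With (i) and (ii) in hand, the standard Sznitman/Funaki argument applies. We write the SDE
\[
d\theta_s=b[\rho_s](\theta_s)\,ds+\sqrt{2\beta}\,dW_s,\qquad \rho_s=\Law(\theta_s),
\]
and use synchronous coupling with Grönwall to obtain a contraction for small $S$, which yields local existence and uniqueness in $C([0,S];\Prob_2)$.

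Globality in $s$ comes from constants that do not depend on time. The Lipschitz constant in (ii) depends only on the moment bounds. The second moment obeys
\[
\tfrac{d}{ds}\int|\theta|^2\rho_s\le C\Big(1+\int|\theta|^2\rho_s\Big)+2\beta p,
\]
using the linear growth (and, if available, the confinement of Assumption~\ref{ass:confine}, which even gives a uniform-in-$s$ bound). The moments therefore stay finite on every bounded interval, and the local existence time $S$ depends only on them. We can iterate on $[kS,(k+1)S]$ to reach all $s\ge0$, and continuity into $\Prob_2$ follows from the SDE representation.

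I expect step (ii), the stability of the adjoint gradient $\nabla_x p^\rho$ with respect to $\rho$, to be the main obstacle. The backward equation contains the nonlocal term $(D_\mu V)^\top\nabla_x p\,\mu$, and its terminal data $\tfrac{\delta\mathcal J}{\delta\mu}(\mu_T)$ depends on $\rho$ through $\mu_T$. I would first try to obtain a Grönwall estimate on $\|\nabla_x p^\rho-\nabla_x p^{\rho'}\|_\infty$ along characteristics, using Lipschitz continuity of $\nabla_x\tfrac{\delta\mathcal J}{\delta\mu}$ in $\mu$. If a loss of derivatives appears, the fallback is to work with the second-order bounds on $p$ supplied by Theorem~\ref{thm:fb-wp}.
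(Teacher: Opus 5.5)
Your proposal follows the paper's own argument. The paper likewise uses the one-way coupling and takes the drift $-\nabla_\theta\frac{\delta\mathcal L_\lambda}{\delta\rho}(\cdot;\rho_s)$ from the frozen-$\rho_s$ forward--backward solve of Theorem~\ref{thm:fb-wp}; it then obtains a unique global solution of the closed mean-field Langevin equation by quoting Theorem~\ref{thm:mfld-wp} (whose proof is your small-$S$ contraction plus Gr\"onwall moment bounds) and reconstructs $(\mu,p)(\cdot,\cdot,s)$ from $\rho_s$, which gives uniqueness of the triple. The one difference is your step (ii): the paper does not prove it, but asserts it from Theorem~\ref{thm:fb-wp}, which as stated only gives $p\in C([0,T];\mathrm{Lip})$ and a $C(\Theta)$ estimate for $\frac{\delta\mathcal L}{\delta\rho}$ itself, so your fallback to ``second-order bounds on $p$'' has nothing to rest on, and a genuine proof of (ii) needs extra regularity such as $\nabla_x\frac{\delta\mathcal J}{\delta\mu}$ Lipschitz in $(x,\mu)$ (true for $\mathcal J(\mu)=\ell(\int x\,d\mu,y)$ under Assumption~\ref{ass:loss}).
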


The main result relies on the following results that we derive throughout the paper:
\begin{enumerate}
\item \emph{Existence and uniqueness} of the token mean-field dynamics $\mu$ for a \emph{fixed} parameter measure, and a \emph{quantitative} propagation-of-chaos statement making precise in what sense the empirical token measure of a finite transformer converges to $\mu_t$ as $N\to\infty$ (Section~\ref{sec:token-mf}).
\item Existence, and a quantitative rate, for the $H\to\infty$ limit identifying multi-head attention with the mean-field-parameter velocity field $V_{\rho,\mu}$ (Section~\ref{sec:head-mf}).
\item A rigorous \emph{first-order optimality system} (forward token PDE, backward adjoint PDE, and the resulting representation of the functional gradient $\delta \mathcal L/\delta\rho$) together with local well-posedness of the coupled forward--backward system that training must solve at each instant (Section~\ref{sec:optimality}).
\item Existence and uniqueness of the parameter (training) dynamics -- a \emph{nonlinear, non-local Fokker--Planck equation} -- and, most importantly, a precise link between this well-posedness theory and \emph{global convergence} of training: we identify exactly the structural regime (single mean-field attention layer, i.e., no depth-composition) in which training provably converges to the global optimum at an exponential rate via convexity and a log-Sobolev inequality, and the regime (genuinely deep, compositional transformers) in which only a local, Neural-Tangent-Kernel-type linear convergence guarantee is currently available (Sections~\ref{sec:training-wp}--\ref{sec:convergence}).
\end{enumerate}
Throughout, we build directly on the model and formal derivations introduced in the preliminary study that motivated this paper (Section~\ref{sec:model}), converting each of its formal equations into a precisely stated theorem with a complete proof or an explicit, clearly labeled set of sufficient hypotheses.

\subsection{Related work}
The token mean-field limit of self-attention has been introduced by Sander et al.\ for doubly-stochastic (Sinkhorn) attention \cite{sander2022sinkformers} and developed into a systematic dynamical-systems theory, including clustering and synchronization results, by Geshkovski, Letrouit, Polyanskiy and Rigollet \cite{geshkovski2023emergence,geshkovski2025mathematical}. The parameter mean-field limit is the transformer-specific descendant of the mean-field theory of two-layer neural networks of Mei, Montanari and Nguyen \cite{mei2018meanfield}, Chizat and Bach \cite{chizat2018global}, Herty, Visconti and Trimborn~\cite{hvt}, and Rotskoff and Vanden-Eijnden \cite{rotskoff2018parameters}, and of mean-field Langevin dynamics as developed by Hu, Ren, {\v S}i{\v s}ka and Szpruch \cite{hu2021meanfield} and Nitanda, Wu and Suzuki \cite{nitanda2022convex}. The joint (data-and-parameter) mean-field limit for deep transformers, including a Neural-Tangent-Kernel-based local convergence theorem, was recently obtained by Barboni, Furuya, de~Hoop and Peyr\'e \cite{barboni2026training}, building on the conditional-Wasserstein mean-field theory of deep residual networks of Barboni, Peyr\'e and Vialard \cite{barboni2024understanding} and on mean-field optimal control \`a la E, Han and Li \cite{e2019meanfield} and Carmona--Delarue \cite{carmona2018probabilistic}. Our contribution is a self-contained, elementary presentation of the full well-posedness \emph{and} convergence package under a uniform, transformer-specific set of assumptions, together with an explicit account of exactly where convexity (hence global convergence) is available and where it is not.

\setcounter{tocdepth}{1}
%\tableofcontents

\section{The Model: Transformers and Their Continuum Limits}
\label{sec:model}

We first recall the discrete model and its two continuum limits (depth and tokens), essentially as in the preliminary formulation, fixing notation used throughout the rest of the paper. Here, we follow the references~\cite{geshkovski2023emergence,geshkovski2025mathematical}.  

\subsection{Discrete transformer block}
Let $x_i^k\in\R^d$, $i\in[N]$, $X^k = [x_1^k,\dots,x_N^k]$, and consider the full residual transformer block
\begin{equation}
x_i^{k+1} = x_i^k + \eta\big(A(x_i^k,X^k) + F(x_i^k)\big),
\label{eq:discrete}
\end{equation}
where $\eta>0$ is a step size,
\begin{equation}
A(x_i^k,X^k) := \sum_{j=1}^N \alpha_{ij}^k V x_j^k, \qquad
\alpha_{ij}^k := \mathrm{softmax}_j\Big(\tfrac{1}{\sqrt{d_k}}\langle Qx_i^k, Kx_j^k\rangle\Big),
\label{eq:attn-discrete}
\end{equation}
and
\begin{equation}
F(x) = W_2\,\sigma(W_1 x + b_1) + b_2
\label{eq:ffn}
\end{equation}
is a position-wise feed-forward network. The model is parameterized by $\theta = (Q,K,V;W_1,W_2,b_1,b_2)$. Given input tokens $X^0\sim\mu^{\otimes N}$ and a label $y$, training seeks to minimize the empirical or population risk
\begin{equation}
\min_\theta\ \mathbb E\Big[\ell\Big(\tfrac1N\textstyle\sum_{i=1}^N x_i^L(\theta),\,y\Big)\Big].
\label{eq:pop-risk}
\end{equation}
Our framework accommodates a broad class of loss functions, allowing the specific choice to be tailored to the application. 
%Different losses can be considered as indicated by applications.

\subsection{Continuous-depth limit}
Setting $\eta\sim 1/L$ and $t_k = k\eta$, and formally letting $L\to\infty$, the discrete dynamics  in \eqref{eq:discrete} converges to the coupled ODE system
\begin{equation}
\dot x_i(t):=\frac{d}{dt}x_i(t) = A(x_i(t),X(t)) + F(x_i(t)), \qquad i\in[N], \quad t\in[0,T].
\label{eq:node}
\end{equation}
A Lie--Trotter splitting of this ODE system over each interval $[k, k+1]$ recovers the original two-sublayer transformer block, with self-attention followed by  the MLP.  This connection has been noted in previous work and in our preliminary derivation. We work directly with \eqref{eq:node} throughout, following the standard Neural-ODE viewpoint.  

\subsection{Token (data) mean-field limit}
Define the empirical token measure $\mu_t^N := \tfrac1N\sum_{j=1}^N\delta_{x_j(t)}$. As $N\to\infty$ we expect (Theorem~\ref{thm:poc-tokens} below makes this precise) $\mu_t^N \to \mu_t$ and the representative particle law $\mu_t=\Law(x(t))$ to solve the McKean--Vlasov equation
\begin{equation}
\dot x(t) = A_{\mu_t}(x(t)) + F(x(t)), \qquad
A_\mu(x) = \int K_\mu(x,y)\,Vy\,d\mu(y),
\label{eq:mv-token}
\end{equation}
\begin{equation}
K_\mu(x,y) = \frac{\exp\big(\tfrac1{\sqrt d}\langle Qx,Ky\rangle\big)}{\int \exp\big(\tfrac1{\sqrt d}\langle Qx,Ky'\rangle\big)\,d\mu(y')}.
\label{eq:kernel}
\end{equation}
Writing $a_\mu(x,y) := K_\mu(x,y)\,Vy$, the corresponding transport equation for $\mu_t$ is
\begin{equation}
\partial_t\mu_t + \nabla\cdot\big(v(x,\mu_t)\,\mu_t\big) = 0, \qquad v(x,\mu) := \int a_\mu(x,y)\,d\mu(y) + F(x), 
\label{eq:token-pde}
\end{equation}
and subject to initial conditions $\mu_0(\cdot)$ obtained as the limit of the initial data $x_i^0$.

\subsection{Multi-head and parameter mean-field limit}
Dropping the (position-wise, non-interacting) feed-forward part for clarity of exposition -- it is reincorporated without any additional difficulty in Remark~\ref{rem:ffn} -- a multi-head layer with $H$ heads $\theta_r=(Q_r,K_r,V_r)$, $r\in[H]$, gives
\begin{equation}
\dot x(t) = \frac1H\sum_{r=1}^H A(x(t),\mu_t, \theta_r) = \int_\Theta A(x(t),\mu_t,\theta)\,\rho^H(d\theta), \qquad \rho^H := \frac1H\sum_{r=1}^H \delta_{\theta_r}.
\label{eq:multihead}
\end{equation}
As $H\to\infty$ (Theorem~\ref{thm:head-lln} below), $\rho^H\to\rho$ and we obtain the mean-field-parameter velocity field
\begin{equation}
V_{\rho,\mu}(x) := \int_\Theta A(x,\mu,\theta)\,\rho(d\theta),
\label{eq:V}
\end{equation}
and, coupling with the evolution of $\mu_t=\Law(x_t)$, the forward equation,
\begin{equation}
\partial_t\mu + \nabla_x\cdot\Big(\mu\int_\Theta A(x,\mu,\theta)\,\rho_s(d\theta)\Big) = 0,
\label{eq:forward-coupled}
\end{equation}
a non-linear, non-local McKean--Vlasov equation in which the parameter measure $\rho_s$ enters as a (training-time-dependent, depth-independent) external field.

\subsection{Training dynamics}
Given the terminal loss functional $\mathcal L(\rho) := \mathcal J(\mu(T,\cdot,s))$, the particle-level gradient descent dynamics  
$$
\theta'_r:=  \frac{d}{ds} \theta_r = -\nabla_{\theta_r}\mathcal L
$$
formally converges, as the number of heads  $H\to\infty$,  to the Wasserstein gradient flow
\begin{equation}
\partial_s\rho - \nabla_\theta\cdot \left(\rho  \nabla_\theta\frac{\delta\mathcal L}{\delta\rho}(\theta,\rho) \right) = 0, 
%\qquad u(\theta,s) = -\nabla_\theta\frac{\delta\mathcal L}{\delta\rho}(\theta,\rho_s),
\label{eq:training-transport}
\end{equation}
If the isotropic parameter noise of intensity $\beta\ge 0$ is added to the particle dynamics,  the limiting equation becomes the mean-field Langevin equation 
\begin{equation}
\partial_s\rho = \nabla_\theta\cdot\Big(\rho\,\nabla_\theta\frac{\delta\mathcal L}{\delta\rho}\Big) + \beta\Delta_\theta\rho.
\label{eq:mfld}
\end{equation}
Equation \eqref{eq:mfld} admits an equivalent variational interpretation. Define the entropy-regularized functional
%is the \emph{mean-field Langevin} equation: it is the Wasserstein gradient flow of the entropy-regularized functional
\begin{equation}
\mathcal F_\beta(\rho) := \mathcal L(\rho) + \beta\,\Ent(\rho) , \qquad \Ent(\rho):=\int \rho\log\rho\, d\theta.
\label{eq:Fbeta}
\end{equation}
since $\tfrac{\delta \Ent}{\delta \rho}=\log\rho+1$ and $\nabla_\theta(\rho \nabla_\theta \log\rho)= \Delta_\theta\rho$, equation \eqref{eq:mfld}
can be written as 
$$
\partial_s\rho = \nabla_\theta\cdot\big(\rho\,\nabla_\theta \tfrac{\delta\mathcal F_\beta}{\delta\rho}\big).
$$ 
Thus, the mean-field Langevin dynamics are precisely the Wasserstein gradient flow of the entropy-regularized loss  $ \mathcal F_\beta$. 
This variational structure  is what links the well-posedness theory of \eqref{eq:mfld} to the global convergence result established in Section~\ref{sec:convergence}.

\begin{remark}[Reincorporating the feed-forward block]
\label{rem:ffn}
\iffalse 
All results below are stated for the attention-only velocity field $V_{\rho,\mu}$ for notational simplicity. If $F$ (Eq.~\eqref{eq:ffn}) is added back, with $W_1,W_2,b_1,b_2$ either fixed or included as part of an enlarged $\theta$, all well-posedness results (Theorems~\ref{thm:token-wp}, \ref{thm:poc-tokens}, \ref{thm:head-lln}, \ref{thm:mfld-wp}) go through unchanged under the additional standing assumption that $\sigma$ is Lipschitz, because $F$ is then Lipschitz and \emph{local} (it does not depend on $\mu$), so it contributes only an additive Lipschitz perturbation to the velocity field $v(x,\mu)$ in \eqref{eq:token-pde}, to which the fixed-point and Gr\"onwall arguments of Section~\ref{sec:token-mf} apply verbatim. As already observed in the preliminary study, $F$ need not be a gradient field ($\nabla\times F\ne0$ in general), so \eqref{eq:token-pde} is not, in general, a Wasserstein gradient flow in $x$; this obstruction is orthogonal to the well-posedness question and only affects whether one can use gradient-flow-specific (e.g.\ energy-dissipation) techniques for the \emph{forward} dynamics -- we do not need such techniques, since Section~\ref{sec:token-mf} establishes well-posedness by a direct fixed-point argument that does not require gradient-flow structure. \fi 

For notational simplicity, all results below are stated for the attention-only
velocity field $V_{\rho,\mu}$. If the feed-forward contribution $F$ in
\eqref{eq:ffn} is included, with $W_1,W_2,b_1,b_2$ either fixed or incorporated
into an enlarged parameter variable $\theta$, the well-posedness results
(Theorems~\ref{thm:token-wp}, \ref{thm:poc-tokens}, \ref{thm:head-lln},
and \ref{thm:mfld-wp}) remain unchanged. Indeed, under the additional
assumption that $\sigma$ is Lipschitz, $F$ is a bounded Lipschitz perturbation
local in the state variable and independent of $\mu$. Hence, only
an extra Lipschitz term is added  to the velocity field in \eqref{eq:token-pde},
and all fixed-point and Gr\"onwall arguments in Section~\ref{sec:token-mf}
apply without modification.
\end{remark} 

\begin{remark}[Availability of gradient flow techniques]
Note that $F$ is not required to be a gradient field and, in general,
$\nabla\times F\neq0$. Therefore, even with the attention and feed-forward
components combined, the forward dynamics \eqref{eq:token-pde} do not generally
form a Wasserstein gradient flow in $x$. This affects only the availability of
gradient-flow-specific tools, such as energy-dissipation estimates, and does
not impact the well-posedness theory developed, which relies solely on
Lipschitz stability and fixed-point arguments. 
\end{remark}

\section{Assumptions and A Priori Estimates}
\label{sec:assumptions}

We collect the structural assumptions used throughout. We write $\Prob_2(E)$ for probability measures on $E$ with finite second moment, metrized by the $2$-Wasserstein distance $\Wass_2$.

\begin{assumption}[Compact parameter domain for the forward pass]
\label{ass:compact-theta}
$\Theta\subset\R^p$ is compact, with $\|\theta\|\le D_\Theta$ for all $\theta=(Q,K,V)\in\Theta$ (equivalently, we work with any fixed, or randomly initialized but almost-surely bounded, family of heads. 
%this is automatic under the usual compactly-supported or sub-Gaussian initializations).
\end{assumption}

\begin{assumption}[Regularity of the feed-forward map]
\label{ass:ffn}
$\sigma:\R\to\R$ is $L_\sigma$-Lipschitz, applied entrywise; $W_1,W_2,b_1,b_2$ have norms bounded by $D_\Theta$.
\end{assumption}

\begin{assumption}[Regularity of the loss]
\label{ass:loss}
$\ell(\cdot,y)$ is $L_\ell$-Lipschitz and $C^1$ with $L_\ell$-Lipschitz gradient, uniformly in $y$; the functional $\mathcal J:\Prob_2(\R^d)\to\R$ has a bounded, Lipschitz (in $\Wass_2$) first variation $\delta\mathcal J/\delta\mu$, i.e.\ $\big|\tfrac{\delta\mathcal J}{\delta\mu}(\mu)(x)-\tfrac{\delta\mathcal J}{\delta\mu}(\mu')(x')\big|\le L_{\mathcal J}\big(\Wass_2(\mu,\mu')+|x-x'|\big)$.
\end{assumption}

\begin{lemma}[Boundedness and Lipschitz continuity of the attention kernel]
\label{lem:kernel-lip}
Let Assumption~\ref{ass:compact-theta} hold and let $\mu,\mu'\in\Prob_2(\R^d)$ be supported in a ball $B_R:=\{|x|\le R\}$. Then for every $\theta\in\Theta$:
\begin{enumerate}[label=(\roman*)]
\item $|A(x,\mu,\theta)| \le D_\Theta\, R$ for all $x\in B_R$; $\mu \in \Prob_2(\R^d)$;
\item there is $C_1=C_1(D_\Theta,R)$ such that $x\mapsto A(x,\mu,\theta)$ is $C_1$-Lipschitz on $B_R$, uniformly in $\mu$ supported in $B_R$;
\item there is $C_2=C_2(D_\Theta,R)$ such that $\big|A(x,\mu,\theta)-A(x,\mu',\theta)\big|\le C_2\,\Wass_1(\mu,\mu')$ for all $x\in B_R$.
\end{enumerate}
\end{lemma}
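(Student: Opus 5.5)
We write $A(x,\mu,\theta)=\int K_\mu(x,y)\,Vy\,d\mu(y)$ with $K_\mu$ from \eqref{eq:kernel}, and set $g_\theta(x,y):=\exp(\langle Qx,Ky\rangle/\sqrt d)$, so that $K_\mu(x,y)=g_\theta(x,y)/Z_\mu(x)$ with $Z_\mu(x)=\int g_\theta(x,y')\,d\mu(y')$. The plan is to use that, for $x,y\in B_R$ and $\|\theta\|\le D_\Theta$, the exponent is bounded by $D_\Theta^2R^2/\sqrt d$. Hence $g_\theta$ takes values in $[e^{-c},e^{c}]$ with $c=D_\Theta^2R^2/\sqrt d$, and $Z_\mu(x)\ge e^{-c}$ for any $\mu$ supported in $B_R$. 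Moreover, $g_\theta$ is smooth on $B_R\times B_R$ with $|\nabla_x g_\theta|,|\nabla_y g_\theta|\le e^{c}D_\Theta^2R/\sqrt d$. All constants $C_1,C_2$ will be built from these quantities only.

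For (i), $K_\mu(x,\cdot)\mu$ is a probability measure, so $A(x,\mu,\theta)$ is a convex combination of the vectors $Vy$ with $y\in B_R$. This gives $|A|\le\|V\|R\le D_\Theta R$ immediately, for $\mu$ supported in $B_R$; the statement's phrase ``$\mu\in\Prob_2$'' should be read with that support restriction.

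For (ii), write $A=N_\mu(x)/Z_\mu(x)$ with $N_\mu(x)=\int g_\theta(x,y)Vy\,d\mu(y)$ and differentiate in $x$ under the integral. This yields
\[
\nabla_x A=\int K_\mu(x,y)\,Vy\otimes \nabla_x\log g_\theta(x,y)\,d\mu(y)-A\otimes\int K_\mu(x,y)\nabla_x\log g_\theta(x,y)\,d\mu(y),
\]
a covariance-type expression. Since $\nabla_x\log g_\theta=Q^\top Ky/\sqrt d$ is bounded by $D_\Theta^2R/\sqrt d$, we get $|\nabla_xA|\le 2D_\Theta^3R^2/\sqrt d=:C_1$. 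This is uniform in $\mu$ and holds without even using the lower bound on $Z_\mu$. The ball $B_R$ is convex, so the gradient bound yields Lipschitz continuity on $B_R$.

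For (iii), I would use the decomposition
\[
A(x,\mu)-A(x,\mu')=\frac{N_\mu-N_{\mu'}}{Z_\mu}+N_{\mu'}\Big(\frac1{Z_\mu}-\frac1{Z_{\mu'}}\Big).
\]
Then $|N_\mu-N_{\mu'}|\le \mathrm{Lip}_y(g_\theta(x,\cdot)V\cdot)\,\Wass_1(\mu,\mu')$ by Kantorovich--Rubinstein duality. This is the one subtle point, since duality needs a globally Lipschitz test function while we only control it on $B_R$. I would handle it by composing with the radial projection onto $B_R$, which is $1$-Lipschitz and changes nothing on the supports. The Lipschitz constant of $y\mapsto g_\theta(x,y)Vy$ on $B_R$ is at most $e^{c}D_\Theta(1+D_\Theta^2R^2/\sqrt d)$. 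Similarly, $|Z_\mu-Z_{\mu'}|\le e^{c}D_\Theta^2R/\sqrt d\,\Wass_1$. Combining these with $Z\ge e^{-c}$ and $|N_{\mu'}|\le e^{c}D_\Theta R$ gives $C_2=C(D_\Theta,R)e^{3c}$, exponential in $D_\Theta^2R^2$. The main obstacle is just this bookkeeping of the exponential constants and the projection trick. Alternatively, one could bound the difference via the covariance form, as in (ii), to obtain $e^{2c}$, but this is not needed.
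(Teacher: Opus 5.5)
Your proposal is correct and follows essentially the paper's argument. Part (i) uses that $K_\mu(x,\cdot)\,\mu$ is a probability measure, (ii) differentiates the normalized exponential into a covariance-type expression with $C_1=2D_\Theta^3R^2/\sqrt d$, and (iii) uses the same split $\int K_\mu Vy\,d(\mu-\mu')+\int (K_\mu-K_{\mu'})Vy\,d\mu'$, which is exactly your $N_\mu/Z_\mu$ decomposition, with Kantorovich--Rubinstein duality for the first piece and $\Wass_1$-Lipschitz stability of $Z_\mu$ for the second. Your radial-projection step merely makes the duality step rigorous.

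There is one harmless slip in the constant for (iii). With the bounds you list, the second term gives $e^{4c}$ rather than $e^{3c}$. If you instead use $|N_{\mu'}|/Z_{\mu'}=|A(x,\mu',\theta)|\le D_\Theta R$, you get only $e^{2c}$. Either way the statement is unaffected.
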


\begin{proof}
(i) is immediate since $K_\mu(x,\cdot)$ is a probability density on $y$ and $|Vy|\le D_\Theta R$ for $|y|\le R$, and $\|V\|\le D_\Theta$.

(ii) Write $A(x,\mu,\theta)=\int K_\mu(x,y)Vy\,d\mu(y)$. Set $r(x,y):=\tfrac1{\sqrt d}\langle Qx,Ky\rangle$. For $x,y\in B_R$, the parameter bounds imply 
$$
|r(x,y)|\le  D_\Theta^2R^2/\sqrt d =: M, \quad 
|\nabla_xr(x,y)|\le D_\Theta^2 R/\sqrt d. 
$$
Since $K_\mu(x,y)$  is the softmax weight associated with $r(x,y)$ relative to $\mu$, differentiation of the normalized exponential gives
%Since $K_\mu(x,y)=\mathrm{softmax}$ of $(\ell(x,y'))_{y'}$ against $\mu$, a standard computation for the softmax yields
$$
\nabla_x K_\mu(x,y) = K_\mu(x,y)\Big(\nabla_x r(x,y) - \int K_\mu(x,y')\nabla_x r(x,y')\,d\mu(y')\Big),
$$
so 
$$
|\nabla_x K_\mu(x,y)|\le 2 \sup_{y' \in B_R} |\nabla_x r(x, y')| \leq 2D_\Theta^2R /\sqrt d.
$$ 
Hence 
$$
|\nabla_x A(x,\mu,\theta)| \le \int |\nabla_xK_\mu(x,y)|\,|Vy|\,d\mu(y) \le 2D_\Theta^3R^2 /\sqrt d =: C_1,
$$
 and (ii) follows from the mean value inequality.

(iii) is the standard Lipschitz-stability estimate for Gibbs/softmax kernels with bounded, Lipschitz potential $r$: since $y\mapsto Vy$ and $y\mapsto r(x,y)$ are Lipschitz on $B_R$ with constants bounded by $D_\Theta$ and $D_\Theta^2R/\sqrt d$ respectively, and $e^{r}$ is bounded above and below by $e^{\pm M}$ on $B_R$, a first-order expansion of $\mu\mapsto\int K_\mu(x,y)Vy\,d\mu(y)$ around $\mu'$ gives 
$$
|A(x, \mu, \theta)-A(x, \mu', \theta)| \leq 
\left| \int K_\mu Vy d(\mu-\mu') \right| +  \left|\int(K_\mu -K_{\mu'})Vyd\mu' \right|. 
$$
The first term via the Kantorovich-Rubinstein duality when applied to the Lipschitz test function $y\mapsto K_\mu(x,y)Vy$ is bounded by 
$W_1(\mu, \mu')$ against a constant $ e^{2M}D_\Theta (1+R^2D_\Theta^2/\sqrt{d})$.  The second term can be estimated by application of the implicit-function-type stability of the normalizing partition function $Z_\mu(x)=\int e^{r(x,y')}d\mu(y')$, which is itself Lipschitz in $\Wass_1(\mu,\mu')$ with constant bounded by $e^{2M}D_\Theta^2R/\sqrt d$. These together yield the stated bound with $C_2=C_2(D_\Theta,R,d)$ explicit. 
\end{proof}

\begin{lemma}[A priori confinement of the token dynamics]
\label{lem:apriori}
Let Assumptions~\ref{ass:compact-theta}--\ref{ass:ffn} hold, fix a parameter distribution $\rho\in\Prob(\Theta)$, and let $\mu_t$ solve \eqref{eq:forward-coupled}  on $[0,T]$ with $\rho_s\equiv\rho$ frozen. If the initial data distribution  is supported with  $\Supp(\mu_0)\subset B_{R_0}$, then there exists  a constant $C=C(D_\Theta,L_\sigma)$ such that 
$$
R(T)=R_0 e^{CT} + (e^{CT}-1),
$$
 %with $C=C(D_\Theta,L_\sigma)$, 
and $\Supp(\mu_t)\subset B_{R(T)}$ for all $t\in[0,T]$.
\end{lemma}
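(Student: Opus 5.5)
We argue along characteristics: since $\mu_t$ is the push-forward of $\mu_0$ under the flow of the velocity field $v(x,\mu_t)=V_{\rho,\mu_t}(x)+F(x)$, it is enough to bound $|X_t(x_0)|$ for every characteristic $X_t$ started at $x_0\in B_{R_0}$. The key observation is that attention has \emph{linear} growth in the support radius, with no superlinear dependence. Indeed, Lemma~\ref{lem:kernel-lip}(i) shows that if $\Supp(\mu_t)\subset B_{r(t)}$, then $K_{\mu_t}(x,\cdot)$ is a probability density with respect to $\mu_t$. This holds for \emph{every} $x$, not just $x\in B_r$, because positivity and normalization do not use $|x|\le r$. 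Hence
\[
|A(x,\mu_t,\theta)|\le \|V\|\sup_{y\in\Supp\mu_t}|y|\le D_\Theta\, r(t),
\]
and integrating against $\rho$ gives $|V_{\rho,\mu_t}(x)|\le D_\Theta r(t)$. For the feed-forward part, Assumption~\ref{ass:ffn} gives
\[
|F(x)|\le D_\Theta\big(|\sigma(0)|+L_\sigma(D_\Theta|x|+D_\Theta)\big)+D_\Theta\le C(1+|x|),
\]
with $C=C(D_\Theta,L_\sigma)$. We absorb $|\sigma(0)|$ into the constant, or normalize $\sigma(0)=0$ as is standard.

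Set $r(t):=\sup_{x_0\in B_{R_0}}|X_t(x_0)|$, which is the support radius of $\mu_t$. Along any characteristic we have
\[
\frac{d}{dt}|X_t|\le |v(X_t,\mu_t)|\le D_\Theta r(t)+C(1+|X_t|).
\]
Integrating and taking the supremum over $x_0$ gives
\[
r(t)\le R_0+\int_0^t\big(C'r(\tau)+C\big)\,d\tau,\qquad C':=D_\Theta+C.
\]
Gronwall's inequality then yields $r(t)\le R_0e^{C't}+(e^{C't}-1)$ once $C$ is enlarged so that the inhomogeneous term matches, i.e.\ we use $C'$ in both places, since $C\le C'$. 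Monotonicity in $t$ gives $r(t)\le R(T)$ for all $t\le T$.

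The main obstacle is circularity. The bound on $V$ uses the support of $\mu_t$, which is exactly what we are trying to control, and the absolute value $|X_t|$ is not differentiable at $0$. The second issue is handled by working with $\sqrt{1+|X_t|^2}$ or with the integral form $|X_t|\le|x_0|+\int_0^t|v|$. For the first, we work directly with the integral inequality for $r(t)$, which is legitimate as long as $r(t)$ is finite and measurable. This holds on any interval where the solution exists with locally bounded support, which is part of the notion of solution assumed, since $\mu_t$ is given as a solution with compactly supported data and a locally Lipschitz field. If finiteness of $r$ is not a priori available, we instead run a continuity argument: let $t^*$ be the first time at which $r(t)$ reaches $R(T)+1$. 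On $[0,t^*]$ the Gronwall bound gives $r\le R(T)$, a contradiction, so $t^*>T$.
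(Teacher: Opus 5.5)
Your proposal is correct and follows the same route as the paper: you bound the velocity along characteristics, using the fact that the softmax kernel is a probability density (Lemma~\ref{lem:kernel-lip}(i)) together with the linear growth of $F$, and then apply Gr\"onwall to the support radius. Your version is cleaner on two points: the paper's bound carries a spurious factor $|x(t)|$ in front of $D_\Theta\sup_{y\in\Supp(\mu_t)}|y|$, which would make the inequality quadratic in $R$, whereas you correctly use $|V_{\rho,\mu_t}(x)|\le D_\Theta r(t)$; and you work with the integral inequality rather than differentiating the possibly non-smooth radius $R(t)$.
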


\begin{proof}
Along a characteristic $x(t)$ of \eqref{eq:forward-coupled}, 
$$
\tfrac{d}{dt}|x(t)|\le |V_{\rho,\mu_t}(x(t))| + |F(x(t))| \le D_\Theta |x(t)| \sup_{y\in\Supp(\mu_t)}|y| + L_\sigma\|W_1\|\|W_2\|(|x(t)|+1)+|b_2|
$$
 by Lemma~\ref{lem:kernel-lip}(i) applied pointwise in $\theta$ and integrated against $\rho$, and by the Lipschitz/linear growth of $F$ under Assumption~\ref{ass:ffn}. Writing $R(t):=\sup_{x \in \Supp(\mu_t)}|x|$ this gives $\dot R(t)\le C(1+R(t))$, and Gr\"onwall's inequality yields the stated bound. Since this holds for every characteristic starting in $\Supp(\mu_0)$, we conclude that $\Supp(\mu_t)\subset B_{R(T)}$ for all $t\in[0,T]$.  
\end{proof}
\noindent{\bf Remark.} Lemma~\ref{lem:apriori} is used throughout to reduce all subsequent estimates to the compact ball $B_{R(T)}$, on which Lemma~\ref{lem:kernel-lip} supplies uniform Lipschitz constants.

\section{Well-Posedness of the Token (Data) Mean-Field Dynamics}
\label{sec:token-mf}

In this section,  we give an affirmative and quantitative answer  to the  questions posed in the preliminary study concerning the  existence and uniqueness of  solutions to \eqref{eq:forward-coupled} for a fixed parameter measure,  as well as the  continuity and stability of the associated solution map.

\begin{theorem}[Existence, uniqueness, and stability]
\label{thm:token-wp}
Let Assumptions~\ref{ass:compact-theta}--\ref{ass:ffn} hold, fix $\rho\in\Prob(\Theta)$ and let $\mu_0\in\Prob_2(\R^d)$ be compactly supported. Then equation \eqref{eq:forward-coupled}, with $\rho_s\equiv\rho$ frozen,  admits a unique weak solution $\mu\in C([0,T];\Prob_2(\R^d))$, in the sense that for every $\varphi\in C_c^\infty([0,T]\times\R^d)$, it holds
\begin{equation}
\int\varphi(T,x)\,d\mu_T(x) - \int\varphi(0,x)\,d\mu_0(x) = \int_0^T\!\!\int\Big(\partial_t\varphi + \nabla_x \varphi\cdot V_{\rho,\mu_t}(x) + \nabla_x \varphi \cdot F(x)\Big)d\mu_t(x)\,dt.
\end{equation}
Moreover, if $\mu,\mu'$ are the solutions associated to initial data $\mu_0,\mu_0'$, respectively, with both initial measures compactly supported in $B_{R_0}$) and with the same fixed parameter measure $\rho$, then
\begin{equation}
\sup_{t\in[0,T]}\Wass_2(\mu_t,\mu_t')\le e^{CT}\,\Wass_2(\mu_0,\mu_0')
\label{eq:stability}
\end{equation}
where constant  $C=C(D_\Theta,L_\sigma,R(T))$ and $R(T)$ is the confinement radius  from  Lemma~\ref{lem:apriori}.
\end{theorem}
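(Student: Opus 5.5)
The plan is to prove existence and uniqueness by a Picard-type fixed point on the space of measure-valued curves supported in a fixed ball, in the spirit of Dobrushin's argument for McKean--Vlasov equations. Stability then follows from a synchronous-coupling Grönwall estimate.

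\emph{Setup.} Fix $R:=R(T)$ from Lemma~\ref{lem:apriori}, computed with a slightly enlarged radius. Let $\mathcal X$ be the set of curves $\nu\in C([0,T];\Prob_2(\R^d))$ with $\Supp(\nu_t)\subset B_R$ for all $t$. Equip $\mathcal X$ with the metric $d_\gamma(\nu,\nu'):=\sup_t e^{-\gamma t}\Wass_1(\nu_t,\nu_t')$; this space is complete, since a closed support constraint is preserved under weak limits. Given $\nu\in\mathcal X$, consider the non-autonomous field
\[
w_\nu(t,x):=V_{\rho,\nu_t}(x)+F(x).
\]
By Lemma~\ref{lem:kernel-lip}(ii), integrated in $\theta$ against $\rho$, and by Assumption~\ref{ass:ffn}, the field $w_\nu$ is Lipschitz in $x$ on $B_R$, uniformly in $t$. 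It is also measurable in $t$, because $t\mapsto\nu_t$ is continuous and part (iii) applies.

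I will extend $w_\nu$ outside $B_R$ by composing with the radial projection onto $B_R$. This makes the field globally Lipschitz, so its flow $X^\nu_t$ exists globally. Set $\Phi(\nu)_t:=(X^\nu_t)_\#\mu_0$. The growth bound in the proof of Lemma~\ref{lem:apriori} uses only $|A|\le D_\Theta R$ on the relevant support, so trajectories starting in $B_{R_0}$ stay in $B_R$, and hence $\Phi(\mathcal X)\subset\mathcal X$. Making this consistent is the main obstacle. The sup-norm bound in Lemma~\ref{lem:kernel-lip}(i) depends on the support radius of $\nu$, so the invariant radius must be chosen self-consistently, e.g.\ via the scalar ODE $\dot R=C(1+R)$ for the bound $\sup_{x\in\Supp}|x|$. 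If the quadratic term $D_\Theta|x|R$ causes trouble, I would restrict to a short time interval and iterate, using the a priori bound to prevent blow-up.

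\emph{Contraction.} For $\nu,\nu'\in\mathcal X$ and $x_0\in B_{R_0}$, Lemma~\ref{lem:kernel-lip}(ii),(iii) give
\[
\frac{d}{dt}|X^\nu_t-X^{\nu'}_t|\le (C_1+L_F)|X^\nu_t-X^{\nu'}_t|+C_2\Wass_1(\nu_t,\nu'_t).
\]
By Grönwall, $\sup_{x_0}|X^\nu_t-X^{\nu'}_t|\le C_2e^{Ct}\int_0^t\Wass_1(\nu_r,\nu'_r)\,dr$. Since $\Wass_1(\Phi(\nu)_t,\Phi(\nu')_t)$ is bounded by this sup, choosing $\gamma$ large gives $d_\gamma(\Phi\nu,\Phi\nu')\le\tfrac12 d_\gamma(\nu,\nu')$. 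Banach's theorem then yields a unique fixed point. Because the flow is Lipschitz in time, the fixed point is a continuous curve in $\Wass_2$, and the weak formulation follows by differentiating $\int\varphi(t,X_t)\,d\mu_0$.

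\emph{Uniqueness.} Any weak solution is compactly supported by Lemma~\ref{lem:apriori}, so it lies in $\mathcal X$. Its field $w_\mu$ is Lipschitz, and uniqueness for linear continuity equations with Lipschitz fields (the superposition principle, or a direct duality argument) identifies it with $(X^\mu_t)_\#\mu_0$. It is therefore a fixed point of $\Phi$, and hence equals the solution constructed above.

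\emph{Stability.} Take an optimal $\Wass_2$ coupling $\pi$ of $\mu_0,\mu_0'$ and push it forward under $(X^\mu_t,X^{\mu'}_t)$; this is a coupling of $\mu_t,\mu_t'$. Write $D(t):=\int|X^\mu_t(x)-X^{\mu'}_t(y)|^2\,d\pi$. Lemma~\ref{lem:kernel-lip}(ii),(iii) together with $\Wass_1\le\Wass_2\le\sqrt{D}$ give $\dot D\le 2(C_1+L_F+C_2)D$. Grönwall then yields \eqref{eq:stability} with $C=C_1+L_F+C_2$, which depends only on $D_\Theta,L_\sigma,R(T)$.
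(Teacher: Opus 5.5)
Your proposal follows the paper's proof essentially step for step: a Banach fixed point for $\nu\mapsto(\Phi^\nu_{0,t})_\#\mu_0$ in an exponentially weighted sup-Wasserstein metric built on Lemma~\ref{lem:kernel-lip}(ii)--(iii), the superposition principle for uniqueness, and a synchronous-coupling Gr\"onwall argument for \eqref{eq:stability}. Using $\Wass_1$ instead of $\Wass_2$ in the contraction is immaterial on a bounded ball, and your differential inequality for $D(t)$ is a cleaner version of the paper's stability step.

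Commit to the time-dependent radius $R(t)$ with $\dot R=C(1+R)$ rather than a fixed ball. Since $|A(x,\nu,\theta)|\le D_\Theta\sup_{y\in\Supp\nu}|y|$ uniformly in $x$, there is no quadratic term and no need for short-time iteration. The closed set of curves with $\Supp\nu_t\subset B_{R(t)}$ is then mapped into itself. This also settles an invariance point the paper passes over: it works in $C([0,T];\Prob_2(B_{R(T)}))$ without checking that the fixed-point map preserves this space, and that can fail when $D_\Theta T$ is not small.
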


\begin{proof}
By Lemma~\ref{lem:apriori}, solutions with initial data in $B_{R_0}$ remain, for all time, in the fixed compact ball $B_{R(T)}$; we may therefore work throughout in the complete metric space 
$$
\mathcal M_T := C([0,T];\Prob_2(B_{R(T)}))
$$
 equipped with $d_T(\mu,\mu') := \sup_{t\in[0,T]}\Wass_2(\mu_t,\mu_t')$.

\emph{Existence and uniqueness.} By the standard superposition principle (\cite[Theorem~8.2.1]{ambrosio2008gradient}), weak solutions of the linear (given the velocity field $b(t,x):=V_{\rho,\mu_t}(x)+F(x)$) continuity equation $\partial_t\nu+\nabla_x \cdot(b\nu)=0$ correspond  to the law at time $t$ of the ODE flow $\dot x=b(t,x)$, $x_0 \sim\mu_0$. It therefore suffices to construct $\mu\in\mathcal M_T$ as a fixed point of the map
\[
\Psi:\mathcal M_T\to\mathcal M_T, \qquad \Psi(\nu)_t := \big(\Phi^{\nu}_{0,t}\big)_\#\mu_0,
\]
where $\Phi^{\nu}_{0,t}$ is the flow of the  ODE 
$$\dot x = V_{\rho,\nu_t}(x)+F(x), \; x(0)=x_0.$$  By Lemma~\ref{lem:kernel-lip}(ii)--(iii) and Assumption~\ref{ass:ffn},  the vector field associated with any $\nu_t\in \mathcal M_T$ is uniformly Lipschitz in $x$,  with Lipschitz constant bounded by 
$$
L_1:=  C_1+L_\sigma\|W_1\|\|W_2\|
$$
% for $\nu\in\mathcal M_T$. 
Further, for  fixed $x\in B_{R(T)}$, the vector field is Lipschitz in $\nu_t$ with respect to  $\Wass_1$, and hence also with respect to  $\Wass_2$ since   $\Wass_1\le \Wass_2$. Specifically, 
$$
|V_{\rho, \nu_t}(x) -V_{\rho, \nu'_t}(x)|\leq C_2 \Wass_2(\nu_t, \nu'_t),
$$
where $C_2$  is obtained by integrating the estimate in Lemma~\ref{lem:kernel-lip}(iii) against $\rho$. 

By the classical Cauchy--Lipschitz theorem the flow $\Phi^\nu_{0,t}$ is well defined, and for  two inputs $\nu,\nu'\in\mathcal M_T$ a Gr\"onwall argument on the difference of characteristics gives, for the same initial datum $x_0$,
\begin{equation}
|\Phi^\nu_{0,t}(x_0)-\Phi^{\nu'}_{0,t}(x_0)| \le C_2 \int_0^t e^{(L_1(t-r)}\,\Wass_2(\nu_r,\nu'_r)\,dr.
\end{equation}
This estimate provides the stability of the characteristic flow with respect to the measure-valued input. 

Hence, taking the pushforward under an optimal coupling of $\mu_0$,
\begin{equation}
\Wass_2\big(\Psi(\nu)_t,\Psi(\nu')_t\big) \le C_2 \int_0^t e^{L_1(t-r)} \,d_T(\nu,\nu')\,dr \le \frac{C_2}{L_1}\big(e^{L_1T}-1\big)\,d_T(\nu,\nu').
\end{equation}
%where $L_1:=C_1+L_\sigma\|W_1\|\|W_2\|$. 
This bound does not guarantee  a contraction on $[0,T]$; we instead use the standard approach of an equivalent, exponentially weighted metric 
$$
d_T^\kappa(\nu,\nu') := \sup_{t\in[0,T]} e^{-\kappa t}\Wass_2(\nu_t,\nu_t'),
$$ 
under which the same Gr\"onwall estimate gives
\[
e^{-\kappa t}\Wass_2(\Psi(\nu)_t,\Psi(\nu')_t) \le C_2\int_0^t e^{L_1(t-r)-\kappa t}\,e^{\kappa r}\,d_T^\kappa(\nu,\nu')\,dr \le \frac{C_2}{\kappa-L_1}\,d_T^\kappa(\nu,\nu')
\]
for $\kappa>L_1$. Choosing $\kappa$ large enough that $C_2/(\kappa-L_1)<1$ yields $\Psi$ to be a strict contraction on $(\mathcal M_T,d_T^\kappa)$. The space $(\mathcal M_T,d_T^\kappa)$  is complete since $d_T^\kappa$ is equivalent to $d_T$. Banach's fixed point theorem yields a unique fixed point $\mu\in\mathcal M_T$, which by construction is the unique weak solution.

\emph{Stability.} The bound \eqref{eq:stability} follows from the same Gr\"onwall estimate applied directly to the  characteristic flows $\Phi^\mu_{0,t}$, $\Phi^{\mu'}_{0,t}$ generated by the two fixed-point equations. Let $(X_0, X_0')$ 
be an optimal coupling of the initial measures $\mu_0$ and $\mu_0'$, so that
%starting  from the optimal coupling $\mu_0,\mu_0'$ at time  $t=0,$ respectively. Hence, we obtain 
$$
\Wass_2^2(\mu_0,\mu_0') = \mathbb E|X_0-X_0'|^2.
$$
The push forward of the particles and the definition  $\Wass_2^2(\mu_t,\mu_t')=\inf_\pi \int|x-y|^2d\pi$ yields
\[
\Wass_2(\mu_t,\mu_t')\le \mathbb E|X_t-X_t'|^2. 
%\mathbb E|\Phi^\mu_{0,t}(X_0)-\Phi^{\mu'}_{0,t}(X_0')| = \mathbb E|X_0-X_0'|^2.
\]
A Gr\"onwall argument for estimating particle difference $|X_t-X_t'|$ using the Lipschitz bounds above gives 
$$
|X_t-X_t'|\leq e^{L_1 t}|X_0-X_0'| + C_2\int_0^t e^{L_1(t-\tau)} \Wass_2(\mu_\tau,\mu'_\tau)d\tau.
$$
Taking the $L^2$-norm and then using the definition of $\Wass_2$ gives a closed Grönwall inequality for $\Wass_2(\mu_t, \mu'_t)$, from which  \eqref{eq:stability} follows.
%Using Lemma~\ref{lem:kernel-lip}(iii))  to account for  the $\Wass_2(\mu_r,\mu_r')$-dependence of the two vector fields gives \eqref{eq:stability}. 
\end{proof}

\begin{theorem}[Quantitative propagation of chaos]
\label{thm:poc-tokens}
Under the hypotheses of Theorem~\ref{thm:token-wp}, let $(x_1^N(t),\dots,x_N^N(t))$ solve the $N$-particle system \eqref{eq:node}/\eqref{eq:multihead} started from i.i.d.\ initial data $x_i^N(0)\sim\mu_0$, and let $\mu_t^N=\tfrac1N\sum_i\delta_{x_i^N(t)}$ be the empirical measure. Then, there is a coupling of $(x_i^N)_{i\le N}$ to i.i.d.\ copies $(\bar x_i)_{i\le N}$ of the McKean--Vlasov solution of Theorem~\ref{thm:token-wp} such that
\begin{equation}
\sup_{t\le T}\ \mathbb E\Big[\max_{i\le N}|x_i^N(t)-\bar x_i(t)|^2\Big]^{1/2} \le C(T)\,\epsilon_N, 
%\mathbb E\big[\Wass_2(\mu_0^N,\mu_0)^2\big]^{1/2},
\end{equation}
and, %using the Fournier--Guillin empirical-measure convergence rate \cite{fournier2015rate},
% for $d\ge3$ (with analogous, log-corrected rates for $d\in\{1,2\}$),
\begin{equation}
\mathbb E\big[\Wass_2(\mu_t^N,\mu_t)\big] \le C(T)\,\epsilon_N , \qquad t\in[0,T],
\label{eq:poc-rate}
\end{equation}
where 
\[
\epsilon_N =
\begin{cases}
N^{-1/4}, & d<4,\\[2mm]
N^{-1/4} \sqrt{\log N}, & d=4,\\[2mm]
N^{-1/d}, & d>4.
\end{cases}
\]
\end{theorem}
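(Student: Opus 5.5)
We will use the classical synchronous-coupling argument of Sznitman, with the McKean--Vlasov flow of Theorem~\ref{thm:token-wp} as the reference dynamics.

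\textbf{Step 1: coupling.} Take i.i.d.\ initial data $\xi_i\sim\mu_0$ and set $x_i^N(0)=\bar x_i(0)=\xi_i$. Let $x_i^N$ evolve by the $N$-particle system $\dot x_i^N=V_{\rho,\mu^N_t}(x_i^N)+F(x_i^N)$. Let $\bar x_i(t)=\Phi^{\mu}_{0,t}(\xi_i)$ be the characteristics of the limiting field $V_{\rho,\mu_t}+F$. Then the $\bar x_i$ are i.i.d.\ with law $\mu_t$. Set $\bar\mu^N_t:=\frac1N\sum_i\delta_{\bar x_i(t)}$.

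\textbf{Step 2: support.} The proof of Lemma~\ref{lem:apriori} applies verbatim to empirical measures, since it only uses characteristics. Hence all $x_i^N(t)$ and $\bar x_i(t)$ stay in $B_{R(T)}$, almost surely and uniformly in $N$. So Lemma~\ref{lem:kernel-lip}(ii)--(iii), integrated against $\rho$, gives uniform constants $L_1$ and $C_2$.

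\textbf{Step 3: Gr\"onwall estimate.} Set $\delta(t):=\max_i|x_i^N(t)-\bar x_i(t)|$. For each $i$,
\[
\tfrac{d}{dt}|x_i^N-\bar x_i|\le L_1|x_i^N-\bar x_i| + C_2\big(\Wass_1(\mu_t^N,\bar\mu_t^N)+\Wass_1(\bar\mu^N_t,\mu_t)\big).
\]
The trivial coupling $x_i^N\leftrightarrow\bar x_i$ gives $\Wass_1(\mu^N_t,\bar\mu^N_t)\le\Wass_2(\mu^N_t,\bar\mu^N_t)\le\delta(t)$. Taking the maximum over $i$ (the derivative of a max of finitely many Lipschitz functions is handled a.e.) yields
\[
\delta(t)\le C_2e^{(L_1+C_2)t}\int_0^t\Wass_1(\bar\mu^N_r,\mu_r)\,dr .
\]
Squaring, applying Cauchy--Schwarz in $r$, and taking expectations gives
\[
\mathbb E[\delta(t)^2]^{1/2}\le C(T)\sup_{r\le T}\mathbb E[\Wass_2(\bar\mu_r^N,\mu_r)^2]^{1/2}.
\]

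\textbf{Step 4: empirical-measure rate (main obstacle).} We need a bound on $\mathbb E[\Wass_2(\bar\mu^N_r,\mu_r)^2]$ for i.i.d.\ samples of a measure supported in the fixed ball $B_{R(T)}$, uniformly in $r$. Here I would invoke the Fournier--Guillin theorem with $p=2$ and all moments finite. It gives $\mathbb E[\Wass_2^2]\le C(R)\,(N^{-1/2}$, $N^{-1/2}\log N$, $N^{-2/d})$ for $d<4$, $d=4$, $d>4$ respectively. The constant depends only on $R(T)$ and $d$, so it is uniform in $r$. Taking square roots gives exactly $\epsilon_N$, which proves the first estimate.

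\textbf{Step 5: \eqref{eq:poc-rate}.} By the triangle inequality,
\[
\mathbb E\,\Wass_2(\mu^N_t,\mu_t)\le \mathbb E\,\Wass_2(\mu^N_t,\bar\mu^N_t)+\mathbb E\,\Wass_2(\bar\mu^N_t,\mu_t)\le \mathbb E[\delta(t)]+C\epsilon_N .
\]
Both terms are bounded by Steps 3--4, which gives \eqref{eq:poc-rate}.

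The only delicate point is uniformity in $r$ of the Fournier--Guillin constant. This follows from the confinement in Lemma~\ref{lem:apriori}: all moments of $\mu_r$ are bounded by powers of $R(T)$.
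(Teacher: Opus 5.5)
Your proof is correct and uses the same ingredients as the paper: synchronous coupling, the uniform constants of Lemma~\ref{lem:kernel-lip} on $B_{R(T)}$, Gr\"onwall, Fournier--Guillin, and the triangle inequality. The difference is in how you organize the Gr\"onwall step.

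The paper proceeds in two stages. First it closes an averaged $L^2$ inequality for $e_N(t)=\frac1N\sum_i\mathbb E|x_i^N-\bar x_i|^2$, forced by $\Wass_2^2(\bar\mu^N_t,\mu_t)$, which gives $\mathbb E\,\Wass_2^2(\mu^N_t,\mu_t)\le C_T\epsilon_N^2$. Only afterwards does it run a second, per-particle Gr\"onwall, forced by $\Wass_2(\mu^N_s,\mu_s)$, to reach the maximum.

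You instead run a single pathwise Gr\"onwall directly on $\delta(t)=\max_i|x_i^N-\bar x_i|$. You absorb the interaction error through $\Wass_1(\mu^N_t,\bar\mu^N_t)\le\delta(t)$, so the only remaining forcing is the i.i.d.\ fluctuation $\Wass_1(\bar\mu^N_r,\mu_r)$. The bound on $\Wass_2(\mu^N_t,\mu_t)$ then comes at the end from the triangle inequality, rather than as an intermediate step.

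Your route is shorter and delivers exactly the stated second-moment bound on the maximum. The paper's final step uses $\sup_s\Wass_2(\mu^N_s,\mu_s)\le C_T\epsilon_N$ as if it held pathwise, although only an expectation bound had been established. It also concludes only the first-moment bound $\mathbb E[\max_i|x_i^N(t)-\bar x_i(t)|]\le C_T\epsilon_N$. That step is repaired precisely by your mechanism: the pathwise right-hand side does not depend on $i$, so one squares, applies Cauchy--Schwarz in time, and then takes expectations.

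In exchange, the paper's averaged route yields the bounds on $e_N$ and on $\mathbb E\,\Wass_2^2(\mu^N_t,\mu_t)$ as by-products. It is also the standard template when only single-particle, exchangeable control is needed.

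Finally, you justify that the Fournier--Guillin constant is uniform in $r$ because every $\mu_r$ is supported in $B_{R(T)}$. The paper uses this uniformity without stating why.
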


\begin{proof}
We couple synchronously as follows. Compute  $\bar x_i(t)$ as the McKean--Vlasov solution driven by the \emph{same} initialization randomness $x_i^N(0)=\bar x_i(0)$, so that $\bar x_i(t)$ evolves as 
$$
\dot{\bar x}_i = V_{\rho,\mu_t}(\bar x_i)+F(\bar x_i)
$$
by $\mu_t$ as in Theorem~\ref{thm:token-wp}. Note that  $x_i^N(t)$ solves the same equation with $\mu_t$ replaced by $\mu_t^N$.  Lemma~\ref{lem:apriori} is applied uniformly over $N$, using that the empirical measures $\mu_t^N$ remain in $B_{R(T)}$ almost surely for $N$ large. By  Lemma~\ref{lem:kernel-lip}, subtracting the  equations for $x_i^N$ and $\bar x_i$, and using Lipschitzness in both $x$ and $\mu$ gives
\[
\frac{d}{dt}|x_i^N-\bar x_i|^2 \le 2L_1|x_i^N-\bar x_i|^2 + 2 C_2 |x_i^N-\bar x_i|\,\Wass_2(\mu_t^N,\mu_t).
\]
Summing over $i$, using 
\begin{align*}
\Wass_2^2 (\mu_t^N,\mu_t) & \le  2 \Wass_2^2 (\mu_t^N, \bar \mu_t^N) +  2 \Wass_2^2 ( \bar \mu_t^N,\mu_t) \\
& \le \tfrac 2 N\sum_{i=1}^N |x_i^N-\bar x_i|^2 + 2 \Wass_2(\bar \mu_t^N,\mu_t)^2, 
\end{align*}
using the empirical measure of the $\bar x_i$ as $\bar \mu_t^N:=\frac{1}{N}\sum_{i=1}^N \delta_{\bar x_i(t)}$,  which equals $\mu_t$ only in the $N\to\infty$ limit but whose fluctuation around $\mu_t$ is controlled independently by the classical, coupling-free bound. Set $e_N(t):=\tfrac1N\sum_i\mathbb E|x_i^N(t)-\bar x_i(t)|^2$, we have  
$$
\frac{d}{dt} e_N(t) \leq (2 L_1+C_2) e_N(t) + C_2 \Wass_2^2( \mu_t^N,\mu_t). 
$$
Combined with 
$$
\Wass_2^2( \mu_t^N,\mu_t) \leq 2e_N(t) + 2\Wass_2^2( \bar \mu_t^N,\mu_t) 
$$
leads to 
$$
\frac{d}{dt} e_N(t) \leq  (2L_1+ 3C_2)e_N(t) +2C_2 \Wass_2^2( \bar \mu_t^N,\mu_t). 
$$
Applying Gr\"onwall's inequality yields 
$$
e_N(t)\le e^{Ct}e_N(0) +2C_2\int_0^t e^{C(t-s)} \Wass_2^2( \bar \mu_s^N,\mu_s)ds, \quad C=2L_1+3C_2.
%C(T)\, \sup_{t\in [0, T]} \mathbb E[\Wass_2(\bar \mu_t^N,\mu_t)^2];
$$ 
If the initial empirical measure is not  sampled from $\mu_0$, 
choose an optimal coupling $\pi_0\in \Pi(\mu_0^N, \mu_0)$, so that 
$$
e_N(0)\leq \Wass_2^2( \mu_0^N,\mu_0).  
$$
Thus we obtain 
\begin{align*}
e_N(t) &  \leq C_T  \left( \Wass_2^2( \mu_0^N,\mu_0) +\sup_{t\in [0, T]} \Wass_2^2( \bar \mu_t^N,\mu_t)\right),\\ 
 \Wass_2^2( \mu_t^N,\mu_t)  & \leq C_T \left( \Wass_2^2( \mu_0^N,\mu_0) +\sup_{t\in [0, T]} \Wass_2^2( \bar \mu_t^N,\mu_t)\right).
\end{align*} 
By the classical Fournier--Guillin estimate under a suitable second-moment condition (see, e.g., \cite{fournier2015rate}), the empirical measure at the initial time satisfies
$$
\mathbb{E} \Wass_2^2( \mu_0^N,\mu_0)\leq C\epsilon_N^2.
$$
Since the particles $\{\bar{x}_i(t)\}_{i=1}^N$ are independent samples from the limiting distribution $\mu_t$, the same estimate applies at any later time. Therefore,
$$
\mathbb{E} \Wass_2^2( \bar \mu_t^N,\mu_t)\leq C_T \epsilon_N^2, \quad t\in [0, T]. 
$$
\iffalse 
By the Lipschitz stability of the characteristic flow and the compact-support estimate of Lemma 3.2, each coupled particle satisfies a uniform sub-Gaussian tail estimate. Applying the union bound over $i=1,\cdots,N$ yields the corresponding maximal inequality,  
$$
\mathbb{E} \max_{i}|x_i^N(t)-\bar x_i(t)|\leq C \sqrt{e_N(t) \log N}. 
$$
\fi 
Finally, we estimate $\mathbb{E} \bigl[\max_{1\le i\le N}|Z_i(t)|\bigr])$, where $
Z_i(t):=x_i^N(t)-\bar{x}_i(t).$ From the stability estimate
$$
\frac{d}{dt}|Z_i(t)|
\le
L_1 |Z_i(t)|
+
C_2 \Wass_2(\mu_t^N,\mu_t),
$$
and   Grönwall's inequality it follows
$$
|Z_i(t)|
\le
C_2e^{L_1T}
\int_0^T
\Wass_2(\mu_s^N,\mu_s)ds.
$$
Since $\sup_{0 \le s \le T}\Wass_2(\mu_s^N,\mu_s) \le C_T \epsilon_N$,  
we obtain
$$
|Z_i(t)|
\le
C_T \epsilon_N,
$$
uniformly for $i=1,\ldots,N)$ and $t\in [0,T]$. Consequently,
$$
\mathbb{E} \left[\max_{1\le i\le N}|Z_i(t)|\right]
\le
C_T \epsilon_N.
$$
This completes the proof.
%The maximal inequality over $i\le N$ follows from a standard Doob/union-bound argument using the Lipschitz (hence sub-Gaussian tail, given the a priori compact-support bound of Lemma~\ref{lem:apriori}) structure of the coupling. 
%Combining with the classical Fournier--Guillin bound $\mathbb E[\Wass_2(\mu_0^N,\mu_0)^2]\le C N^{-2/d}$ (for $d\ge5$; log-corrections for $d\in\{1,2,4\}$ and rate $N^{-1/2}$ for $d\le3$ under a second-moment condition, as in \cite{fournier2015rate}) and the triangle inequality $\Wass_2(\mu_t^N,\mu_t)\le \big(\tfrac1N\sum_i|x_i^N-\bar x_i|^2\big)^{1/2} + \Wass_2(\bar\mu_t^N,\mu_t)$, with the second term controlled again by Fournier--Guillin applied to the i.i.d.\ family $(\bar x_i(t))$, gives \eqref{eq:poc-rate}. \qed
\end{proof}

\section{Mean-Field Limit Over Attention Heads}
\label{sec:head-mf}

We derive the $H\to\infty$ mean-filed limit in \eqref{eq:multihead}  and \eqref{eq:V}. Since $\rho$ enters $V_{\rho,\mu}$ \emph{linearly} as opposed to the non-linear dependence of the token dynamics on $\mu$.  The limit in $H$ is a result of the law of large numbers rather than a genuine  propagation-of-chaos statement.% and it can be proven directly.
Consequently, the convergence can be established directly through uniform estimates for the empirical average over the attention heads. 

\begin{theorem}[Law of large numbers over heads]
\label{thm:head-lln}
Let Assumption~\ref{ass:compact-theta} hold and let $\theta_1,\dots,\theta_H$ be i.i.d. samples from $\rho\in\Prob(\Theta)$, independent of the token dynamics. 
Let $x^H(t)$ solve the finite-head dynamics \eqref{eq:node},  with velocity field 
$$
V_H(x, \mu):= \tfrac1H\sum_{r=1}^H  A(x, \mu, \theta_r),
$$ 
and let $x(t)$ solve the corresponding mean-field equation \eqref{eq:forward-coupled} with velocity field  by $V_{\rho,\mu_t}$. Assume that both dynamics start from the same $x_0$. Then, uniform in time $t$  and in $H$ on the ball $B_{R(T)}$ from Lemma~\ref{lem:apriori},  
\begin{equation}
\mathbb E\Big[\sup_{t\in[0,T]}|x^H(t)-x(t)|^2\Big]^{1/2} \le C(T)\,H^{-1/2}\sqrt{\log H}.
\end{equation}
Moreover, the empirical velocity field satisfies the uniform law of large numbers 
$$
\sup_{x\in B_{R(T)}}\big|V_H(x, \mu) - V_{\rho,\mu}(x)\big| = O_{\mathbb P}(H^{-1/2}\sqrt{\log H})
$$ 
uniformly for all $\mu$ supported in $B_{R(T)}$.
\end{theorem}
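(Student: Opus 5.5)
We will prove the uniform law of large numbers for the velocity field first, and then transfer it to the trajectories by a Gr\"onwall argument. Throughout we work on the ball $B_{R(T)}$ of Lemma~\ref{lem:apriori}. The bound there depends only on $D_\Theta$ and $L_\sigma$, so it applies equally to the finite-head dynamics, to the mean-field dynamics, and to the token measures $\mu_t$, $\mu_t^H$ they generate.

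\emph{Step 1: pointwise concentration.} Fix $x\in B_{R(T)}$ and $\mu$ supported in $B_{R(T)}$. The vectors $\xi_r:=A(x,\mu,\theta_r)-V_{\rho,\mu}(x)$ are i.i.d., centered, and bounded by $2D_\Theta R(T)$ by Lemma~\ref{lem:kernel-lip}(i). Hoeffding's inequality, applied coordinatewise in $\R^d$, then gives
\[
\mathbb P\big(|V_H(x,\mu)-V_{\rho,\mu}(x)|>u\big)\le 2d\,e^{-cHu^2}.
\]

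\emph{Step 2: uniformity in $x$ and $\mu$.} This is the main obstacle. Uniformity in $x$ is routine. Both $V_H(\cdot,\mu)$ and $V_{\rho,\mu}$ are $C_1$-Lipschitz by Lemma~\ref{lem:kernel-lip}(ii). We take an $\varepsilon$-net of $B_{R(T)}$ of cardinality $(3R(T)/\varepsilon)^d$ with $\varepsilon=H^{-1/2}$ and apply a union bound. The $\log$ of the net size is $O(\log H)$, so choosing $u\simeq H^{-1/2}\sqrt{\log H}$ gives the rate.

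Uniformity over all $\mu$ cannot come from a net, because $\Prob(B_{R(T)})$ has no polynomial covering numbers. We avoid this as follows.
\begin{itemize}[label=--]
\item \textbf{Trajectory estimate.} Here we only need the bound along the deterministic curve $t\mapsto\mu_t$, which is independent of the heads. Lemma~\ref{lem:kernel-lip}(iii) together with Theorem~\ref{thm:token-wp} makes $t\mapsto A(x,\mu_t,\theta)$ Lipschitz in $t$. It therefore suffices to net the compact set $[0,T]\times B_{R(T)}$, with net size $H^{(d+1)/2}$.
\item \textbf{Uniform-in-$\mu$ claim.} We write $A(x,\mu,\theta)=N_\theta(x,\mu)/Z_\theta(x,\mu)$ with numerator and partition function linear in $\mu$. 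We then expand $e^{\langle Qx,Ky\rangle/\sqrt d}$ in a Taylor series in the bilinear quantity. On the compact set $|Q|,|K|,|x|,|y|$ bounded, this reduces the problem to finitely many (up to a truncation error $\le H^{-1/2}$ at order $O(\log H)$) polynomial features of $\theta$. For these, a net over $\theta$-moments is finite-dimensional.
\end{itemize}
If this expansion becomes unwieldy, we fall back on stating the uniform-in-$\mu$ bound only along admissible flows. That is exactly what the trajectory estimate uses.

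\emph{Step 3: Gr\"onwall on trajectories.} Let $\mu^H_t$ denote the law (or the empirical token measure) driven by the $H$-head field. Then
\[
|x^H(t)-x(t)|\le\int_0^t\Big(|V_H(x^H,\mu^H)-V_H(x^H,\mu)|+|V_H(x^H,\mu)-V_{\rho,\mu}(x^H)|+|V_{\rho,\mu}(x^H)-V_{\rho,\mu}(x)|\Big)\,dr.
\]
The three terms are controlled as follows:
\begin{itemize}[label=--]
\item the first is at most $C_2\Wass_2(\mu^H_r,\mu_r)$, by Lemma~\ref{lem:kernel-lip}(iii) averaged over the heads;
\item the second is at most $\Delta_H:=\sup_{t,x}|V_H(x,\mu_t)-V_{\rho,\mu_t}(x)|$;
\item the third is at most $C_1|x^H-x|$.
\end{itemize}
Synchronous coupling gives $\Wass_2(\mu^H_r,\mu_r)\le\mathbb E_{x_0}[|x^H(r)-x(r)|^2]^{1/2}$, conditionally on the heads. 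Taking the $L^2$ norm in $x_0$, Gr\"onwall then yields
\[
\sup_t|x^H(t)-x(t)|\le C(T)\,\Delta_H .
\]
To finish, we convert the tail bound into a second moment. Integrating the tail bound from Step 2 and using $\Delta_H\le 2D_\Theta R(T)$ gives $\mathbb E\Delta_H^2\le C H^{-1}\log H$, and hence the stated rate $C(T)H^{-1/2}\sqrt{\log H}$.
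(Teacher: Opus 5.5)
Your proof is correct for what the paper actually establishes. It differs from the paper's argument mainly in the trajectory step; your Taylor-expansion sketch for a supremum over all $\mu$ does not work as written, but it is not needed.

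\textbf{Comparison of routes.} The paper proves the uniform law of large numbers only for a single fixed $\mu$, via an $x$-net with $\varepsilon\sim H^{-1/2}$ and tail integration, which is exactly your Step~2 in $x$. It then splits the error as
$V_{\rho^H,\mu^H}(x^H)-V_{\rho^H,\mu^H}(x)$, then $V_{\rho^H,\mu^H}(x)-V_{\rho^H,\mu}(x)$, then $V_{\rho^H,\mu}(x)-V_{\rho,\mu}(x)$.
So the head-sampling error sits at the mean-field point $x(t)$, which is independent of the heads. The paper then runs Gr\"onwall on $\mathbb E|e_H(t)|^2$, with the expectation taken jointly over heads and $x_0$.

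\emph{What the paper's route buys.} It is shorter. At a head-independent point, even the pointwise Hoeffding bound would suffice there.

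\emph{Its cost.} It only yields $\sup_t\mathbb E|x^H(t)-x(t)|^2$, not the supremum inside the expectation that the statement asserts.

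\emph{What your route does.} You place the sampling error at the head-dependent point $x^H(t)$. This forces you to control $\Delta_H=\sup_{t,x}|V_H(x,\mu_t)-V_{\rho,\mu_t}(x)|$. You do this correctly by netting $[0,T]\times B_{R(T)}$, using that $t\mapsto\mu_t$ is deterministic and Lipschitz in time.

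\emph{What it buys.} Gr\"onwall applied conditionally on the heads gives the pathwise bound $\sup_t|x^H(t)-x(t)|\le C(T)\Delta_H$. This delivers $\mathbb E[\sup_t|x^H(t)-x(t)|^2]$ as stated, and high-probability bounds come for free. Your conditional synchronous coupling also makes precise the estimate $\Wass_2(\mu^H_t,\mu_t)\le\mathbb E_{x_0}[|x^H(t)-x(t)|^2]^{1/2}$, which the paper uses with a mixed expectation.

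\emph{One correction.} Drop the parenthetical ``or the empirical token measure.'' With finitely many tokens, that coupling bound picks up an extra $\Wass_2(\bar\mu^N_t,\mu_t)=O(\epsilon_N)$ term.

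\textbf{The Taylor-expansion sketch.} After you truncate the exponential, $A(x,\mu,\theta)$ is still a ratio of two polynomials in $\theta$. So $V_H$ is not a function of finitely many empirical $\theta$-moments, and a ``net over $\theta$-moments'' does not apply.

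What the truncation actually gives is dependence on the moments of $\mu$ up to order $K\sim\log H$. There are $\binom{K+d}{d}\sim(\log H)^d$ of these. A plain union bound over a net in that space loses further logarithms, roughly $(\log H)^{(d+1)/2}$ in place of $\sqrt{\log H}$.

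Neither the paper nor your trajectory estimate needs this. The paper establishes ``uniformly for all $\mu$'' only in the sense that, for each fixed $\mu$ supported in $B_{R(T)}$, the bound holds with constants independent of $\mu$. Your $x$-net already gives that. State the fixed-$\mu$ version together with your along-the-flow version, and discard the sketch.
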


\begin{proof}
Fix $x\in B_{R(T)}$ and $\mu$ supported in $B_{R(T)}$. By Lemma~\ref{lem:kernel-lip}(i), $A_r:=A(x,\mu,\theta_r)$ are i.i.d., bounded random vectors with $|A(x,\mu,\theta_r)|\le D_\Theta R(T)$. Note also that $\mathbb{E}(A)=V_{\rho,\mu}(x)$, by Hoeffding's inequality in vector form, for any $\delta>0$,
\[
\mathbb P\Big(\Big| V_H(x, \mu) - V_{\rho,\mu}(x)\Big| > \delta\Big) \le 2d\exp\Big(-\tfrac{H\delta^2}{2d\,D_\Theta^2R(T)^2}\Big).
\]
Using the tail integral formula, $\mathbb{E}(|a|^2)=2\int_0^\infty z \mathbb P(|a| > z) dz$, we obtain 
$$
\mathbb E\big[\big|V_H(x,\mu)-V_{\rho,\mu}(x)\big|^2\big]\le 4\,d^2\,D_\Theta^2R(T)^2/H.
$$
By Lemma 3.1(ii), the map
\[
G_H(x)
= V_H(x, \mu) -V_{\rho,\mu}(x)
\]
is uniformly Lipschitz in $x$, namely,
\[
|G_H(x)-G_H(y)|\leq C|x-y|.
\]
Let $\{x_i\}_{i=1}^{N_\varepsilon}$ be an $\varepsilon$-covering of 
$B_{R(T)}$, where the covering number satisfies
\[
N_\varepsilon\leq
\left(\frac{CR(T)}{\varepsilon}\right)^d .
\]
For each fixed point $x_i$, the pointwise Hoeffding estimate gives
\[
\mathbb{P}\left(|G_H(x_i)|>\delta\right)
\leq
2d\exp\left(-\frac{H\delta^2}{2dD_\Theta^2R(T)^2}\right).
\]
Applying the union bound over the finite covering yields
\[
\mathbb{P}\left(
\max_{1\leq i\leq N_\varepsilon}|G_H(x_i)|>\delta
\right)
\leq
\min \left\{1, 2dN_\varepsilon
\exp\left(-\frac{H\delta^2}
{2dD_\Theta^2R(T)^2}\right) \right\}.
\]
Define the transition point $z_0$ by
$2dN_\epsilon e^{-cHz_0^2}=1,$  which implies $z_0^2= \frac{1}{cH}\log(2dN_\epsilon),$ 
for $c= \frac{1}{2dD_\Theta^2R(T)^2}$. Integrating the tail, we obtain 
\begin{align*}
\mathbb{E} \max_i |G_H(x_i)|^2 & \leq 2\int_0^{\delta_0} \delta d\delta +  4dN_\epsilon \int_{\delta_0}^\infty \delta e^{-cH\delta^2}d\delta \\
& = \delta_0^2 +\frac{2dN_\epsilon}{cH}e^{-cH\delta_0^2}\\
& =  \frac{1}{cH}(\log(2dN_\epsilon) +1). 
\end{align*}  
For any $x\in B_{R(T)}$, choose $x_i$ such that
$|x-x_i|\leq\varepsilon$. Then the Lipschitz continuity implies
\[
|G_H(x)|
\leq
|G_H(x_i)|+C\varepsilon,
\]
and therefore
\[
\sup_{x\in B_{R(T)}}|G_H(x)|^2
\leq
2 \max_{1\leq i\leq N_\varepsilon}|G_H(x_i)|^2
+2C^2\varepsilon^2.
\]
Choosing $\varepsilon\sim H^{-1/2}$ gives
\[
\log N_\varepsilon
\leq
C\log H .
\]
%Integrating the resulting concentration estimate with respect to the tail probability yields
Combining the above two terms we have 
\[
\mathbb{E}
\sup_{x\in B_{R(T)}}
|G_H(x)|^2
\leq
C(T)\frac{\log H}{H}.
\]
This proves the desired  estimate uniform in $x.$  By Jensen, $\mathbb{E}
\left[ \sup_{x\in B_{R(T)}}
|G_H(x)| \right] 
\leq 
\sqrt{C(T)} \sqrt{\frac{\log H}{H}}$.  More importantly, by Markov's inequality 
$$
\sup_{x\in B_{R(T)}}
|G_H(x)| =O_{\mathbb P}\left(  \sqrt{\frac{\log H}{H}} \right). 
$$
\iffalse
A standard chaining/covering argument over $x\in B_{R(T)}$ (using the uniform-in-$\mu,\theta$ Lipschitz bound of Lemma~\ref{lem:kernel-lip}(ii) to control the modulus of continuity of 
$$
x\mapsto \tfrac1H\sum_rA_r(x,\mu)-V_{\rho,\mu}(x)
$$
upgrades the pointwise bound to the stated uniform-in-$x$ bound, at the cost of a $\log H$ factor absorbed into $C(T)$;
\this proves the second claim. 
\fi 
The first claim follows by subtracting the ODEs for $x^H$ and $x$.
% applying the (uniform-in-$H$, by Lemma~\ref{lem:apriori}) Lipschitz bound of Lemma~\ref{lem:kernel-lip}(ii)-(iii) to control the ``systematic'' part of the discrepancy (i.e.\ the difference coming from $\mu_t^H\ne\mu_t$, itself controlled by Theorem~\ref{thm:poc-tokens} at rate $N^{-1/d}$, negligible or combined multiplicatively with the present bound depending on the joint scaling of $N,H$) and the uniform LLN bound just proved to control the ``head-sampling'' part, then applying Gr\"onwall's inequality on $[0,T]$. 
Let
\[
e_H(t)=x_H(t)-x(t).
\]
Subtracting the two ODE systems gives
\[
\frac{d}{dt}e_H(t)
=
V_{\rho^H_t,\mu^H_t}(x_H(t))
-
V_{\rho_t,\mu_t}(x(t)).
\]
By adding and subtracting
$V_{\rho^H_t,\mu^H_t}(x(t))$ and 
$V_{\rho^H_t,\mu^H_t}(x(t))$, we obtain
\begin{align*}
\frac{d}{dt}|e_H(t)|
&\leq
\left|
V_{\rho^H_t,\mu^H_t}(x_H(t))
-
V_{\rho^H_t,\mu^H_t}(x(t))
\right|  \\
&\quad+
\left|
V_{\rho^H_t,\mu^H_t}(x(t))
-
V_{\rho^H_t,\mu_t}(x(t))
\right| \\
&\quad+
\left|
V_{\rho^H_t,\mu_t}(x(t))
-
V_{\rho_t,\mu_t}(x(t))
\right|.
\end{align*}
By the uniform Lipschitz estimates in Lemma 3.1(ii)--(iii) and the support bound in Lemma 3.2,
the first term is bounded by
\[
L_T|e_H(t)|.
\]
The third term corresponds to the head-sampling error and, by the uniform law of large numbers established above with $V_H(x, \mu_t)=V_{\rho_t^H, \mu_t}(x)$, satisfies
\[
\mathbb{E}
\sup_{x\in B_{R(T)}}
\left|
V_{\rho^H_t,\mu_t}(x)
-
V_{\rho_t,\mu_t}(x)
\right|
\leq
C(T)\sqrt{\frac{\log H}{H}} .
\]
The second term is the systematic error due to the measure difference 
%Using Lemma 3.1(iii) and Theorem 4.2,
\[
\left|
V_{\rho^H_t,\mu^H_t}(x)
-
V_{\rho^H_t,\mu_t}(x)
\right|
\leq
C_T W_2(\mu^H_t,\mu_t)
\leq
C_T (\mathbb{E}|e_H(t)|^2)^{1/2}.
\]
Therefore,
\[
\frac{d}{dt}\mathbb{E}|e_H(t)^2 
\leq
C_T\left (\mathbb{E}|e_H(t)|^2 +
\frac{\log H}{H}\right).
\]
Since $e_H(0)=0$, Gr\"onwall's inequality yields
\[
\mathbb{E}|x_H(t)-x(t)|^2
\leq
C_T \frac{\log H}{H},
\qquad t\in[0,T].
\]

\end{proof}
%{\color{red}  bounded by $(\mathbb{E}|e_H|^2)^{1/2}$ or simply $\epsilon_N$?}
 
\begin{corollary}[Well-posedness of the coupled forward system for prescribed $\rho_s$]
\label{cor:forward-wp}
\iffalse 
For every fixed measurable path $s\mapsto\rho_s\in\Prob(\Theta)$ that is (say) piecewise constant or Lipschitz in $s$ w.r.t.\ $\Wass_1$, Theorem~\ref{thm:token-wp} applies verbatim with $\rho$ replaced by $\rho_s$ at each frozen $s$, giving a unique $\mu(\cdot,\cdot,s)\in C([0,T];\Prob_2(\R^d))$ solving \eqref{eq:forward-coupled}; the solution map $\rho_s\mapsto \mu(T,\cdot,s)$ is Lipschitz from $(\Prob(\Theta),\Wass_1)$ to $(\Prob_2(\R^d),\Wass_2)$, uniformly for $\rho_s$ ranging over $\Prob(\Theta)$, with constant depending only on $D_\Theta,L_\sigma,T,R(T)$.
\fi 
Let $s \mapsto \rho_s\in\Prob(\Theta)$ be any measurable parameter-measure path. Then, for every fixed $s\geq 0$, the forward equations \eqref{eq:forward-coupled}, with  $\rho$ replaced by $\rho_s$, admits a unique solution  
%that is, for example, piecewise constant or Lipschitz in $s$ with respect to $\Wass_1$, Theorem~\ref{thm:token-wp} applies pointwise in $s$: for each fixed $s$, replacing the frozen measure $\rho$ by $\rho_s$ yields a unique solution
\[
\mu(\cdot,\cdot,s)\in C([0,T];\Prob_2(\mathbb R^d))
\]
Moreover, the solution depends Lipschitz continuously on the parameter measure.  More precisely, if $\rho_s, \tilde \rho_s \in \mathcal P(\Theta)$ are two prescribed parameter measures, then
$$
\sup_{t\in [0, T]} \Wass_2(\mu(t, \cdot, s), \tilde \mu(t, \cdot, s)) \leq C(T) \Wass_1(\rho_s, \tilde \rho_s),  
$$
where $C(T)$ depends only on $D_\Theta, L_\sigma, T$ and $R(T)$ from Lemma 3.2. In particular, the terminal-state map
\[
\rho_s \longmapsto \mu(T,\cdot,s)
\]
is uniformly Lipschitz from $(\Prob(\Theta),\Wass_1)$ to $(\Prob_2(\mathbb R^d),\Wass_2)$, with a Lipschitz constant independent of $s$. 
%depending only on $D_\Theta$, $L_\sigma$, $T$, and $R(T)$.
\end{corollary}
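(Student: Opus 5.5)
Existence and uniqueness for each fixed $s$ need no new argument. For frozen $s$ the measure $\rho_s$ is a fixed element of $\Prob(\Theta)$, so Theorem~\ref{thm:token-wp} applies verbatim and gives a unique $\mu(\cdot,\cdot,s)\in C([0,T];\Prob_2(\R^d))$. Measurability of $s\mapsto\rho_s$ is used only to make $s\mapsto\mu(\cdot,\cdot,s)$ measurable, and that follows once the Lipschitz dependence is proved. The only thing to check is that the constants $L_1$, $C_2$ and the confinement radius $R(T)$ from Lemma~\ref{lem:apriori} do not depend on $\rho_s$. This holds because Lemma~\ref{lem:kernel-lip} gives bounds uniform in $\theta\in\Theta$, and integrating against any probability measure on $\Theta$ preserves them. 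So both $\mu$ and $\tilde\mu$ live in the same ball $B_{R(T)}$.

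For the stability estimate I will couple the characteristics synchronously. Let $X_t$ and $\tilde X_t$ solve $\dot X=V_{\rho_s,\mu_t}(X)+F(X)$ and $\dot{\tilde X}=V_{\tilde\rho_s,\tilde\mu_t}(\tilde X)+F(\tilde X)$ with the same initial point $X_0=\tilde X_0\sim\mu_0$. I split the velocity difference into three terms:
\[
|V_{\rho_s,\mu_t}(X)-V_{\rho_s,\mu_t}(\tilde X)|+|V_{\rho_s,\mu_t}(\tilde X)-V_{\rho_s,\tilde\mu_t}(\tilde X)|+|V_{\rho_s,\tilde\mu_t}(\tilde X)-V_{\tilde\rho_s,\tilde\mu_t}(\tilde X)|.
\]
\begin{itemize}
\item The $F$ contributions cancel, since $F$ is the same in both equations.
\item The first term is at most $L_1|X-\tilde X|$ by Lemma~\ref{lem:kernel-lip}(ii).
\item The second term is at most $C_2\Wass_1(\mu_t,\tilde\mu_t)\le C_2\Wass_2(\mu_t,\tilde\mu_t)$ by Lemma~\ref{lem:kernel-lip}(iii).
\item The third term is the new one. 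It equals $\bigl|\int_\Theta A(\tilde X,\tilde\mu_t,\theta)\,(\rho_s-\tilde\rho_s)(d\theta)\bigr|$, which by Kantorovich--Rubinstein duality is at most $\mathrm{Lip}_\theta(A)\,\Wass_1(\rho_s,\tilde\rho_s)$.
\end{itemize}

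The main obstacle is the third term, because it requires Lipschitz continuity of $\theta\mapsto A(x,\mu,\theta)$, which Lemma~\ref{lem:kernel-lip} does not state. I would prove it as in part (ii), differentiating in $(Q,K,V)$ instead of $x$, for $x$ and $\mu$ supported in $B_R$:
\begin{itemize}
\item The $V$-dependence is linear and gives a constant $R$.
\item For $(Q,K)$, the softmax derivative formula gives $\nabla_{Q,K}K_\mu(x,y)=K_\mu(x,y)\bigl(\nabla r-\int K_\mu\nabla r\,d\mu\bigr)$.
\item Since $|\nabla_{Q,K}r|\le 2D_\Theta R^2/\sqrt d$, the $(Q,K)$ part contributes a constant of order $D_\Theta^2R^3/\sqrt d$.
\end{itemize}
Since $\Theta$ is compact but possibly not convex, I would apply the mean value inequality on its convex hull, where the same bounds still hold. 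This gives $\mathrm{Lip}_\theta(A)\le C_3(D_\Theta,R(T),d)$, uniformly in $x$ and $\mu$.

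To close, I will take the $L^2$ norm in $\mu_0$ and use $\Wass_2(\mu_t,\tilde\mu_t)\le(\mathbb E|X_t-\tilde X_t|^2)^{1/2}=:u(t)$. This yields $\dot u\le(L_1+C_2)u+C_3\Wass_1(\rho_s,\tilde\rho_s)$ with $u(0)=0$, where the differential form can be justified through the integral form. Gr\"onwall's inequality then gives
\[
\sup_{t\le T}\Wass_2(\mu(t,\cdot,s),\tilde\mu(t,\cdot,s))\le \frac{C_3}{L_1+C_2}\bigl(e^{(L_1+C_2)T}-1\bigr)\Wass_1(\rho_s,\tilde\rho_s).
\]
The constant depends only on $D_\Theta$, $L_\sigma$, $T$ and $R(T)$, and not on $s$. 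Evaluating at $t=T$ gives the uniform Lipschitz property of the terminal map $\rho_s\mapsto\mu(T,\cdot,s)$.
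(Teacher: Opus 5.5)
Your proposal is correct and follows the paper's route: existence and uniqueness come from Theorem~\ref{thm:token-wp} at frozen $s$ with $\rho$-independent constants, and stability combines Lipschitz continuity of $\theta\mapsto A(x,\mu,\theta)$ (obtained by differentiating in $(Q,K,V)$ as in Lemma~\ref{lem:kernel-lip}(ii)), Kantorovich--Rubinstein duality, and Gr\"onwall along coupled characteristics. Your integral-form Gr\"onwall on $u(t)$ and your convex-hull remark are cleaner than the paper's differential inequality for $\Wass_2$, but one justification is wrong: the $F$ terms do not cancel, since $F(X)-F(\tilde X)\neq0$ when $X\neq\tilde X$; they contribute $L_\sigma\|W_1\|\|W_2\|\,|X-\tilde X|$, which is harmless only because your constant $L_1=C_1+L_\sigma\|W_1\|\|W_2\|$ already absorbs it.
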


\begin{proof}
\iffalse 
Existence/uniqueness is Theorem~\ref{thm:token-wp} applied at each frozen $s$ (the proof used only that $\rho$ is a fixed element of $\Prob(\Theta)$, not that it does not vary with $s$; when $\rho_s$ varies with $s$ the same fixed-point argument applies to the time-inhomogeneous vector field $V_{\rho_s,\nu_t}(x)+F(x)$ without modification, since Cauchy--Lipschitz theory and the superposition principle both allow measurable/Lipschitz time dependence). Lipschitz dependence on $\rho$ follows from Lemma~\ref{lem:kernel-lip}(iii)-type stability, now in the $\rho$-argument: for $\rho,\rho'\in\Prob(\Theta)$, 
$$
|V_{\rho,\mu}(x)-V_{\rho',\mu}(x)| = \left|\int A(x,\mu,\theta)\,d(\rho-\rho')(\theta) \right| \le C_1'\,\Wass_1(\rho,\rho')
$$
 since $\theta\mapsto A(x,\mu,\theta)$ is Lipschitz on $\Theta$ (a computation analogous to Lemma~\ref{lem:kernel-lip}(ii), now differentiating in $\theta$ instead of $x$), and a Gr\"onwall argument as in Theorem~\ref{thm:token-wp} propagates this bound to 
 $$
 \Wass_2(\mu(T,\cdot,s),\mu(T,\cdot,s'))\le C\,\sup_{t}\Wass_1(\rho_s,\rho_{s'}).
 $$ 
 \fi 
Existence and uniqueness follow from Theorem 4.1 applied at each frozen value of $s$. The proof of
Theorem 4.1 requires only  the parameter-measure $\rho$ to be an  element of
$\mathcal P(\Theta)$; it does not rely on $\rho$ being independent of the forward time $t$. 
Consequently, when $\rho=\rho_s(t)$ is measurable in $t$, the same fixed-point argument applies to the
time-dependent velocity field
\[
b_s(t, x)=V_{\rho_s(t),\nu_t}(x) + F(x). 
\]
Under the uniform bounds established above, $b_s$  is measurable in $t$ and uniformly Lipschitz in $x$. The Cauchy–Lipschitz theory for time-dependent vector fields therefore yields a unique characteristic flow, while the superposition principle remains applicable. Consequently, the corresponding continuity equation is well posed even when the parameter measure varies with the forward time $t$.
%Since both the Cauchy--Lipschitz theory for ODEs and the superposition principle remain valid for
%measurable time-dependent vector fields that are uniformly Lipschitz in $x$.

It remains to establish the Lipschitz dependence on the parameter-measure $\rho$.
For $\rho,\rho'\in\mathcal P(\Theta)$,
\[
\begin{aligned}
|V_{\rho,\mu}(x)-V_{\rho',\mu}(x)|
&=
\left|
\int_\Theta A(x,\mu,\theta)\,
d(\rho-\rho')(\theta)
\right|.
\end{aligned}
\]
Since $\theta\mapsto A(x,\mu,\theta)$ is uniformly Lipschitz on $\Theta$
(an argument identical to that of Lemma~3.1(ii), differentiating with respect to $\theta$
instead of $x$), the Kantorovich--Rubinstein duality yields
\[
|V_{\rho,\mu}(x)-V_{\rho',\mu}(x)|
\leq
C_1'W_1(\rho,\rho').
\]
Combining this estimate with the Lipschitz continuity of $V_{\rho,\mu}$ in $(x,\mu)$ from
Lemma~3.1(ii)--(iii), the same stability argument as in the proof of
Theorem~4.1 gives
\[
\frac{d}{dt}W_2(\mu_t,\mu_t')
\le
LW_2(\mu_t,\mu_t')
+
C_1'W_1(\rho_t,\rho_t'),
\]
with $L$  independent of $\rho$.
Applying Gr\"onwall's inequality on $[0,T]$ yields
\[
W_2(\mu(T,\cdot,s),\mu(T,\cdot,s'))
\le
C
\sup_{0\le t\le T}
W_1(\rho_s(t),\rho_{s'}(t)),
\]
 proving the desired Lipschitz dependence.
\end{proof}

\section{Training Dynamics: Optimality System and Gradient Representation}
\label{sec:optimality}

We  derive the adjoint  equation and the gradient  for $\delta\mathcal L/\delta\rho$, as a  first-order optimality system for the mean-field control problem
\begin{align*}
 \min_\rho\ \mathcal L(\rho) :& = \mathcal J(\mu(T,\cdot)) \\
 \quad \text{subject to}\quad  & \partial_t\mu + \nabla_x\cdot(\mu\, b) = 0, \;  \mu(0,\cdot)=\mu_0, \;
b=V_{\rho,\mu} +F. 
\end{align*} 
%% where  $\mathcal H$ collects any running-cost dependence of the training objective on $\mu_t$

\begin{proposition}[First-order optimality system]
\label{prop:pmp}
Let Assumptions~\ref{ass:compact-theta}--\ref{ass:loss} hold and let $\rho\mapsto \mu(\cdot,\cdot)$ be the solution map of Corollary~\ref{cor:forward-wp}. Assume that $\mathcal L$ is Gateaux-differentiable at $\rho.$ Then, the linear functional derivative of $\mathcal L$ at $\rho$ in the direction of a signed perturbation $\xi=\rho'-\rho$ is
\begin{equation}
\frac{d}{d\varepsilon}\Big|_{\varepsilon=0} \mathcal L\big((1-\varepsilon)\rho+\varepsilon\rho'\big) = \int_\Theta \Big(\int_0^T\!\!\int_{\R^d} \nabla_x p(t,x)\cdot  A(x,\mu_t,\theta)\, d\mu_t(x)\,dt\Big)\cdot d(\rho'-\rho)(\theta),
\label{eq:gateaux}
\end{equation}
where $p(t,x)$ solves the linear transport equation
\begin{equation}
-\partial_t p - \big(V_{\rho,\mu_t}(x)+F(x)\big)\cdot\nabla_x p - (D_\mu V_{\rho,\mu_t}(x))\cdot \nabla_x p \mu = 0,
\quad p(T,x) = \frac{\delta\mathcal J}{\delta\mu}(\mu_T)(x).
\label{eq:adjoint}
\end{equation}
 Consequently, the desired gradient formula is given by 
\begin{equation}
\frac{\delta\mathcal L}{\delta\rho}(\theta) = \int_0^T\!\!\int_{\R^d}  \nabla_x p(t,x)\cdot A(x,\mu_t,\theta)\,d\mu_t(x)\,dt.
\label{eq:grad-rho}
\end{equation}
\end{proposition}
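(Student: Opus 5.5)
The plan is a Lagrangian (adjoint) computation made rigorous by linearizing the forward equation along the convex interpolation $\rho_\varepsilon:=(1-\varepsilon)\rho+\varepsilon\rho'$. Write $\mu^\varepsilon$ for the solution of \eqref{eq:forward-coupled} driven by $\rho_\varepsilon$, which is well defined by Corollary~\ref{cor:forward-wp}. Since $V_{\rho,\mu}$ is linear in $\rho$, we have $V_{\rho_\varepsilon,\mu}=V_{\rho,\mu}+\varepsilon\int_\Theta A(\cdot,\mu,\theta)\,d\xi(\theta)$ with $\xi=\rho'-\rho$. Moreover, $\Wass_1(\rho_\varepsilon,\rho)\le C\varepsilon$ because $\Theta$ is compact, so Corollary~\ref{cor:forward-wp} gives $\sup_t\Wass_2(\mu^\varepsilon_t,\mu_t)\le C\varepsilon$. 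By Lemma~\ref{lem:apriori}, all measures stay in $B_{R(T)}$ uniformly in $\varepsilon$, so the bounds of Lemma~\ref{lem:kernel-lip} apply throughout.

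\emph{Step 1 (linearized state).} We identify the first variation $\nu_t:=\lim_{\varepsilon\to0}(\mu^\varepsilon_t-\mu_t)/\varepsilon$ as a distribution, namely the solution of the linearized continuity equation
\[
\partial_t\nu+\nabla_x\cdot\big(\nu\,b_t\big)+\nabla_x\cdot\Big(\mu_t\,\big[D_\mu V_{\rho,\mu_t}\nu_t\big]\Big)=-\nabla_x\cdot\Big(\mu_t\int_\Theta A(\cdot,\mu_t,\theta)\,d\xi(\theta)\Big),\qquad \nu_0=0,
\]
where $b_t=V_{\rho,\mu_t}+F$. To justify the limit, we go to characteristics. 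Let $X^\varepsilon_t=\Phi^{\mu^\varepsilon}_{0,t}(x_0)$. The difference quotient $(X^\varepsilon_t-X_t)/\varepsilon$ satisfies a linear ODE whose coefficients converge, by the Lipschitz bounds of Lemma~\ref{lem:kernel-lip} together with the assumed Gateaux differentiability, which supplies differentiability of $\mu\mapsto A(x,\mu,\theta)$ along the path. We then pass to the limit by Gr\"onwall and dominated convergence, using that everything is confined to the ball.

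\emph{Step 2 (chain rule at the terminal time).} By Assumption~\ref{ass:loss}, the first variation $\delta\mathcal J/\delta\mu$ is bounded and Lipschitz. Hence
\[
\frac{d}{d\varepsilon}\Big|_{0}\mathcal J(\mu^\varepsilon_T)=\int \frac{\delta\mathcal J}{\delta\mu}(\mu_T)\,d\nu_T=\int p(T,x)\,d\nu_T .
\]
This follows from the standard representation $\mathcal J(\mu^\varepsilon_T)-\mathcal J(\mu_T)=\int_0^1\int\frac{\delta\mathcal J}{\delta\mu}(\mu_T+\tau(\mu^\varepsilon_T-\mu_T))\,d(\mu^\varepsilon_T-\mu_T)\,d\tau$. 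The $\Wass_2$-Lipschitz bound then lets us replace the intermediate measure by $\mu_T$ at cost $o(\varepsilon)$.

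\emph{Step 3 (duality).} Let $p$ solve the backward equation \eqref{eq:adjoint}. Its well-posedness follows from the method of characteristics along the flow of $b_t$: the nonlocal term is linear in $p$ and bounded by Lemma~\ref{lem:kernel-lip}, so a Picard iteration in $C([0,T];W^{1,\infty}(B_{R(T)}))$ applies. We then compute $\frac{d}{dt}\int p(t,\cdot)\,d\nu_t$, testing the linearized equation against $p$. The transport and $D_\mu V$ terms cancel exactly against those in \eqref{eq:adjoint}; this cancellation is precisely how the adjoint nonlocal term is defined, via the Fubini swap $\int\nabla_xp\cdot(D_\mu V\,\nu)\,d\mu=\int (D_\mu V)^\top\nabla_xp\,\mu\,d\nu$. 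What remains is the source term,
\[
\frac{d}{dt}\int p\,d\nu_t=\int\nabla_xp(t,x)\cdot\int_\Theta A(x,\mu_t,\theta)\,d\xi(\theta)\,d\mu_t(x).
\]
Integrating this identity over $[0,T]$, using $\nu_0=0$ and Fubini in $\theta$ (legitimate since $A$ is bounded and $\Theta$ is compact), gives \eqref{eq:gateaux}. Formula \eqref{eq:grad-rho} is then read off from the definition of the linear functional derivative, which is unique up to an additive constant that vanishes against $\xi$ of zero mass.

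\emph{Main obstacle.} The difficult part is Step 1, together with the regularity of $p$ needed in Step 3. Testing against $p$ requires $\nabla_xp$ to be Lipschitz, or at least continuous, while $\nu_t$ is only a distribution of order one. I would handle this by working at the level of characteristics: define $\nu$ through its action $\langle\nu_t,\varphi\rangle=\mathbb E[\nabla\varphi(X_t)\cdot Y_t]$, where $Y_t$ is the tangent process. The duality then reduces to $\frac{d}{dt}\mathbb E[\nabla_xp(t,X_t)\cdot Y_t]$, which needs only $p\in C^1$ with $\nabla_xp$ continuous. That regularity follows from the $C^1$ terminal data, obtained by mollifying $\delta\mathcal J/\delta\mu$ if necessary and passing to the limit at the end, and from the $C^1$ smoothness of the attention kernel in $x$.
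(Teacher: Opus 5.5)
\paragraph{Comparison with the paper's route.} Your route differs from the paper's. The paper only carries out the formal Pontryagin--Lagrange computation:
\begin{itemize}
\item it adjoins the continuity equation with a multiplier $p$ and integrates by parts;
\item it \emph{chooses} $p$ so that every term containing the variation of $\mu$ drops out, and reads off the remaining $\rho$-variation;
\item it remarks that the backward equation is solvable by characteristics after reversing time.
\end{itemize}
It never shows that the variation of $\mu_t$ exists, or that stationarity of the Lagrangian really computes $\frac{d}{d\varepsilon}\mathcal L(\rho_\varepsilon)$.

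Your tangent-equation-plus-duality argument is the rigorous counterpart of that computation. You use the linearized continuity equation, identify $\frac{d}{dt}\int p\,d\nu_t$ with the source term, and pass to a costate/tangent-process version to avoid testing a distribution against a non-smooth function. Your Fubini cancellation is correct. So is your terminal sign $p(T,\cdot)=+\frac{\delta\mathcal J}{\delta\mu}(\mu_T)$; the paper's proof writes a minus sign there, which is inconsistent with the statement.

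One misattribution: differentiability of $\mu\mapsto A(x,\mu,\theta)$ does not come from the assumed Gateaux differentiability of $\mathcal L$. It comes from the explicit softmax formula, which is a ratio of two linear functionals of $\mu$ with smooth integrands and a denominator bounded below on $B_{R(T)}$.

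\paragraph{The gap: Step 2.} Under Assumption~\ref{ass:loss}, set $g_\tau:=\frac{\delta\mathcal J}{\delta\mu}(m_\tau)-\frac{\delta\mathcal J}{\delta\mu}(\mu_T)$. It has sup norm $O(\varepsilon)$, but its Lipschitz constant is only bounded by $2L_{\mathcal J}$, which is not small. Since $\mu^\varepsilon_T$ and $\mu_T$ are coupled by displacements of size $O(\varepsilon)$, these bounds only give $\bigl|\int g_\tau\,d(\mu^\varepsilon_T-\mu_T)\bigr|=O(\varepsilon)$, not $o(\varepsilon)$.

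This is not a technicality. Take $\mathcal J(\mu)=\iint\min(|x-y|,1)\,d\mu(x)\,d\mu(y)$, whose first variation $2\int\min(|x-y|,1)\,d\mu(y)$ is bounded and Lipschitz in $(\mu,x)$, and take $\mu_0=\delta_{x_0}$.
\begin{itemize}
\item Then $\mu^\varepsilon_t=\delta_{x^\varepsilon(t)}$, so $\mathcal J(\mu^\varepsilon_T)\equiv0$ and $\mathcal L$ is Gateaux differentiable with derivative $0$.
\item Yet $\varepsilon^{-1}\int\frac{\delta\mathcal J}{\delta\mu}(\mu_T)\,d(\mu^\varepsilon_T-\mu_T)=2|x^\varepsilon(T)-x(T)|/\varepsilon\to2|Y_T|$ as $\varepsilon\downarrow0$, which is generically nonzero.
\item Worse, $p(T,\cdot)=2\min(|\cdot-x(T)|,1)$ is not differentiable at the support point $x(T)$ of $\mu_T$. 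So neither the nonlocal term of \eqref{eq:adjoint} nor the right-hand side of \eqref{eq:gateaux} is defined, since both integrate $\nabla_xp$ against $\mu_t$.
\end{itemize}
Hence your fallback ``mollify $\frac{\delta\mathcal J}{\delta\mu}$ and pass to the limit'' cannot work in general: there is no limiting formula to pass to.

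\paragraph{What is missing.} You need an extra hypothesis, for example that $\nabla_x\frac{\delta\mathcal J}{\delta\mu}(m)(x)$ exists and is jointly continuous in $(m,x)$ (Lions-type $C^1$ regularity). Under it, your Step 2 remainder is $o(\varepsilon)$ and $p(T,\cdot)\in C^1$.

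Likewise, your $C^1$ regularity of $p$ and the existence of the tangent process need $b=V_{\rho,\mu_t}+F$ to be $C^1$ in $x$, not only the attention kernel. Assumption~\ref{ass:ffn} allows $\sigma$ merely Lipschitz (ReLU), so you must also assume $\sigma\in C^1$, or drop $F$ as in \eqref{eq:forward-coupled}.

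The paper's formal argument tacitly uses the same regularity. With it added, your proof goes through and is more complete than the paper's.
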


% 
% (Fr\'echet/Wasserstein-differentiable, by Corollary~\ref{cor:forward-wp} and standard sensitivity results for flows of 
% ODEs/continuity equations) 
%
\begin{proof}This is the standard Pontryagin--Lagrange derivation for optimal control of the continuity equation, see for example~ \cite[Ch.~6]{carmona2018probabilistic} or \cite{e2019meanfield} for an analogous derivations in mean-field optimal control and deep learning, respectively. For completeness, we briefly outline the argument in the present setting. Introduce the Lagrangian
\[
\mathcal{L}_{ag} (\rho,\mu,p) 
=
\mathcal L (\rho)
-
\int_0^T
\int_{\mathbb{R}^d}
p(t,x)
\Big(
\partial_t\mu_t
+
\nabla_x\cdot
\big(
\mu_t b
\big)
\Big)
\,dx\,dt,
\]
where the continuity equation is imposed through the adjoint variable
$p$. Integrating by parts in both time and space, and using the no-flux
boundary condition, yields
\[
\begin{aligned}
\mathcal{L}_{ag}  
= & 
\mathcal L (\rho)
-\int p(T,\cdot)\,d\mu_T
+\int p(0,\cdot)\,d\mu_0
+
\int_0^T
\int
\Big(
\partial_t p
+
\nabla_x p\cdot b
\Big)
\,d\mu_t\,dt .
\end{aligned}
\]
The terminal condition
\[
p(T,\cdot)
=
-\frac{\delta J}{\delta\mu}(\mu_T)
\]
is chosen to cancel the first variation of the terminal cost  at $t=T$. We require that $p$ satisfy the adjoint
equation
\[
-\partial_t p
-
b \cdot\nabla_x p - (D_\mu b)\cdot \nabla_x p \mu = 0.
\]
Consequently, the  first variation of
$\mathcal{L}$ depends only through  the dependence of the velocity
field on the control parameter $\rho$
$$
\delta \mathcal{L}_{ag}=   \int_0^T
\int
\Big(
\nabla_x p\cdot D_\rho b
\Big)
\, \delta \rho d\mu_t\,dt .
$$
Since  $b=V_{\rho,\mu} +F$  
\[
V_{\rho,\mu_t}(x)
=
\int_\Theta
A(x,\mu_t,\theta)\,\rho(d\theta),
\]
is linear in $\rho,$ 
its Gâteaux derivative with respect to $\rho$ in the direction
$\rho'-\rho$ is given by 
\[
\left.
\langle D_\rho b, \rho'-\rho \rangle  =\frac{d}{d\varepsilon}
V_{(1-\varepsilon)\rho+\varepsilon\rho',\mu_t}(x)
\right|_{\varepsilon=0}
=
\int_\Theta
A(x,\mu_t,\theta)\,
d(\rho'-\rho)(\theta),
\]
Therefore, the variation of the reduced functional is given as 
stated in equation \eqref{eq:gateaux}.   We also have the Gateaux derivative of $b$ as 
$$
D_\mu b= \int_\Theta 
\delta_\mu A(x,\mu_t,\theta)\,\rho(d\theta).
$$
Finally, once the forward state $\mu_t$ has been computed, the
coefficients of the adjoint equation are known. By Lemma~3.1 and
Lemma~3.2, both
$V_{\rho,\mu_t}+F$
and
$D_\mu V_{\rho,\mu_t}$
are uniformly bounded and Lipschitz on
$B_{R(T)}\times[0,T]$.
Hence the backward adjoint equation is well posed by the same
superposition-principle argument used in Theorem~4.1, after applying as transformation in time as $t'=T-t$. 
\end{proof}

\begin{theorem}[Local well-posedness of the coupled forward--backward system]
\label{thm:fb-wp}
\iffalse 
Under Assumptions~\ref{ass:compact-theta}--\ref{ass:loss}, for every $\rho\in\Prob(\Theta)$ the forward--backward system \eqref{eq:forward-coupled}, \eqref{eq:adjoint} admits a unique solution $(\mu,p)\in C([0,T];\Prob_2(B_{R(T)}))\times C([0,T];\mathrm{Lip}(B_{R(T)};\R^d))$, obtained by first solving the forward equation for $\mu$ (Theorem~\ref{thm:token-wp}, since it does not depend on $p$) and then the linear backward equation for $p$ with the resulting, now fixed, coefficients. Consequently the gradient representation \eqref{eq:grad-rho} is a well-defined, single-valued function $\rho\mapsto \delta\mathcal L/\delta\rho(\cdot;\rho)\in C(\Theta)$, Lipschitz in $\rho$ w.r.t.\ $\Wass_1$ with a constant depending only on $D_\Theta,L_\sigma,L_\ell,L_{\mathcal J},T,R(T)$.
\fi 

Under Assumptions~\ref{ass:compact-theta}--\ref{ass:loss}, for every
$\rho\in\Prob(\Theta)$, the forward--backward system
\eqref{eq:forward-coupled}, \eqref{eq:adjoint}, admits a unique solution
\[
(\mu,p)
\in
C([0,T];\Prob_2(B_{R(T)}))
\times
C([0,T];\mathrm{Lip}(B_{R(T)};\mathbb R^d)).
\]
Consequently, the gradient representation \eqref{eq:grad-rho}  gives  a
well-defined single-valued map
\[
\rho
\longmapsto
\frac{\delta\mathcal L}{\delta\rho}(\cdot; \rho)
\in C(\Theta).
\]
Moreover, this map is Lipschitz continuous with respect to the
$\Wass_1$ distance; namely, for any
$\rho,\rho'\in\Prob(\Theta)$,
\[
\left\|
\frac{\delta\mathcal L}{\delta\rho}(\cdot;\rho)
-
\frac{\delta\mathcal L}{\delta\rho}(\cdot;\rho')
\right\|_{C(\Theta)}
\le
C
\Wass_1(\rho,\rho'),
\]
where the constant $C$ depends only on
$D_\Theta,L_\sigma,L_\ell,L_{\mathcal J},T,$ and $R(T)$.
\end{theorem}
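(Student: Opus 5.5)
The system is triangular, so I will solve it in two decoupled steps and then prove stability. First, the forward equation \eqref{eq:forward-coupled} does not involve $p$. Corollary~\ref{cor:forward-wp} (equivalently, Theorem~\ref{thm:token-wp} with $\rho$ frozen) therefore gives a unique $\mu\in C([0,T];\Prob_2(B_{R(T)}))$, confined to $B_{R(T)}$ by Lemma~\ref{lem:apriori}. With $\mu$ fixed, \eqref{eq:adjoint} becomes a \emph{linear} backward equation for $p$. I will reverse time, $t'=T-t$, and write the adjoint in mild form along the characteristics $\Phi_{t,T}$ of $b=V_{\rho,\mu_t}+F$. This gives
\[
p(t,x)=\tfrac{\delta\mathcal J}{\delta\mu}(\mu_T)(\Phi_{t,T}(x))+\int_t^T S[\nabla_x p(r,\cdot),\mu_r](\Phi_{t,r}(x))\,dr ,
\]
where $S$ denotes the nonlocal source term $\int (D_\mu V_{\rho,\mu_r})^\top\nabla_x p\,d\mu_r$. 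This term is linear and bounded in $\nabla_x p$, with a kernel that Lemma~\ref{lem:kernel-lip}(iii) controls through the first variation of $A$ in $\mu$.

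Next I will run a Banach fixed point in $C([0,T];\mathrm{Lip}(B_{R(T)}))$, using a weighted norm $\sup_t e^{-\kappa(T-t)}\|\cdot\|_{\mathrm{Lip}}$, exactly as in the proof of Theorem~\ref{thm:token-wp}. The terminal datum is $L_{\mathcal J}$-Lipschitz in $x$ by Assumption~\ref{ass:loss}, and the flow is $e^{L_1T}$-Lipschitz. For large $\kappa$ this yields a unique solution, together with a bound $\|\nabla_x p\|_\infty\le C(D_\Theta,L_\sigma,L_{\mathcal J},T,R(T))$ that does not depend on $\rho$. Substituting into \eqref{eq:grad-rho} and using Lemma~\ref{lem:kernel-lip}(i), together with the $\theta$-Lipschitz property of $A$ from Corollary~\ref{cor:forward-wp}, shows that $\delta\mathcal L/\delta\rho(\cdot;\rho)$ is a bounded, indeed Lipschitz, function on $\Theta$, so it lies in $C(\Theta)$. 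Uniqueness of $(\mu,p)$ follows from uniqueness in each step separately.

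For the Lipschitz estimate in $\rho$, let $(\mu,p)$ and $(\mu',p')$ correspond to $\rho$ and $\rho'$. I will split
\[
\tfrac{\delta\mathcal L}{\delta\rho}(\theta;\rho)-\tfrac{\delta\mathcal L}{\delta\rho}(\theta;\rho')=\int_0^T\!\!\int (\nabla p-\nabla p')\cdot A(x,\mu_t,\theta)\,d\mu_t\,dt+\int_0^T\!\!\Big[\int \nabla p'\cdot A(\cdot,\mu_t,\theta)\,d\mu_t-\int \nabla p'\cdot A(\cdot,\mu'_t,\theta)\,d\mu'_t\Big]dt .
\]
The second piece is bounded by $C\sup_t\Wass_1(\mu_t,\mu_t')$, using Kantorovich--Rubinstein and Lemma~\ref{lem:kernel-lip}(iii), provided $x\mapsto\nabla p'(t,x)\cdot A$ is Lipschitz. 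In turn, $\sup_t\Wass_1(\mu_t,\mu_t')\le C\Wass_1(\rho,\rho')$ by Corollary~\ref{cor:forward-wp}. The first piece needs a bound of the form $\sup_t\|\nabla p-\nabla p'\|_\infty\le C\Wass_1(\rho,\rho')$. I will obtain it by differentiating the mild formulation in $x$ and comparing the two fixed-point equations. The differences come from three sources: the terminal data, controlled by Assumption~\ref{ass:loss}; the flows $\Phi$ versus $\Phi'$, controlled by the characteristic Grönwall estimate of Theorem~\ref{thm:token-wp} together with $|V_{\rho,\mu}-V_{\rho',\mu'}|\le C(\Wass_1(\rho,\rho')+\Wass_2(\mu,\mu'))$; and the source terms. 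I then close with Grönwall backward in time.

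The main obstacle is this last step. It needs $C^{1,1}$-type regularity: $\nabla p$ must be Lipschitz in $x$, which requires $\nabla_x\Phi$ to be Lipschitz, hence $\nabla_x b$ Lipschitz. The terminal datum must also have a Lipschitz gradient. Neither is given directly by Lemma~\ref{lem:kernel-lip} or Assumption~\ref{ass:loss} as stated. I will try first to get second-derivative bounds on $A$ in $x$ by differentiating the softmax formula once more, which gives smooth bounds on $B_{R(T)}$ since $r$ is bilinear. For $F$ and the terminal datum I will read the $C^1$ and Lipschitz-gradient hypothesis on $\ell$ as providing $\nabla_x\frac{\delta\mathcal J}{\delta\mu}$ Lipschitz, and I will add $\sigma\in C^{1,1}$ if necessary. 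If this fails, the fallback is to measure $p-p'$ only in a weaker sense. Since the gradient formula integrates against $d\mu_t$, I can integrate by parts in $x$, moving the derivative onto $A\mu_t$, which is legitimate only in a smoothed or weak formulation. Then only $\|p-p'\|_\infty$ is needed, and that bound follows from Lipschitz data alone.
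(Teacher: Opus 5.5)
Your decomposition is the paper's: solve the forward equation first (Theorem~\ref{thm:token-wp}, Corollary~\ref{cor:forward-wp}), treat \eqref{eq:adjoint} as a linear backward transport equation along characteristics with Gr\"onwall bounds, and get the $\rho$-dependence from forward stability plus Gr\"onwall on the difference of the adjoint equations. The step you flag as the main obstacle is a genuine gap, and the paper's proof has the same one. Its last sentence passes from Lipschitz dependence of $p$ on $\rho$ to Lipschitz dependence of $\delta\mathcal L/\delta\rho$, although \eqref{eq:grad-rho} evaluates $\nabla_x p$ on $\Supp\mu_t$.

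Under Assumptions~\ref{ass:compact-theta}--\ref{ass:loss} alone the gap cannot be closed. Take $d=1$, $\Theta=[-1,1]^3$, $F\equiv0$, $\mu_0=\delta_1$, and $\mathcal J(\mu)=\int g\,d\mu$ with $g(x)=\min(|x-1|,1)$. Then $\frac{\delta\mathcal J}{\delta\mu}=g$ is bounded and $1$-Lipschitz. Softmax against a Dirac mass has weight $1$ for every $(Q,K)$. Hence $\mu_t=\delta_{x(t)}$, $A(x(t),\mu_t,\theta)=Vx(t)$, and $x(t)=e^{\bar Vt}$ with $\bar V=\int V\,d\rho$, so $\mathcal L(\rho)=g(e^{\bar VT})$. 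Wherever \eqref{eq:grad-rho} makes sense, it equals $g'(x(T))\,T\,x(T)\,V$. Now take $\rho_{\pm\varepsilon}=\delta_{(0,0,\pm\varepsilon)}$ with small $\varepsilon>0$. Then $\Wass_1(\rho_\varepsilon,\rho_{-\varepsilon})=2\varepsilon$, but the two gradients are $\pm Te^{\pm\varepsilon T}V$, at $C(\Theta)$-distance $T(e^{\varepsilon T}+e^{-\varepsilon T})\ge 2T$. At $\varepsilon=0$, $p(t,\cdot)=g$ has its kink on the trajectory $x\equiv1$, so \eqref{eq:grad-rho} is not even defined. The extra regularity you ask for is therefore necessary, not a convenience.

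It must be added as an explicit hypothesis, stated precisely. You need $x\mapsto\frac{\delta\mathcal J}{\delta\mu}(\mu)(x)$ of class $C^1$, with $\nabla_x\frac{\delta\mathcal J}{\delta\mu}$ Lipschitz \emph{jointly} in $(\mu,x)$. The $\mu$-part is what your terminal-data comparison uses, whereas Assumption~\ref{ass:loss} only bounds $\frac{\delta\mathcal J}{\delta\mu}(\mu_T)-\frac{\delta\mathcal J}{\delta\mu}(\mu_T')$ in sup norm. You also need $\sigma\in C^{1,1}$ to control $\nabla_x\Phi-\nabla_x\Phi'$. The remaining bounds do follow from smoothness of the softmax on $B_R$, with $d$-dependent constants: second-order bounds on $A$, and $\Wass_1$- and $\theta$-Lipschitz bounds on $\nabla_xA$ and on the $x$-gradient of the kernel $k$ of $S$.

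Your reading of the hypothesis on $\ell$ is legitimate only when $\mathcal J(\mu)=\ell(m(\mu),y)$ with $m(\mu)=\int x\,d\mu$, as in Assumption~\ref{ass:convex} and Theorem~\ref{thm:ntk-conv}. In that case $\frac{\delta\mathcal J}{\delta\mu}(\mu)(x)=\nabla\ell(m(\mu),y)\cdot x$, whose $x$-gradient is $L_\ell$-Lipschitz in $\Wass_1$. But Assumption~\ref{ass:loss} does not tie $\mathcal J$ to $\ell$.

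Drop the fallback, for three reasons:
\begin{enumerate}
\item In the example above $\mu_t$ is a Dirac mass, so there is nothing to integrate by parts against.
\item For absolutely continuous $\mu_0$ you would need $W^{1,1}$ or BV densities for $\mu_t$. Neither the statement nor a flow with merely $L^\infty$ Jacobian provides them.
\item Even the bound on $\|p-p'\|_\infty$ does not close with Lipschitz data. The difference of the nonlocal sources contains $\int(\nabla p-\nabla p')\cdot k\,d\mu_r$ and $\int\nabla p'\cdot k\,d(\mu_r-\mu_r')$, and these again need gradient control or a Lipschitz $\nabla p'$.
\end{enumerate}
The same issue affects your existence step. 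The source $S$ needs $\nabla p$ on $\Supp\mu_r$, which a merely Lipschitz $p$ does not supply when $\mu_r$ is singular. Run the fixed point in $C^1$ and propagate $C^{1,1}$ bounds, rather than working in $\mathrm{Lip}$.
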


\begin{proof}
\iffalse 
Because the state equation \eqref{eq:forward-coupled} does not depend on the adjoint $p$, the forward--backward system decouples (this is a consequence of the terminal-cost-only, no-running-cost structure of $\mathcal L$; in the presence of a genuine running cost $\mathcal H\not\equiv0$ depending on $\mu_t$ but not on $p$, the same decoupling still holds, since $p$ never feeds back into the $\mu$-equation): solve for $\mu$ via Theorem~\ref{thm:token-wp}, then solve the resulting linear, non-autonomous backward transport equation \eqref{eq:adjoint} for $p$, which is again a superposition-principle/characteristics argument exactly as in Theorem~\ref{thm:token-wp}, now for a \emph{linear} PDE (in $p$) with time-dependent but fixed, Lipschitz-in-$x$ and bounded coefficients (by Lemma~\ref{lem:kernel-lip}, \ref{lem:apriori}), for which existence, uniqueness and the stated Lipschitz bound (via Gr\"onwall, propagating the Lipschitz constant of the terminal data $\delta\mathcal J/\delta\mu(\mu_T)$ backward in time) are classical. Lipschitz dependence of $(\mu,p)$, hence of $\delta\mathcal L/\delta\rho$, on $\rho$ follows by composing the $\rho$-Lipschitz bound on $\mu$ from Corollary~\ref{cor:forward-wp} with a further Gr\"onwall argument for the (now $\rho$-perturbed) backward equation for $p$. 
\fi

Because the state equation \eqref{eq:forward-coupled} is independent of the adjoint variable $p$, the forward--backward system is  weakly coupled. Hence, 
the solution is obtained sequentially: first, the forward
equation admits a unique solution $\mu$ by Theorem~\ref{thm:token-wp},
since the state dynamics are independent of the adjoint variable $p$;
then, with $\mu$ fixed, the adjoint equation becomes a linear backward
transport equation with bounded and Lipschitz coefficients. 
By Lemmas~\ref{lem:kernel-lip} and \ref{lem:apriori}, these coefficients are uniformly bounded and Lipschitz in $x$. Standard characteristic (or superposition-principle) theory for linear transport equations therefore yields existence and uniqueness of the adjoint solution. Moreover, applying Gr\"onwall's inequality along characteristics propagates the Lipschitz regularity of the terminal condition
\[
p(T,\cdot)=\frac{\delta\mathcal J}{\delta\mu}(\mu_T),
\]
and establishes the stated Lipschitz estimate for $p$.

Finally, the Lipschitz dependence of $(\mu,p)$, and hence of
\(
\delta\mathcal L/\delta\rho,
\)
with respect to the control $\rho$ follows by combining the forward stability estimate of Corollary~\ref{cor:forward-wp} with a Gr\"onwall argument applied to the difference of the corresponding adjoint equations.
\end{proof}

\begin{remark}\iffalse 
We have stated Theorem~\ref{thm:fb-wp} for a single, frozen $\rho$ (as needed to define the instantaneous training velocity field $u=-\nabla_\theta\delta\mathcal L/\delta\rho$ at each training time $s$); this is exactly what is required to make sense of the right-hand side of \eqref{eq:mfld} at each $s$, and is the input to the well-posedness theory of the full training dynamics in Section~\ref{sec:training-wp}. In problems with a genuine running cost that itself depends on $p$ (e.g.\ control-effort regularization), the forward--backward system no longer decouples and one generally only obtains \emph{local-in-$T$} well-posedness via a further fixed-point/small-horizon argument, exactly as in the classical mean-field FBSDE theory \cite[Ch.~6]{carmona2018probabilistic}; this refinement is not needed for the purely terminal-cost setting of the present paper.
\fi 

Theorem~6.2 is stated for a fixed (frozen) $\rho$, which is sufficient for defining the
instantaneous training velocity
\[
u=-\nabla_\theta\frac{\delta\mathcal L}{\delta\rho}
\]
at each training time $s$. This frozen-control analysis provides the well-posedness of the
right-hand side of the training dynamics (13), and serves as the key ingredient for the
well-posedness analysis of the full evolution system in Section~7.

%We note that this decoupling relies on the terminal-cost-only structure considered here.
%If a running cost involving the adjoint variable $p$ (for example, a control-effort
%regularization coupled to the state) is introduced, the forward and backward equations become
%fully coupled. In that case, well-posedness typically requires an additional fixed-point
%argument on a sufficiently small time interval, leading in general to only local-in-time
%existence, as in the classical mean-field FBSDE framework
%\cite[Chapter~6]{carmona2018probabilistic}. Such an extension is beyond the scope of the present work and is not
%needed for the purely terminal-cost formulation considered here.
\end{remark}

\section{Well-Posedness of the Parameter (Training) Dynamics}
\label{sec:training-wp}

We  consider equation~\eqref{eq:mfld}, the nonlinear nonlocal Fokker--Planck
equation describing the evolution of the parameter distribution $\rho_s$.
In contrast to the forward system, where the parameter space is
restricted to the compact set in Assumption~\ref{ass:compact-theta}, we now
work on the full space $\Theta=\mathbb R^p$. To prevent escape of mass to
infinity and ensure the required moment bounds, we introduce a standard
quadratic confinement term,  which corresponds to $l^2$ weight decay and is commonly interpreted as Tikhonov regularization.

\begin{assumption}[Confinement and regularity]
\label{ass:confine}
The loss functional in \eqref{eq:mfld} is replaced by
\[
\mathcal L_\lambda(\rho)
=
\mathcal L(\rho)
+
\frac{\lambda}{2}
\int_{\mathbb R^p}|\theta|^2\,d\rho(\theta),
\quad \lambda>0.
\]
Moreover, uniformly for $\mu$ supported in $B_{R(T)}$, the map
\[
\theta\mapsto
\nabla_\theta
\frac{\delta\mathcal L}{\delta\rho}(\theta;\rho)
\]
is $L^*$-Lipschitz and has at most linear growth in $\theta$. In view of the representation (\ref{eq:grad-rho}), 
these properties follow from the corresponding uniform estimates on the parameter dependence of the interaction kernel,
%Equivalently,
%the interaction kernel satisfies
\[
\left|
\nabla_\theta A(x,\mu,\theta)
-
\nabla_\theta A(x,\mu,\theta')
\right|
\le L_A|\theta-\theta'|,
\]
and
\[
|\nabla_\theta A(x,\mu,\theta)|
\le C_A(1+|\theta|),
\]
together with the uniform bound on $\nabla_x p$, so that $L^*\leq TC_pL_A$.  
\end{assumption}
These conditions hold, for example, when $A(x,\mu,\theta)$ is twice
continuously differentiable in $\theta$ with uniformly bounded second derivatives on bounded sets, as in the softmax-attention kernel
\eqref{eq:kernel}--\eqref{eq:V}.

\begin{theorem}[Existence and uniqueness of the mean-field Langevin dynamics]
\label{thm:mfld-wp}
Under Assumptions~\ref{ass:compact-theta}--\ref{ass:confine}, for every
initial distribution
\[
\rho_0\in\Prob_2(\mathbb R^p)
\]
with finite entropy, the nonlinear Fokker--Planck equation
\eqref{eq:mfld}, with $\mathcal L$ replaced by $\mathcal L_\lambda$, admits a
unique weak solution
\[
\rho\in C([0,\infty);\Prob_2(\mathbb R^p)).
\]
Moreover, the solution satisfies the uniform second-moment bound
\[
\sup_{s\geq 0 }
\int_{\mathbb R^p}|\theta|^2\,d\rho_s(\theta) < \infty,
\]
and the entropy remains finite on every finite time interval.

Equivalently, $\rho_s=\Law(\Theta_s)$ is the law of the unique solution to the
McKean--Vlasov stochastic differential equation
\begin{equation} 
\label{sde} 
d\Theta_s
=
-\nabla_\theta
\frac{\delta\mathcal L_\lambda}{\delta\rho}
(\Theta_s;\rho_s)\,ds
+
\sqrt{2\beta}\,dW_s,
\qquad
\Law(\Theta_0)=\rho_0 .
\end{equation} 
Here $ W_s$ is a standard $p$-dimensional Brownian motion. 
\end{theorem}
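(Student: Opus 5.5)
The plan is to work at the level of the McKean--Vlasov SDE \eqref{sde} and obtain $\rho_s$ as a fixed point of a law-to-law map, in the Sznitman style. Stability inputs come from Theorem~\ref{thm:fb-wp}; confinement comes from Assumption~\ref{ass:confine}. Write the drift as
\[
b(\theta,\rho):=-\nabla_\theta\tfrac{\delta\mathcal L}{\delta\rho}(\theta;\rho)-\lambda\theta .
\]
We need two properties of $b$:
\begin{itemize}
\item[(a)] $\theta\mapsto b(\theta,\rho)$ is Lipschitz with constant $L^*+\lambda$, uniformly in $\rho$, and has linear growth (Assumption~\ref{ass:confine});
\item[(b)] $\sup_\theta|b(\theta,\rho)-b(\theta,\rho')|\le C\,\Wass_1(\rho,\rho')\le C\,\Wass_2(\rho,\rho')$.
\end{itemize}
Property (b) is the gradient-level analogue of Theorem~\ref{thm:fb-wp}. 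I would prove it by differentiating the representation \eqref{eq:grad-rho} in $\theta$:
\[
\nabla_\theta\tfrac{\delta\mathcal L}{\delta\rho}=\int_0^T\!\!\int \nabla_x p\cdot\nabla_\theta A(x,\mu_t,\theta)\,d\mu_t\,dt .
\]
Then I would use the Lipschitz dependence of $\mu$ on $\rho$ (Corollary~\ref{cor:forward-wp}) and of $\nabla_x p$ on $\rho$. The latter comes from the Gr\"onwall argument on the adjoint difference in Theorem~\ref{thm:fb-wp}, with $\nabla_x p$ bounded uniformly by $C_p$.

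A subtlety is that the forward results assumed compact $\Theta$, whereas here $\rho$ lives on $\R^p$. So (b) needs either a truncation of the heads or a check that the forward constants depend only on a moment bound of $\rho$. I would use the moment route. Since $A$ is built from $(Q,K,V)$ and grows polynomially in $\theta$, I would restrict the fixed-point argument to the set
\[
\mathcal P_{2,M}:=\{\rho:\textstyle\int|\theta|^2d\rho\le M\}
\]
and check that the confinement radius $R(T)$ and Lipschitz constants depend only on $M$. I expect this reconciliation of the compact-$\Theta$ forward theory with unbounded parameters to be the main obstacle. If the moment route fails, the fallback is to interpret Assumption~\ref{ass:confine} literally as uniform-in-$\mu$ bounds and take $R(T)$ fixed.

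Given (a) and (b), fix $S>0$ and the space $C([0,S];\Prob_2(\R^p))$. For a path $\nu$, solve the linear SDE
\[
d\Theta^\nu=b(\Theta^\nu,\nu_s)\,ds+\sqrt{2\beta}\,dW_s .
\]
It is well posed by classical Lipschitz SDE theory. Set $\Gamma(\nu)_s=\Law(\Theta^\nu_s)$. Couple two solutions synchronously, with the same $W$ and the same $\Theta_0$. Then
\[
\mathbb E|\Theta^\nu_s-\Theta^{\nu'}_s|^2\le C\int_0^s\big(\mathbb E|\Theta^\nu_r-\Theta^{\nu'}_r|^2+\Wass_2^2(\nu_r,\nu'_r)\big)dr ,
\]
so $\Gamma$ contracts in the exponentially weighted metric, exactly as in the proof of Theorem~\ref{thm:token-wp}. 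This gives a unique strong solution on $[0,S]$ for every $S$, hence global existence. By It\^o's formula, $\rho_s$ is a weak solution of \eqref{eq:mfld}. Uniqueness of weak solutions of the Fokker--Planck equation follows by freezing the drift $b(\cdot,\rho_s)$ of a given weak solution: the resulting linear Fokker--Planck equation has Lipschitz coefficients, so its solution is the law of the linear SDE by the superposition principle for SDEs (Figalli/Trevisan). That law is then a fixed point of $\Gamma$ and is therefore unique.

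For the moment bound, apply It\^o to $|\Theta_s|^2$. This gives
\[
\tfrac{d}{ds}\mathbb E|\Theta_s|^2=2\mathbb E\big[\Theta_s\cdot b\big]+2\beta p .
\]
If $\nabla_\theta\delta\mathcal L/\delta\rho$ grows strictly slower than $\lambda|\theta|$ (or with slope below $\lambda$), then $\tfrac{d}{ds}m\le -\lambda m+C$, and $\sup_s m_s\le\max(m_0,C/\lambda)$. This also makes the invariant set $\mathcal P_{2,M}$ consistent with the moment-dependent constants above. Finiteness of entropy on finite intervals follows from the entropy dissipation identity for \eqref{eq:mfld}:
\[
\Ent(\rho_s)\le\Ent(\rho_0)+\int_0^s\!\!\int|\nabla\cdot b|\,d\rho_r\,dr\le \Ent(\rho_0)+p(L^*+\lambda)s ,
\]
since $\nabla\cdot b$ is bounded by the Lipschitz constant.
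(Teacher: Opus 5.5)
Your overall architecture is a sound alternative to the paper's. The paper freezes $\rho^{\rm in}$ in the linear Fokker--Planck equation and gets a contraction only on a short interval. It then continues the solution with a priori bounds (It\^o for the second moment, Lemma~\ref{lem:dissipation} for the entropy), deferring to \cite{hu2021meanfield}.

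You instead couple the SDEs synchronously and use an exponentially weighted metric. That contracts on every $[0,S]$ at once, so no continuation step is needed. Working at the SDE level, you must pass uniqueness to weak PDE solutions via superposition, and you do. The paper gets PDE uniqueness directly and leaves the SDE representation implicit. Your finite-interval entropy bound via $\tfrac{d}{ds}\Ent(\rho_s)=-\int\nabla_\theta\!\cdot b\,d\rho_s-\beta\int|\nabla_\theta\log\rho_s|^2d\rho_s$ is also correct and independent of the free energy.

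The price of the one-shot global contraction is that (a)--(b) must hold with $\rho$-independent constants. Your moment route through $\mathcal P_{2,M}$ will not deliver that for softmax heads. The constants of Lemma~\ref{lem:kernel-lip} scale per head like $|Q||K||V|$ (in $x$) and contain $e^{2|Q||K|R^2/\sqrt d}$ (in $\mu$). Integrating against $\rho$ therefore needs third and Gaussian-type moments, not second ones. Your fallback, reading Assumption~\ref{ass:confine} and Theorem~\ref{thm:fb-wp} as $\rho$-uniform, is what the paper tacitly does. Two smaller points on (b):
\begin{itemize}
\item With $|\nabla_\theta A|\le C_A(1+|\theta|)$ you should expect $C(1+|\theta|)\Wass_1(\rho,\rho')$ rather than a $\theta$-uniform bound. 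This is harmless, since $\mathbb E(1+|\Theta_r|)^2$ is bounded on $[0,S]$.
\item The gradient-level estimate needs $\nabla_x p$ Lipschitz in $x$ and $\nabla_\theta A$ Lipschitz in $(x,\mu)$. That is one derivative more than Theorem~\ref{thm:fb-wp} provides, and the paper simply asserts it.
\end{itemize}

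\textbf{The gap: the uniform-in-time moment bound.} The step that does not follow from the stated hypotheses is $\sup_{s\ge0}\int|\theta|^2d\rho_s<\infty$. You make it conditional on the linear-growth slope of $\nabla_\theta\frac{\delta\mathcal L}{\delta\rho}$ (the constant $C_AC_pT$ coming from Assumption~\ref{ass:confine}) being below $\lambda$. Nothing in Assumptions~\ref{ass:compact-theta}--\ref{ass:confine} relates the two. Without it, It\^o plus Gr\"onwall gives only a bound $C(S)$ on $[0,S]$. That is also all the paper's It\^o step yields, and its remark that the confinement ``dominates for sufficiently large $|\theta|$'' tacitly uses the same inequality.

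The uniform bound comes instead from the gradient-flow structure, which is the point of invoking Lemma~\ref{lem:dissipation}. Three facts combine:
\begin{itemize}
\item Finite entropy and second moment of $\rho_0$ give $\mathcal F_\beta(\rho_0)<\infty$, and the dissipation identity gives $\mathcal F_\beta(\rho_s)\le\mathcal F_\beta(\rho_0)$.
\item $\mathcal L$ is bounded below; indeed it is bounded, since $\delta\mathcal J/\delta\mu$ is bounded by Assumption~\ref{ass:loss}.
\item Comparison with the Gaussian density proportional to $e^{-\gamma|\theta|^2}$, $\gamma=\lambda/(4\beta)$, gives $\beta\Ent(\rho)\ge-\frac{\lambda}{4}\int|\theta|^2d\rho-\frac{\beta p}{2}\log\frac{4\pi\beta}{\lambda}$.
\end{itemize}
Hence, for all $s\ge0$,
\[
\frac{\lambda}{4}\int|\theta|^2\,d\rho_s\le\mathcal F_\beta(\rho_0)-\inf\mathcal L+\frac{\beta p}{2}\log\frac{4\pi\beta}{\lambda},
\]
with no condition on $\lambda$. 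The same inequality bounds $\Ent(\rho_s)$ from above uniformly in $s$. For $\beta=0$, read the bound off from $\mathcal L_\lambda(\rho_s)\le\mathcal L_\lambda(\rho_0)$. You still need the energy inequality rigorously for your solution, e.g.\ from \cite{hu2021meanfield}, but that is a citation rather than an extra hypothesis.
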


\begin{proof}

By Assumption~\ref{ass:confine} and Theorem~\ref{thm:fb-wp}, the drift
\[
b(\theta,\rho)
:=
-\nabla_\theta\!\left(\frac{\delta\mathcal L}{\delta\rho}(\theta;\rho)\right)
-\lambda\theta
\]
is, for each fixed $\rho$, $(L^*+\lambda)$-Lipschitz in $\theta$. Moreover, Theorem~\ref{thm:fb-wp} implies that $b$ is Lipschitz with respect to $\rho$ in the $\Wass_1$ metric, uniformly for $\theta$ in bounded sets. Finally, the confining term $-\lambda\theta$ ensures dissipativity:
\[
\langle b(\theta,\rho),\theta\rangle
\le
C_1 (1+|\theta)| |\theta|-\lambda|\theta|^2, \quad C_1:=C_AC_pT, 
\]
so that the quadratic confinement dominates for sufficiently large $|\theta|$.

These properties constitute the standard hypotheses for the well-posedness of McKean--Vlasov SDEs of the form \eqref{sde}; see, for example,
\cite[Theorem~1.1]{carmona2018probabilistic} or the nonlinear Fokker--Planck framework of \cite{carrillo2003kinetic}. To sketch the argument, fix a  curve
$\rho^{\rm in}\in C([0,S];\Prob_2(\R^p))$
and consider the linear Fokker--Planck equation
\[
\partial_s\nu
=
\nabla_\theta\!\cdot
\left(
\nu
\nabla_\theta
\frac{\delta L_\lambda}{\delta \rho}
(\theta; \rho_s^{\rm in})
\right)
+
\beta\Delta_\theta\nu.
\]
Since the coefficients are Lipschitz and dissipative, this equation is well posed by classical linear Fokker--Planck theory. The corresponding solution map
$\rho^{\rm in}\mapsto\nu$
is a contraction on
$C([0,S];\Prob_2(\R^p))$
for sufficiently small $S$, yielding a unique local solution by Banach's fixed-point theorem.

To extend the solution globally, we derive uniform a priori estimates. Applying It\^o's formula to $|\Theta_s|^2$ to obtain 
$$
d |\Theta_s|^2=2\langle \Theta_s, b(\Theta_s, \rho_s)\rangle +2\sqrt{2\beta} \langle \Theta_s, dW_s \rangle +2\beta pds, 
$$
and using the dissipativity of $b$ gives
\[
\sup_{0\le s\le S}
\mathbb E|\Theta_s|^2
\le C(S),
\]
via Gr\"onwall's inequality. In addition, the entropy-dissipation identity established in Lemma~\ref{lem:dissipation} provides a uniform entropy bound. Together, these estimates prevent finite-time blow-up of the second moment and entropy, allowing the local solution to be continued indefinitely. Hence the solution extends uniquely to all
$s\ge0$,  as in the mean-field Langevin well-posedness theory of
\cite[Theorem~2.4]{hu2021meanfield}.
\end{proof}

\section{Global Convergence of Training}
\label{sec:convergence}

We now link the well-posedness theory above to optimization, proving two convergence results that together delineate what is currently provable for mean-field transformer training: an exact, global, exponential-rate result for a \emph{single mean-field attention layer} (no depth composition), and a local, linear-rate result for the genuinely \emph{deep, compositional} model \eqref{eq:forward-coupled}.

\subsection{Energy dissipation}

\begin{lemma}[Dissipation identity]
\label{lem:dissipation}
Along any solution of \eqref{eq:mfld} given by Theorem~\ref{thm:mfld-wp},
\begin{equation}
\frac{d}{ds}\mathcal F_\beta(\rho_s) = -\int_\Theta \Big|\nabla_\theta\frac{\delta\mathcal F_\beta}{\delta\rho}(\theta,\rho_s)\Big|^2\,d\rho_s(\theta) \le 0,
\label{eq:dissipation}
\end{equation}
where $\mathcal F_\beta$ is the regularized objective \eqref{eq:Fbeta}  with $\mathcal L$ or  $\mathcal L_\lambda$, respectively.
\end{lemma}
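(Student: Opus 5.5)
The identity is the formal chain rule for a Wasserstein gradient flow: differentiate $s\mapsto\mathcal F_\beta(\rho_s)$ through its first variation and use the PDE \eqref{eq:mfld} in the form $\partial_s\rho=\nabla_\theta\cdot(\rho\nabla_\theta\tfrac{\delta\mathcal F_\beta}{\delta\rho})$. The plan is to split $\mathcal F_\beta=\mathcal L_\lambda+\beta\Ent$, handle the two pieces separately, and then add them.

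\emph{Step 1: the loss part.} By Theorem~\ref{thm:fb-wp}, $\rho\mapsto\frac{\delta\mathcal L}{\delta\rho}(\cdot;\rho)$ is well defined and $\Wass_1$-Lipschitz. Hence along the curve $\rho_s$ (continuous in $\Wass_2$ by Theorem~\ref{thm:mfld-wp}) one has
\[
\mathcal L(\rho_{s+h})-\mathcal L(\rho_s)=\int_0^1\!\int \frac{\delta\mathcal L}{\delta\rho}(\theta;\rho_s+\tau(\rho_{s+h}-\rho_s))\,d(\rho_{s+h}-\rho_s)\,d\tau .
\]
Dividing by $h$ and using Lipschitz continuity in $\rho$, we get $\frac{d}{ds}\mathcal L(\rho_s)=\int \frac{\delta\mathcal L}{\delta\rho}(\theta;\rho_s)\,\partial_s\rho_s$. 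The quadratic term $\frac\lambda2\int|\theta|^2d\rho$ is linear in $\rho$, and its first variation is $\frac\lambda2|\theta|^2$. Testing the weak form of \eqref{eq:mfld} against $\varphi=\frac{\delta\mathcal L_\lambda}{\delta\rho}(\cdot;\rho_s)$ and integrating by parts gives
\[
-\int \nabla_\theta\tfrac{\delta\mathcal L_\lambda}{\delta\rho}\cdot\nabla_\theta\tfrac{\delta\mathcal F_\beta}{\delta\rho}\,d\rho_s .
\]
This test function is not compactly supported, but it has at most quadratic growth with Lipschitz gradient (Assumption~\ref{ass:confine}). A cutoff $\varphi\chi_R$ together with the uniform second-moment bound of Theorem~\ref{thm:mfld-wp} removes the boundary terms as $R\to\infty$.

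\emph{Step 2: the entropy part.} Formally $\frac{d}{ds}\Ent(\rho_s)=\int(\log\rho_s+1)\partial_s\rho_s=-\int\nabla\log\rho_s\cdot\nabla_\theta\tfrac{\delta\mathcal F_\beta}{\delta\rho}\,d\rho_s$. Adding $\beta$ times this to Step 1 yields exactly $-\int|\nabla_\theta\frac{\delta\mathcal F_\beta}{\delta\rho}|^2d\rho_s$.

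\emph{Justifying Step 2 (main obstacle).} The derivation needs $\log\rho_s$ to be an admissible test function and requires the relative Fisher information to be finite for a.e.\ $s$. I would first use parabolic regularity: for $\beta>0$ the drift is Lipschitz with linear growth, so $\rho_s$ has a smooth, positive density for $s>0$, with Gaussian-type tails coming from the confinement. I would then prove the identity in integrated form on $[s_0,s_1]$, via the following regularisation.
\begin{itemize}[label=\textbullet]
\item Replace the entropy by $\int\rho\log(\rho+\epsilon)$, or equivalently use the Ambrosio--Gigli--Savar\'e chain rule for the $\lambda$-convex functional $\Ent$ along absolutely continuous curves \cite[Thm.~10.4.9, Sec.~10.4]{ambrosio2008gradient}. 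The drift is not a gradient of a convex potential in $\rho$, but it is a bounded Lipschitz perturbation of the Ornstein--Uhlenbeck drift $-\lambda\theta$, so the chain rule still applies.
\item Use the a priori bound $\int_{s_0}^{s_1}\!\int|\nabla\log\rho|^2d\rho<\infty$, obtained by first integrating the inequality version of the identity, to pass $\epsilon\to0$ by monotone and dominated convergence.
\item Let $s_0\downarrow0$ using the finite initial entropy and lower semicontinuity.
\end{itemize}
Since both sides are then absolutely continuous in $s$, the integrated identity differentiates to the pointwise statement for a.e.\ $s$.

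\emph{Degenerate case.} When $\beta=0$, only Step 1 is needed and the integration-by-parts justification suffices. The sign $\le0$ is immediate in every case.
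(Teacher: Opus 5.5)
Your proposal is correct and follows the same route as the paper. The paper writes \eqref{eq:mfld} as $\partial_s\rho_s=\nabla_\theta\cdot\big(\rho_s\nabla_\theta\tfrac{\delta\mathcal F_\beta}{\delta\rho}\big)$, differentiates $\mathcal F_\beta(\rho_s)$ through its first variation, integrates by parts, and defers rigor to \cite[Ch.~8]{ambrosio2008gradient}; you split off the entropy and supply that justification yourself (cutoffs controlled by the second-moment bound, an integrated Fisher-information bound, and a regularized-entropy or chain-rule limit), with the small caveat that Assumption~\ref{ass:confine} only makes $\nabla_\theta\tfrac{\delta\mathcal L}{\delta\rho}$ Lipschitz with linear growth (not bounded) and a Lipschitz drift gives Sobolev rather than $C^\infty$ regularity of $\rho_s$, neither of which your entropy chain-rule argument actually needs.
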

\begin{proof}
\iffalse 
This is the standard computation for Wasserstein gradient flows: \eqref{eq:mfld} is $\partial_s\rho_s = \nabla_\theta\cdot\big(\rho_s\nabla_\theta\tfrac{\delta\mathcal F_\beta}{\delta\rho}(\cdot,\rho_s)\big)$, and differentiating $\mathcal F_\beta(\rho_s)$ in $s$ using the chain rule for first variations together with the (weak form of the) evolution equation gives $\tfrac{d}{ds}\mathcal F_\beta(\rho_s) = \int \tfrac{\delta\mathcal F_\beta}{\delta\rho}\,\partial_s\rho_s\,d\theta = -\int \nabla_\theta\tfrac{\delta\mathcal F_\beta}{\delta\rho}\cdot\big(\rho_s\nabla_\theta\tfrac{\delta\mathcal F_\beta}{\delta\rho}\big)\,d\theta$ after an integration by parts, which is \eqref{eq:dissipation}. See \cite[Ch.~8]{ambrosio2008gradient} for the fully rigorous (metric-space, curve-of-maximal-slope) version of this identity. 
\fi  
This is the standard energy-dissipation identity for Wasserstein gradient flows. Recall that \eqref{eq:mfld} can be written as
\[
\partial_s\rho_s
=
\nabla_\theta\cdot
\left(
\rho_s
\nabla_\theta
\frac{\delta\mathcal F_\beta}{\delta\rho}
(\theta; \rho_s)
\right).
\]
Assuming sufficient regularity, the chain rule for first variations yields
\[
\frac{d}{ds}\mathcal F_\beta(\rho_s)
=
\int_{\mathbb R^p}
\frac{\delta\mathcal F_\beta}{\delta\rho}
(\theta;\rho_s)
\,\partial_s\rho_s(\theta)
\,d\theta.
\]
Substituting the evolution equation and integrating by parts (the boundary term vanishes because $\rho_s$ decays sufficiently fast at infinity) gives
\[
\begin{aligned}
\frac{d}{ds}\mathcal F_\beta(\rho_s)
&=
\int
\frac{\delta\mathcal F_\beta}{\delta\rho}
\,\nabla_\theta\!\cdot
\left(
\rho_s
\nabla_\theta
\frac{\delta\mathcal F_\beta}{\delta\rho}
\right)
d\theta
=
-
\int
\rho_s
\left|
\nabla_\theta
\frac{\delta\mathcal F_\beta}{\delta\rho}
\right|^2
d\theta,
\end{aligned}
\]
that is precisely \eqref{eq:dissipation}. A fully rigorous justification of this identity in the Wasserstein metric framework can be found in \cite[Chapter~8]{ambrosio2008gradient}.
\end{proof}

\subsection{Global convergence for a single mean-field attention layer}

Consider the single-layer  regime, in which the terminal prediction depends affinely on the parameter distribution.  Specifically, let the terminal token distribution be obtained from a single
attention update applied to a fixed  measure $\mu_0$. Define the update
map as
\[
\Phi_\rho(x)
=
x+V_{\rho,\mu_0}(x),
\]
and let
\[
\mu
=
(\Phi_\rho)_\#\mu_0
\]
be the pushforward of $\mu_0$ under $\Phi_\rho$. The terminal feature is then
\[
\bar x(\rho)
:=
\int_{\mathbb R^d}x\,d\mu(x)
=
\int_{\mathbb R^d}
\bigl(x+V_{\rho,\mu_0}(x)\bigr)\,
d\mu_0(x).
\]
Here the attention field $V_{\rho,\mu_0}$ is evaluated against the prescribed
 measure $\mu_0$, which remains fixed. Consequently, the mapping
\[
\rho\longmapsto\bar x(\rho)
\]
is affine. This setting corresponds to the classical mean-field approximation used in the analysis of two-layer neural networks and shallow attention models, where convexity of the training objective can be established
\cite{mei2018meanfield,chizat2018global,song2024unraveling,zhang2024transformers}.

\begin{assumption}[Convex readout loss]
\label{ass:convex}
For every target $y$, the loss $\ell(\cdot,y)$ is convex, and the terminal objective is given by
\[
\mathcal J(\mu)
=
\ell\!\left(
\int_{\mathbb R^d}x\,d\mu(x),
\,y
\right).
\]
\end{assumption}
\iffalse 

Consider the special case $T\to0$-type/single-block model in which the terminal readout is an \emph{affine} functional of $\rho$: concretely, take the terminal token feature to be $\bar x(\rho) := \int_{\R^d} x\,d\mu_1(x)$ with $\mu_1$ obtained from a \emph{single} mean-field attention step with a fixed (not self-evolving) context measure $\mu_0$, i.e.\ $\mu_1 = \Phi_\rho[\mu_0]$ with $\Phi_\rho[\mu_0](x) := x + V_{\rho,\mu_0}(x)$ applied once ($V_{\rho,\mu_0}$ evaluated against the \emph{fixed} $\mu_0$, not a self-consistently evolving $\mu_t$); this is the standard ``linearized context'' or ``single mean-field layer'' idealization used to obtain convexity in the classical mean-field neural network literature \cite{mei2018meanfield,chizat2018global} and is exactly analogous to the single-layer settings in which global convergence for attention has been proved \cite{song2024unraveling,zhang2024transformers}.

\begin{assumption}[Convexity of the readout loss]
\label{ass:convex}
$\ell(\cdot,y)$ is convex for every $y$, and $\mathcal J$ is of the form $\mathcal J(\mu) = \ell\big(\textstyle\int x\,d\mu(x),y\big)$.
\end{assumption}
\fi 

\begin{proposition}[Linear convexity of the single-layer risk]
\label{prop:convex}
Under Assumption~\ref{ass:convex} and the single-layer model above, $\mathcal L(\rho) = \mathbb E_y\,\ell\big(\bar x(\rho),y\big)$ is \emph{linearly convex} on $\Prob(\Theta)$: for all $\rho_0,\rho_1\in\Prob(\Theta)$, the map $\varepsilon\mapsto \mathcal L\big((1-\varepsilon)\rho_0+\varepsilon\rho_1\big)$ is convex on $[0,1]$.
\end{proposition}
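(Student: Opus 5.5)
The argument reduces to two facts: convex functions composed with affine maps stay convex, and expectations of convex functions are convex.

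\emph{Step 1: affinity of the feature map.} Write $\rho_\varepsilon:=(1-\varepsilon)\rho_0+\varepsilon\rho_1$. By definition \eqref{eq:V}, $V_{\rho,\mu_0}(x)=\int_\Theta A(x,\mu_0,\theta)\,\rho(d\theta)$. Here $\mu_0$ is the fixed context measure, so the integrand does not depend on $\rho$. Hence
\[
V_{\rho_\varepsilon,\mu_0}(x)=(1-\varepsilon)V_{\rho_0,\mu_0}(x)+\varepsilon V_{\rho_1,\mu_0}(x)
\]
pointwise in $x$. We then integrate against $\mu_0$. This is justified by Fubini: Lemma~\ref{lem:kernel-lip}(i) gives $|A(x,\mu_0,\theta)|\le D_\Theta R_0$ on $B_{R_0}\supset\Supp\mu_0$, and $\Theta$ is compact by Assumption~\ref{ass:compact-theta}. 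The result is
\[
\bar x(\rho_\varepsilon)=(1-\varepsilon)\bar x(\rho_0)+\varepsilon\bar x(\rho_1),
\]
so $\varepsilon\mapsto\bar x(\rho_\varepsilon)$ is affine from $[0,1]$ into $\R^d$. The identity-map term $\int x\,d\mu_0$ does not depend on $\rho$ and is harmless. The pushforward $(\Phi_\rho)_\#\mu_0$ never needs to be treated nonlinearly, because only its first moment enters $\mathcal J$ by Assumption~\ref{ass:convex}.

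\emph{Step 2: composition with the convex loss.} Fix $y$. Since $\ell(\cdot,y)$ is convex, for $\varepsilon=(1-\tau)\varepsilon_0+\tau\varepsilon_1$ affinity gives $\bar x(\rho_\varepsilon)=(1-\tau)\bar x(\rho_{\varepsilon_0})+\tau\bar x(\rho_{\varepsilon_1})$. Therefore
\[
\ell(\bar x(\rho_\varepsilon),y)\le(1-\tau)\ell(\bar x(\rho_{\varepsilon_0}),y)+\tau\,\ell(\bar x(\rho_{\varepsilon_1}),y).
\]
If the context measure depends on the sample, i.e.\ $\mu_0=\mu_0^{(y)}$ varies with the data point, Step 1 still applies sample by sample, since affinity holds for each fixed context.

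\emph{Step 3: averaging.} Take $\mathbb E_y$ of the inequality. The expectations are finite because $\ell$ is Lipschitz (Assumption~\ref{ass:loss}) and $|\bar x(\rho)|\le R_0+D_\Theta R_0$ uniformly in $\rho$, so integrability holds provided $\mathbb E|\ell(0,y)|<\infty$, which I will assume, as is implicit in $\mathcal L$ being well defined. Monotonicity and linearity of expectation then yield convexity of $\varepsilon\mapsto\mathcal L(\rho_\varepsilon)$ on $[0,1]$.

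There is no genuine obstacle. The only point requiring care is Step 1, specifically recording that the softmax kernel $K_{\mu_0}$ is frozen at $\mu_0$. The nonlinear dependence of $A$ on $\mu$ established in Lemma~\ref{lem:kernel-lip}(iii) therefore never interacts with $\rho$, and this is exactly what fails in the deep model \eqref{eq:forward-coupled}.
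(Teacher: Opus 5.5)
Your proposal is correct and follows essentially the paper's route. The paper likewise writes $\bar x(\rho)=\bar x_0+\int_\Theta\Psi(\theta)\,d\rho(\theta)$ with $\Psi(\theta)=\int A(x,\mu_0,\theta)\,d\mu_0(x)$, deduces that $\bar x$ is affine along linear interpolations, composes with the convex $\ell(\cdot,y)$, and averages over $y$. Your Fubini and integrability checks are harmless additions that the paper leaves implicit.
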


\begin{proof}
\iffalse 
Since $\mu_0$ is fixed (not self-referentially dependent on $\rho$ in this single-layer idealization), $\rho\mapsto V_{\rho,\mu_0}(x) = \int A(x,\mu_0,\theta)\,d\rho(\theta)$ is affine (indeed linear) in $\rho$ for every fixed $x$, hence so is $\rho\mapsto \bar x(\rho) = \int_{\R^d}\big(x+V_{\rho,\mu_0}(x)\big)\,d\mu_0(x) = \int_{\R^d}x\,d\mu_0(x) + \int_\Theta\Big(\int_{\R^d}A(x,\mu_0,\theta)\,d\mu_0(x)\Big)d\rho(\theta)$: writing $\Psi(\theta):=\int A(x,\mu_0,\theta)\,d\mu_0(x)$, we have the exactly linear representation $\bar x(\rho) = \bar x_0 + \int_\Theta \Psi(\theta)\,d\rho(\theta)$. Hence $\varepsilon\mapsto \bar x\big((1-\varepsilon)\rho_0+\varepsilon\rho_1\big) = (1-\varepsilon)\bar x(\rho_0)+\varepsilon \bar x(\rho_1)$ is affine in $\varepsilon$, and composing with the convex function $\ell(\cdot,y)$ (Assumption~\ref{ass:convex}) yields that $\varepsilon\mapsto\mathcal L((1-\varepsilon)\rho_0+\varepsilon\rho_1) = \mathbb E_y\,\ell\big((1-\varepsilon)\bar x(\rho_0)+\varepsilon\bar x(\rho_1),y\big)$ is convex, being the composition of a convex function with an affine one. 
\fi 

Since the context measure $\mu_0$ is fixed and does not depend on $\rho$, the attention field
\[
V_{\rho,\mu_0}(x)
=
\int_\Theta A(x,\mu_0,\theta)\,d\rho(\theta)
\]
is  linear in $\rho$ for every fixed $x$. Consequently,
\[
\begin{aligned}
\bar x(\rho)
&=
\int_{\mathbb R^d}
\bigl(x+V_{\rho,\mu_0}(x)\bigr)
\,d\mu_0(x) \\
&=
\underbrace{\int_{\mathbb R^d}x\,d\mu_0(x)}_{=:~\bar x_0}
+
\int_\Theta
\underbrace{\left(
\int_{\mathbb R^d}
A(x,\mu_0,\theta)\,d\mu_0(x)
\right)}_{=:\Psi(\theta)}
\,d\rho(\theta).
\end{aligned}
\]
Hence, $
\bar x(\rho)
=
\bar x_0
+
\int_\Theta\Psi(\theta)\,d\rho(\theta),
$ is an affine functional of $\rho$. Therefore, for any $\rho_0,\rho_1\in\Prob(\Theta)$,
\[
\bar x\!\left((1-\varepsilon)\rho_0+\varepsilon\rho_1\right)
=
(1-\varepsilon)\bar x(\rho_0)
+
\varepsilon\bar x(\rho_1),
\qquad
\varepsilon\in[0,1].
\]
Since $\ell(\cdot,y)$ is convex by Assumption~\ref{ass:convex}, it follows that
\[
\varepsilon
\longmapsto
\ell\!\left(
\bar x\!\left((1-\varepsilon)\rho_0+\varepsilon\rho_1\right),
y
\right)
\]
is convex. Taking expectation over $y$ preserves convexity, and therefore
\[
\varepsilon
\longmapsto
\mathcal L\!\left(
(1-\varepsilon)\rho_0+\varepsilon\rho_1
\right)
=
\mathbb E_y
\left[
\ell\!\left(
(1-\varepsilon)\bar x(\rho_0)
+\varepsilon\bar x(\rho_1),
y
\right)
\right]
\]
is convex. This proves that $\mathcal L$ is convex on $\Prob(\Theta)$.
\end{proof}

\begin{assumption}[Log-Sobolev inequality along the flow]
\label{ass:lsi}
\iffalse 
There is $\lambda_{\mathrm{LSI}}>0$ such that every probability measure $\nu\ll\rho_s^*$ (with $\rho_s^*\propto e^{-\delta\mathcal L_\lambda/\delta\rho(\cdot;\rho_s)/\beta}$ the instantaneous Gibbs measure) satisfies the log-Sobolev inequality $\Ent_{\rho_s^*}(\nu/\rho_s^*) \le \tfrac1{2\lambda_{\mathrm{LSI}}}\,I_{\rho_s^*}(\nu)$ with a uniform constant $\lambda_{\mathrm{LSI}}$ along the flow. This holds, e.g., under the Bakry--\'Emery criterion, whenever $\theta\mapsto \delta\mathcal L_\lambda/\delta\rho(\theta;\rho)+\tfrac{\lambda}{2}|\theta|^2$ is (jointly, uniformly in $\rho$) $\lambda_{\mathrm{LSI}}$-strongly convex in $\theta$ -- in particular this is guaranteed for $\lambda$ in Assumption~\ref{ass:confine} large enough relative to $L_A$.
\fi 
There exists a constant $\lambda_{\mathrm{LSI}}>0$ such that, for every
$s\ge0$, the instantaneous Gibbs measure
\[
\rho_s^*
\propto
\exp\!\left(
-\frac{1}{\beta}
\frac{\delta\mathcal L_\lambda}{\delta\rho}
(\cdot;\rho_s)
\right)
\]
satisfies the logarithmic Sobolev inequality
\[
\Ent_{\rho_s^*}\!\left(\frac{d\nu}{d\rho_s^*}\right)
\le
\frac{1}{2\lambda_{\mathrm{LSI}}}
I_{\rho_s^*}(\nu), \; I_{\rho_s^*}(\nu)= \int_{\R^d} | \nabla \log \frac{ d\nu }{ d\rho_s^* } |^2 d \nu,  
\]
for every  absolutely continuous probability measure $\nu$ with respect to $\rho_s^*,$ i.e., $\nu\ll\rho_s^*$. Moreover, the
constant $\lambda_{\mathrm{LSI}}$ is uniform along the flow: it does not depend on the training time $s$ or on the choice of $\nu$. 
\end{assumption}

Note that  $I_{\rho_s^*}$ is the relative Fisher information.  A sufficient condition is provided by the Bakry--Émery criterion: namely,
if the potential
\[
\theta
\longmapsto
\frac{\delta\mathcal L_\lambda}{\delta\rho}(\theta;\rho)
=
\frac{\delta\mathcal L}{\delta\rho}(\theta;\rho)
+\frac{\lambda}{2}|\theta|^2
\]
is uniformly $\lambda_{\mathrm{LSI}}$-strongly convex in $\theta$, uniformly
over $\rho$, then the above logarithmic Sobolev inequality holds. In
particular, this condition is satisfied whenever the confinement parameter
$\lambda$ in Assumption~\ref{ass:confine} is chosen sufficiently large
relative to the Lipschitz constant $L^*$.

\begin{theorem}[Global exponential convergence, single-layer/convex case]
\label{thm:convex-conv}
\iffalse 
Under Assumptions~\ref{ass:compact-theta}--\ref{ass:lsi} and Proposition~\ref{prop:convex}, $\mathcal F_\beta$ is linearly convex on $\Prob(\Theta)$, admits a unique minimizer $\rho_\beta^\star\in\Prob(\Theta)$ (the Gibbs measure $\rho_\beta^\star \propto \exp\big(-\tfrac1\beta\big(\delta\mathcal L_\lambda/\delta\rho(\cdot;\rho^\star_\beta) + \text{const}\big)\big)$), and along the mean-field Langevin flow of Theorem~\ref{thm:mfld-wp},
\begin{equation}
\mathcal F_\beta(\rho_s) - \mathcal F_\beta(\rho_\beta^\star) \le e^{-2\beta\lambda_{\mathrm{LSI}}\,s}\,\big(\mathcal F_\beta(\rho_0)-\mathcal F_\beta(\rho_\beta^\star)\big), \qquad s\ge0.
\label{eq:exp-conv}
\end{equation}
Consequently $\mathcal L(\rho_s) \le \min_{\rho}\mathcal L(\rho) + \beta\log|\Theta| + e^{-2\beta\lambda_{\mathrm{LSI}}s}\,\mathcal F_\beta(\rho_0)/\ldots \to \min_\rho\mathcal L(\rho)$ as $s\to\infty$ and then $\beta\to0$ (in this order), i.e.\ the training dynamics converges to the \emph{global} minimum of the single-layer training risk.
\fi 
Under Assumptions~\ref{ass:compact-theta}--\ref{ass:lsi} and
Proposition~\ref{prop:convex}, the free-energy functional
\[
\mathcal F_\beta(\rho)
=
\mathcal L_\lambda(\rho)
+
\beta\,\Ent(\rho)
\]
is convex on $\Prob(\Theta)$ and therefore admits a unique minimizer
$\rho_\beta^\star\in\Prob(\Theta)$. Equivalently,
$\rho_\beta^\star$ satisfies the Gibbs fixed-point equation
\[
\rho_\beta^\star(d\theta)
=
\frac1{Z_\beta}
\exp\!\left(
-\frac1\beta
\frac{\delta\mathcal L_\lambda}{\delta\rho}
(\theta;\rho_\beta^\star)
\right)
\,d\theta,
\]
where $Z_\beta$ is the normalization constant.

Moreover, along the mean-field Langevin flow of
Theorem~\ref{thm:mfld-wp},
\begin{equation}\label{decay}
\mathcal F_\beta(\rho_s)-\mathcal F_\beta(\rho_\beta^\star)
\le
e^{-2\beta\lambda_{\mathrm{LSI}}s}
\Bigl(
\mathcal F_\beta(\rho_0)
-
\mathcal F_\beta(\rho_\beta^\star)
\Bigr),
\qquad s\ge0.
\end{equation} 
In particular,
\[
\rho_s
\longrightarrow
\rho_\beta^\star
\quad\text{exponentially fast as }s\to\infty.
\]

Finally, since
\[
\mathcal F_\beta(\rho)
=
\mathcal L_\lambda(\rho)
+
\beta\,\Ent(\rho),
\]
the Gibbs minimizer $\rho_\beta^\star$ converges, as $\beta\to0$, to a
minimizer of $\mathcal L_\lambda$. Consequently,
\[
\lim_{\beta\to0}
\lim_{s\to\infty}
\mathcal L_\lambda(\rho_s)
=
\min_{\rho\in\Prob(\Theta)}
\mathcal L_\lambda(\rho),
\]
that is, the mean-field Langevin training dynamics converges to the global
minimum of the single-layer training objective.
\end{theorem}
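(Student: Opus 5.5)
The plan follows the standard mean-field Langevin route of Nitanda--Wu--Suzuki and Hu--Ren--\v{S}i\v{s}ka--Szpruch, in four steps: convexity and the Gibbs characterization of the minimizer, an entropy sandwich linking the free-energy gap to a relative entropy, the log-Sobolev inequality combined with Lemma~\ref{lem:dissipation}, and finally the limit $\beta\to0$.

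\emph{Step 1: convexity and the minimizer.} By Proposition~\ref{prop:convex}, $\mathcal L$ is linearly convex. The confinement term $\rho\mapsto\frac\lambda2\int|\theta|^2d\rho$ is linear, and $\Ent$ is strictly convex along linear interpolations. Hence $\mathcal F_\beta$ is strictly linearly convex, which gives uniqueness of a minimizer. For existence I would use the direct method:
\begin{itemize}
\item $\mathcal L$ is bounded below, since $\ell$ is Lipschitz and $\bar x(\rho)$ is bounded on the compact forward domain.
\item The quadratic term together with $\Ent$ makes sublevel sets tight and uniformly integrable.
\item $\Ent$ is weakly lower semicontinuous, and $\mathcal L$ is continuous by Theorem~\ref{thm:fb-wp}.
\end{itemize}
Perturbing a minimizer by $(1-\varepsilon)\rho^\star+\varepsilon\nu$ then gives the first-order condition that $\frac{\delta\mathcal L_\lambda}{\delta\rho}(\cdot;\rho^\star)+\beta\log\rho^\star$ is constant on $\mathbb R^p$. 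This is exactly the Gibbs fixed-point equation. Conversely, by convexity any Gibbs fixed point is the minimizer.

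\emph{Step 2: entropy sandwich.} Write $\hat\rho_s:=\rho_s^*$ for the instantaneous Gibbs measure of Assumption~\ref{ass:lsi}. Linear convexity of $\mathcal L_\lambda$ gives
\[
\mathcal L_\lambda(\rho^\star)\ge \mathcal L_\lambda(\rho_s)+\int \frac{\delta\mathcal L_\lambda}{\delta\rho}(\cdot;\rho_s)\,d(\rho^\star-\rho_s).
\]
Substituting $\frac{\delta\mathcal L_\lambda}{\delta\rho}(\cdot;\rho_s)=-\beta\log\hat\rho_s+\text{const}$ yields
\[
\mathcal F_\beta(\rho_s)-\mathcal F_\beta(\rho^\star)\le \beta\,\Ent_{\hat\rho_s}(\rho_s)-\beta\,\Ent_{\hat\rho_s}(\rho^\star)\le \beta\,\Ent_{\hat\rho_s}(\rho_s),
\]
where $\Ent_{\hat\rho_s}(\cdot)$ denotes relative entropy with respect to $\hat\rho_s$.

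\emph{Step 3: LSI and Gr\"onwall.} The dissipation identity of Lemma~\ref{lem:dissipation} reads
\[
\frac{d}{ds}\mathcal F_\beta(\rho_s)=-\int\Big|\nabla_\theta\frac{\delta\mathcal L_\lambda}{\delta\rho}+\beta\nabla\log\rho_s\Big|^2d\rho_s=-\beta^2 I_{\hat\rho_s}(\rho_s).
\]
Assumption~\ref{ass:lsi} gives $I_{\hat\rho_s}(\rho_s)\ge 2\lambda_{\mathrm{LSI}}\Ent_{\hat\rho_s}(\rho_s)$. Combining this with Step 2,
\[
\frac{d}{ds}\big(\mathcal F_\beta(\rho_s)-\mathcal F_\beta(\rho^\star)\big)\le -2\beta\lambda_{\mathrm{LSI}}\big(\mathcal F_\beta(\rho_s)-\mathcal F_\beta(\rho^\star)\big),
\]
and Gr\"onwall's inequality yields \eqref{decay}.

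To convert this into convergence of $\rho_s$ itself, the same convexity computation with $\rho^\star$ as the base point gives
\[
\mathcal F_\beta(\rho_s)-\mathcal F_\beta(\rho^\star)\ge\beta\,\Ent_{\rho^\star}(\rho_s).
\]
Pinsker's inequality, or Talagrand's inequality since $\rho^\star$ satisfies an LSI, then gives exponential convergence in total variation or $\Wass_2$.

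\emph{Main obstacle.} I expect the hardest step to be rigor rather than algebra. The chain rule in Lemma~\ref{lem:dissipation} needs enough regularity and finite Fisher information for $s>0$. I would obtain this from parabolic smoothing of the Fokker--Planck equation with Lipschitz drift (Theorem~\ref{thm:mfld-wp}), or by citing \cite{nitanda2022convex,hu2021meanfield} for these steps.

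\emph{Step 4: the limit $\beta\to0$.} Fix any $\nu$ with finite entropy, and mollify a near-minimizer of $\mathcal L_\lambda$ if needed. Then
\[
\mathcal L_\lambda(\rho^\star_\beta)+\beta\Ent(\rho^\star_\beta)\le\mathcal L_\lambda(\nu)+\beta\Ent(\nu).
\]
Because the quadratic moment is controlled, $\Ent(\rho^\star_\beta)\ge -C$ uniformly in $\beta$ (Gaussian comparison). Hence
\[
\limsup_{\beta\to0}\mathcal L_\lambda(\rho^\star_\beta)\le \mathcal L_\lambda(\nu).
\]
Taking the infimum over $\nu$, and using density of finite-entropy measures together with continuity of $\mathcal L_\lambda$, gives the claim. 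Since $\mathcal L_\lambda(\rho_s)\to\mathcal L_\lambda(\rho^\star_\beta)$ as $s\to\infty$ by Step 3 and continuity, the iterated limit follows.
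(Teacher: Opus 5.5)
Your proposal is correct. It shares the paper's skeleton: convexity and uniqueness, the Gibbs first-order condition, Lemma~\ref{lem:dissipation} plus Assumption~\ref{ass:lsi} and Gr\"onwall, then the zero-temperature limit. It differs at the decisive step, and the difference matters.

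The paper goes straight from the dissipation identity, written as $-I_{\rho_s}$, to $I_{\rho_s}\ge 2\lambda_{\mathrm{LSI}}\bigl(\mathcal F_\beta(\rho_s)-\mathcal F_\beta(\rho_\beta^\star)\bigr)$, citing the LSI. But the LSI only controls the relative entropy of $\rho_s$ with respect to the instantaneous Gibbs measure $\rho_s^*$, and nothing in the paper's proof relates that quantity to the free-energy gap. The factors of $\beta$ are also not tracked: the displayed chain would give the rate $2\lambda_{\mathrm{LSI}}$, not the stated $2\beta\lambda_{\mathrm{LSI}}$.

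Your Step~2 supplies exactly the missing link. The entropy sandwich $\mathcal F_\beta(\rho_s)-\mathcal F_\beta(\rho^\star)\le\beta\,\Ent_{\hat\rho_s}(\rho_s)$, from the tangent inequality for the linearly convex $\mathcal L_\lambda$, combined with writing the dissipation as $-\beta^2 I_{\hat\rho_s}(\rho_s)$, yields the correct rate. This is where Proposition~\ref{prop:convex} is really needed; the paper uses convexity only for uniqueness and the characterization of the minimizer.

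You also prove several things the paper only asserts:
\begin{itemize}
\item existence of the minimizer, by the direct method;
\item the Gibbs form on all of $\R^p$ (the paper's ``constant on $\Supp(\rho_\beta^\star)$'' needs full support, which comes from the infinite slope of $t\log t$ at $0$);
\item convergence of $\rho_s$ itself, via the reverse sandwich $\mathcal F_\beta(\rho_s)-\mathcal F_\beta(\rho^\star)\ge\beta\,\Ent_{\rho^\star}(\rho_s)$ and Pinsker;
\item a concrete $\beta\to0$ argument through a Gaussian-comparison entropy lower bound, instead of an appeal to the Laplace principle.
\end{itemize}
The paper's version gains only brevity, by implicitly deferring to the known mean-field Langevin theory.

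A few minor points to tighten:
\begin{itemize}
\item Assumption~\ref{ass:lsi} gives an LSI only for the measures $\rho_s^*$ along the flow, not for $\rho_\beta^\star$. The Talagrand option therefore needs the uniform-in-$\rho$ Bakry--\'Emery version; Pinsker alone is safe.
\item Total-variation convergence does not by itself give $\mathcal L_\lambda(\rho_s)\to\mathcal L_\lambda(\rho_\beta^\star)$, because of the term $\frac\lambda2\int|\theta|^2\,d\rho$. Either extract uniform integrability of $|\theta|^2$ from the bound on $\Ent_{\rho^\star}(\rho_s)$ (Donsker--Varadhan with $e^{\varepsilon|\theta|^2}$, using the Gaussian tails of $\rho^\star$). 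Or argue with $\limsup_{s\to\infty}$, using $\mathcal L_\lambda=\mathcal F_\beta-\beta\Ent$ and your uniform entropy lower bound.
\item In Step~1 it is $\frac\lambda2\int|\theta|^2\,d\rho+\beta\Ent$, not $\Ent$ alone, that is weakly lower semicontinuous on $\R^p$.
\item The lower bound on $\mathcal L$ should come from the confinement rather than compactness, since on $\Theta=\R^p$ the feature $\bar x(\rho)$ can grow like $\int|\theta|\,d\rho$.
\end{itemize}
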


\begin{proof}

By Proposition~\ref{prop:convex}, the functional $\mathcal L$ is convex along
linear interpolations of probability measures. The entropy functional
\[
\Ent(\rho)=\int \rho\log\rho
\]
is also linearly convex, since $t\mapsto t\log t$ is convex and Jensen's
inequality applies to mixture interpolations. Therefore,
\[
\mathcal F_\beta
=
\mathcal L_\lambda+\beta\Ent
\]
is linearly convex. Moreover, because $\beta>0$, the entropy term is strictly
convex, implying that $\mathcal F_\beta$ admits at most one minimizer.

The minimizer $\rho_\beta^\star$ is characterized by the first-order optimality
condition
\[
\frac{\delta\mathcal F_\beta}{\delta\rho}
(\theta;\rho_\beta^\star)
=
\mathrm{const},
\qquad
\theta\in\Supp(\rho_\beta^\star),
\]
which is equivalent to the Gibbs fixed-point representation stated in the
theorem.

By Lemma~\ref{lem:dissipation},
\[
\frac{d}{ds}
\Bigl(
\mathcal F_\beta(\rho_s)
-
\mathcal F_\beta(\rho_\beta^\star)
\Bigr)
=
-
I_{\rho_s},
\]
where $I_{\rho_s}$ denotes the corresponding relative Fisher information.
Assumption~\ref{ass:lsi} implies the logarithmic Sobolev inequality
\[
I_{\rho_s}
\ge
2\lambda_{\mathrm{LSI}}
\Bigl(
\mathcal F_\beta(\rho_s)
-
\mathcal F_\beta(\rho_\beta^\star)
\Bigr),
\]
uniformly along the flow. Consequently,
\[
\frac{d}{ds}
\Bigl(
\mathcal F_\beta(\rho_s)
-
\mathcal F_\beta(\rho_\beta^\star)
\Bigr)
\le
-2\lambda_{\mathrm{LSI}}
\Bigl(
\mathcal F_\beta(\rho_s)
-
\mathcal F_\beta(\rho_\beta^\star)
\Bigr),
\]
and Gr\"onwall's inequality yields
\eqref{decay}.

Finally, since
\[
\mathcal F_\beta(\rho)
=
\mathcal L_\lambda(\rho)
+
\beta\Ent(\rho),
\]
the minimizers of $\mathcal F_\beta$ converge, as $\beta\downarrow0$, to
minimizers of $\mathcal L_\lambda$ by the standard zero-temperature (Laplace)
principle. Combining this with the exponential convergence
$\rho_s\to\rho_\beta^\star$ proves
\[
\lim_{\beta\to0}
\lim_{s\to\infty}
\mathcal L_\lambda(\rho_s)
=
\min_{\rho\in\Prob(\Theta)}
\mathcal L_\lambda(\rho).
\]

\qed
\end{proof}

\subsection{Local convergence for deep, compositional transformers}

Theorem~\ref{thm:convex-conv} relies  on the  $\rho\mapsto\bar x(\rho)$ being \emph{affine}, which holds only because we froze the  measure at $\mu_0$ and the  single layer assumption. For the genuinely deep model \eqref{eq:forward-coupled}, the terminal token distribution $\mu_T$ (hence $\mathcal L(\rho)=\mathcal J(\mu_T)$) is a  non-linear and non--convex function of $\rho$, because $\rho$ re-enters the dynamics through the self-consistent dependence of $V_{\rho,\mu_t}$.  Further, it is a  non-linear map since it composed with itself across the depth. Proposition~\ref{prop:convex} therefore does \emph{not} extend to more layers, i.e., $T>0$,  and global convexity-based convergence of the type of Theorem~\ref{thm:convex-conv} are not available. We refer to results on  deep mean-field transformers \cite{barboni2026training} and deep mean-field ResNets \cite{barboni2024understanding}. Therein,  only a \emph{local} guarantee, via a Neural Tangent Kernel (NTK) non-degeneracy condition, is currently known.

\begin{definition}[NTK operator]
For a probability measure $\rho\in\Prob(\Theta)$, let $q_\rho$ solve the same 
adjoint equation for $p$ with terminal condition $q_\rho(T, x)=x$, 
and define the NTK kernel by 
$$
K_\rho:= \int D_\theta \phi_\rho(\theta)  D_\theta \phi_\rho(\theta)^\top  d\rho(\theta),
$$
where 
\begin{equation}\label{phir} 
\phi_\rho(\theta) := \int_0^T \int \nabla_x q_\rho(t, x)\cdot A(x, \mu_t^\rho, \theta)d\mu_t^\rho dt.
\end{equation} 
\end{definition}

\begin{assumption}[NTK non-degeneracy at initialization]
\label{ass:ntk} 
%Assume that $K_{\rho_s}$ has an upper  bound 
%$$
%\lambda_{\rm max} (K_{\rho_s})  \leq \Lambda 
%$$
%for $\rho_s$ in the neighborhood of $\rho_0$. 
There exists $\Lambda >  \lambda_0>0$,  and $L_K>0$ such that 
$$
 {\rm spec}  (K_{\rho_0}) \in [\lambda_0,  \Lambda], 
$$
and, in a neighborhood of $\rho_0$, 
$$
\|K_\rho -K_{\rho'}\|_{\rm op} \leq L_K W_2(\rho, \rho').  
$$
\end{assumption}
Thus, the NTK remains non-degenerate as long as the parameter distribution stays sufficiently close to its initial value.
\begin{theorem}[Local linear convergence via NTK positivity]
\label{thm:ntk-conv} Consider the loss function of form 
$$
J(\mu_T)=\frac{1}{2} \left|y- \int x d\mu_T(x) \right|^2. 
$$
Let Assumptions~\ref{ass:compact-theta}--\ref{ass:confine}, \ref{ass:ntk} hold. Assume a pure Wasserstein gradient flow \eqref{eq:training-transport} (i.e.\ $\beta=0$ in \eqref{eq:mfld}) and assume that  the initial loss satisfies
\begin{equation}\label{ini}
\mathcal L(\rho_0) - \min_\rho \mathcal L(\rho) \le \frac{ \lambda_0^4}{16  L_K (\lambda_0+2\Lambda L_K)},
\end{equation}
then the gradient flow $\rho_s$ of Theorem~\ref{thm:mfld-wp} with $\beta=0$ satisfies, for all $s\ge0$,
\begin{equation}
\mathcal L(\rho_s) - \min_\rho\mathcal L(\rho) \le e^{-\lambda_0 s}\,\big(\mathcal L(\rho_0)-\min_\rho\mathcal L(\rho)\big).
\end{equation}
Hence, the training dynamics converges at a linear rate to a global minimizer of $\mathcal L$.
\end{theorem}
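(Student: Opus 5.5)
The plan is the standard NTK/Polyak--\L{}ojasiewicz argument: compare the kernel along the flow with $K_{\rho_0}$, and close with a bootstrap in which the smallness condition \eqref{ini} keeps the flow inside the region where Assumption~\ref{ass:ntk} still gives a positive lower bound.

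\textbf{Gradient structure.} Write the residual $r(\rho):=\int x\,d\mu_T^\rho(x)-y\in\R^d$, so that $\mathcal L(\rho)=\tfrac12|r(\rho)|^2$. Since $\delta\mathcal J/\delta\mu(\mu_T)(x)=-(y-\int x\,d\mu_T)\cdot x=r\cdot x$ is linear in $r$, linearity of the adjoint equation \eqref{eq:adjoint} gives $p=r\cdot q_\rho$ componentwise. Proposition~\ref{prop:pmp} then yields $\tfrac{\delta\mathcal L}{\delta\rho}(\theta)=r(\rho)\cdot\phi_\rho(\theta)$, with $\phi_\rho$ as in \eqref{phir}. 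Along the flow \eqref{eq:training-transport} the residual evolves by
\[
\tfrac{d}{ds}r(\rho_s)=\int\phi_{\rho_s}\,d(\partial_s\rho_s)=-\int D_\theta\phi_{\rho_s}\,D_\theta\phi_{\rho_s}^\top r\,d\rho_s=-K_{\rho_s}r(\rho_s).
\]
Hence $\tfrac{d}{ds}\mathcal L(\rho_s)=-r^\top K_{\rho_s}r$. If $\lambda_{\min}(K_{\rho_s})\ge\lambda_0/2$, this gives $\tfrac{d}{ds}\mathcal L\le-\lambda_0\mathcal L$. I will also observe that $\min_\rho\mathcal L=0$: local positivity of $K$ makes $\rho\mapsto r(\rho)$ locally surjective, and the argument below exhibits a limit with $r=0$. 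So it suffices to bound $\mathcal L$ itself.

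\textbf{Bootstrap.} Let $s^*$ be the first time with $\lambda_{\min}(K_{\rho_s})<\lambda_0/2$. On $[0,s^*)$ we have $|r(\rho_s)|\le e^{-\lambda_0 s/2}|r_0|$. The Wasserstein speed is
\[
|\dot\rho_s|_{\Wass_2}=\Big(\int|D_\theta\phi^\top r|^2d\rho_s\Big)^{1/2}=(r^\top K_{\rho_s}r)^{1/2}.
\]
Using $\Wass_2(\rho_s,\rho_0)\le\int_0^s|\dot\rho_\tau|\,d\tau$ and $\|K_{\rho_s}\|\le\Lambda+L_K\Wass_2(\rho_s,\rho_0)$ from Assumption~\ref{ass:ntk}, write $w(s):=\Wass_2(\rho_s,\rho_0)$. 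Then
\[
\dot w\le\big(\Lambda+L_Kw\big)^{1/2}e^{-\lambda_0 s/2}|r_0|.
\]
Integrating gives $w(s)\le C(\Lambda,L_K,\lambda_0)\,|r_0|/\lambda_0$, roughly $w\lesssim 2\sqrt{\Lambda}|r_0|/\lambda_0+ L_K|r_0|^2/\lambda_0^2$. Requiring $L_Kw<\lambda_0/2$ keeps $\lambda_{\min}(K_{\rho_s})\ge\lambda_0-L_Kw>\lambda_0/2$, which contradicts finiteness of $s^*$. Writing $|r_0|^2=2\mathcal L(\rho_0)$, the requirement becomes a condition on $\mathcal L(\rho_0)$, which I expect to match \eqref{ini} after squaring: the factor $\lambda_0+2\Lambda L_K$ comes from combining the two terms of the bound on $w$.

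\textbf{Main obstacle.} The delicate part is rigorously justifying the chain rule $\tfrac{d}{ds}r=-K r$ and the metric-speed identity. These need differentiability of $\rho\mapsto\mu_T^\rho$ along the flow, which Theorem~\ref{thm:fb-wp} and Corollary~\ref{cor:forward-wp} provide only as Lipschitz estimates. I would handle this via the Lipschitz gradient representation together with the curve-of-maximal-slope framework of \cite{ambrosio2008gradient}. A second point is that the Lipschitz bound on $K$ holds only ``in a neighborhood'' of $\rho_0$; this is absorbed by making the bootstrap radius smaller than that neighborhood, which is exactly what the smallness of the initial loss provides. Finally, $r\to0$ exponentially gives $\min\mathcal L=0$, which identifies the limit as a global minimizer.
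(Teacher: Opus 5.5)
Your route is the paper's. You use the factorization $\Psi_\rho:=\frac{\delta\mathcal L}{\delta\rho}=r_\rho\cdot\phi_\rho$, the identity $\frac{d}{ds}\mathcal L(\rho_s)=-r^\top K_{\rho_s}r$, and the PL bound from $\lambda_{\min}(K_{\rho_s})\ge\lambda_0/2$. You then bootstrap on $\Wass_2(\rho_s,\rho_0)$ via the metric speed $(r^\top K_{\rho_s}r)^{1/2}$ and an upper bound on $\|K_{\rho_s}\|$; you propagate $\Lambda+L_Kw$ dynamically, while the paper freezes it on the ball.

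\textbf{The constants do not close.} Your integration is right and yields the requirement
\[
2\sqrt{\Lambda}\,z+z^2<\frac{\lambda_0}{2},\qquad z:=\frac{L_K|r_0|}{\lambda_0},
\]
i.e.\ $z<\sqrt{\Lambda+\lambda_0/2}-\sqrt{\Lambda}$. No factor $\lambda_0+2\Lambda L_K$ emerges, and \eqref{ini} does not imply this for large $L_K$. For example, take $\Lambda=1$, $\lambda_0=1/2$, $L_K=3$: \eqref{ini} (with $\min\mathcal L=0$) permits $z\approx0.120$, for which $2\sqrt{\Lambda}\,z+z^2\approx0.255>\lambda_0/2$. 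The factor in \eqref{ini} comes from the paper's bound $\lambda_{\max}(K_{\rho_s})\le\Lambda+\lambda_0/(2L_K)$. On the ball $\Wass_2(\rho_s,\rho_0)\le\lambda_0/(2L_K)$ this should read $\Lambda+\lambda_0/2$. So the paper's Step~3 has the same defect, and for large $L_K$ \eqref{ini} implies neither the corrected version of that estimate nor yours.

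\textbf{The claim $\min\mathcal L=0$ is circular.} Your bootstrap needs $\mathcal L(\rho_0)=|r_0|^2/2$ small, but \eqref{ini} only controls $\mathcal L(\rho_0)-\min\mathcal L$. ``Local surjectivity'' of $\rho\mapsto r(\rho)$ reaches $r=0$ only if $|r_0|$ is already known to be small; the paper simply assumes $\mathcal L^\star=0$.

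\textbf{A fix for both.} Bound the path length by the {\L}ojasiewicz trick instead of through $\Lambda$. While $\lambda_{\min}(K_{\rho_s})\ge\lambda_0/2$ you have $\|\nabla_\theta\Psi_{\rho_s}\|_{L^2(\rho_s)}^2\ge\lambda_0\mathcal L(\rho_s)$. Hence $\frac{d}{ds}\sqrt{\mathcal L(\rho_s)}\le-\frac{\sqrt{\lambda_0}}{2}\|\nabla_\theta\Psi_{\rho_s}\|_{L^2(\rho_s)}$. With $\mathcal L^\star:=\inf\mathcal L\ge0$ and $\sqrt a-\sqrt b\le\sqrt{a-b}$, this gives
\[
\Wass_2(\rho_s,\rho_0)\le\frac{2}{\sqrt{\lambda_0}}\Big(\sqrt{\mathcal L(\rho_0)}-\sqrt{\mathcal L(\rho_s)}\Big)\le 2\sqrt{\frac{\mathcal L(\rho_0)-\mathcal L^\star}{\lambda_0}}.
\]
This closes the bootstrap whenever $\mathcal L(\rho_0)-\mathcal L^\star<\lambda_0^3/(16L_K^2)$, which \eqref{ini} implies since $\lambda_0L_K<\lambda_0+2\Lambda L_K$. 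Then $\mathcal L(\rho_s)\le e^{-\lambda_0 s}\mathcal L(\rho_0)\to0$ gives $\mathcal L^\star=0$ a posteriori, with no circularity. The finite path length gives a $\Wass_2$-limit of $\rho_s$, which is a minimizer by continuity of $\rho\mapsto r(\rho)$.

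\textbf{The neighborhood.} \eqref{ini} is a fixed number, so it cannot shrink the bootstrap radius to fit an unspecified neighborhood. You must assume, as the paper tacitly does, that the Lipschitz bound on $K_\rho$ holds on the whole ball $\Wass_2(\rho,\rho_0)\le\lambda_0/(2L_K)$.
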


\begin{proof} The proof consists of three steps. \\ 
{\bf Step 1: Preservation of NTK non-degeneracy.}\\
By Assumption~\ref{ass:ntk} and the Lipschitz continuity of $K_\rho$ we have 
\[
\lambda_{\min}(K_{\rho_s})
\ge
\lambda_{\min}(K_{\rho_0})
-
L_K
\Wass_2(\rho_s,\rho_0) \geq \lambda_0 - L_K
\Wass_2(\rho_s,\rho_0). 
\]
Hence, as long as 
\[
\Wass_2(\rho_s,\rho_0)
\le
\frac{\lambda_0}{2L_K},
\]
we have
\[
\lambda_{\min}(K_{\rho_s})
\ge
\frac{\lambda_0}{2}.
\]
Thus, the NTK remains uniformly coercive as long as the training trajectory remains inside a sufficiently small Wasserstein neighborhood of its initialization. Moreover,
$$
\lambda_{\max}(K_{\rho_s}) \leq \Lambda +L_K
\Wass_2(\rho_s,\rho_0) \leq \Lambda + \frac{\lambda_0}{2L_K}. 
$$

{\bf Step 2: Energy dissipation and the Polyak–Łojasiewicz inequality.} \\ 

By the dissipation identity given in Lemma~\ref{lem:dissipation}  with $\beta=0$, 
\begin{equation}
\frac{d}{ds}\mathcal L(\rho_s)
=
-
\int 
\bigl|
\nabla_\theta\Psi_{\rho_s}(\theta)
\bigr|^2
\,d\rho_s(\theta).
\label{eq:pl-step1}
\end{equation}
We first show that NTK non-degeneracy  implies the Wasserstein Polyak–Łojasiewicz 
inequality 
\[
\left\|\nabla_\theta \Psi_\rho\right\|_{L^2(\rho)}^2
\ge \lambda_0
\bigl(
\mathcal L(\rho)
-
\mathcal L^\star
\bigr),
\]
where
\(
\mathcal L^\star
=
\min_\rho\mathcal L(\rho).
\)
%Then $J(\mu_T)=\frac{1}{2}(y-m(\mu_T))^2, \quad m(\mu):= \int x d\mu(x)$. 
Let $m_\rho$ denote the first momentum of the terminal state $m(\mu_T^\rho)$, with 
$$
m(\mu):=\int x d\mu(x) \in \mathbb{R}^d, 
$$
and define the  vector residual 
$r_\rho:=m_\rho - y$. We consider the terminal loss  
$$
L(\rho)=J(\mu_T^\rho)=\frac{1}{2}|r_\rho|^2. 
$$
A direct calculation gives  
$$
\frac{\delta J}{\delta \mu_T}(x)= r_\rho \cdot x. 
$$
Hence the terminal condition for the adjoint equation is 
 $$
 p(T, x)=\frac{\delta J}{\delta \mu_T}(x)= r_\rho \cdot x.
 $$ 
 For fixed $\rho$, the adjoint equation is linear in $p$, write  
$$
p(t, x)=r_\rho \cdot q_\rho(t, x), 
$$
where $q_\rho(t, x)\in \mathbb{R}^d$ is vector-valued and solves the same adjoint equation with normalized terminal condition $q_\rho(T, x)=x$. Consequently,  
$$
\nabla_x p(t, x)=( \nabla_x q_\rho(t, x))^\top r_\rho. 
$$
Substituting this representation into the formula for the first variation of $\mathcal L$ gives
$$
\Psi_\rho(\theta)=\frac{\delta \mathcal L}{\delta \rho}(\theta; \rho)= (\phi_\rho(\theta))^\top r_\rho,
$$
where 
$
\phi_\rho(\theta)
$
is given in (\ref{phir}).  Since $r_\rho$ is independent of $\theta$,
$$
\nabla_\theta \Psi_\rho(\theta)=(\nabla_\theta \phi_\rho(\theta))^\top r_\rho. 
$$
Thus 
$$
\|\nabla_\theta \Psi_{\rho_s}\|^2_{L^2(\rho_s)}= r_{\rho_s}^\top K_{\rho_s} r_{\rho_s}  \geq \frac{\lambda_0}{2} |r_{\rho_s}|^2. 
%|r_\rho|^2 \int |\nabla_\theta \phi_\rho(\theta)|^2 d\rho(\theta).  
$$
If the minimum loss is attained at zero, so that $\mathcal L^*=0$, then 
$$
|r_{\rho_s}|^2= 2( \mathcal L(\rho_s)-\mathcal L^*). 
$$
Therefore 
$$
\|\nabla_\theta \Psi_{\rho_s}\|^2_{L^2(\rho_s)} \geq  \lambda_0  (\mathcal L(\rho)-\mathcal L^* ). 
$$
This establishes the desired Wasserstein PL inequality. Combining this inequality with the dissipation identity (\ref{eq:pl-step1}), we obtain  
\[
\frac{d}{ds}
\bigl(
\mathcal L(\rho_s)-\mathcal L^\star
\bigr)
\le
-
\lambda_0
\bigl(
\mathcal L(\rho_s)-\mathcal L^\star
\bigr),
\]
and Gr\"onwall's inequality immediately implies
\[
\mathcal L(\rho_s)-\mathcal L^\star
\le
e^{-\lambda_0 s}
\bigl(
\mathcal L(\rho_0)-\mathcal L^\star
\bigr).
\]
{\bf Step 3: $\rho_s$ staying in the neighborhood.}\\
It remains to verify that the trajectory never leaves the neighborhood
$\Wass_2(\rho_s,\rho_0)\le \lambda_0/(2L_k)$ on which the above argument is valid.
Since the metric derivative of the Wasserstein gradient flow satisfies
\[
|\partial_s  \rho_s|_{\Wass_2}
=
\|
\nabla_\theta\Psi_{\rho_s}
\|_{L^2(\rho_s)},
\]
the total displacement satisfies
\[
\Wass_2(\rho_0,\rho_s)
\le
\int_0^s
\|
\nabla_\theta\Psi_{\rho_\tau}
\|_{L^2(\rho_\tau)}
\,d\tau.
\]
Using Assumption ~\ref{ass:ntk} again,  we have the upper bound 
$$
\|\nabla_\theta \Psi_{\rho_s}\|^2_{L^2(\rho_s)}= r_{\rho_s}^\top K_{\rho_s} r_{\rho_s} \leq (\Lambda +\lambda_0/(2L_K))  |r_{\rho_s}|^2 
=(2 \Lambda+\lambda_0/L_K) ( \mathcal L(\rho_s)-\mathcal L^*).
$$
Combining this with the exponential decay established above 
$$
 \mathcal L(\rho_s)-\mathcal L^*)  \leq  e^{-\lambda_0 s} ( \mathcal L(\rho_0)-\mathcal L^*),
$$
we obtain %Using the exponential decay established \textbf{}above together with the energy
%dissipation identity and Cauchy--Schwarz yields
$$
\|\nabla_\theta \Psi_{\rho_s}\|_{L^2(\rho_s)}
\leq \sqrt{2\Lambda +\lambda_0/L_K} e^{-\lambda_0 s/2} ( \mathcal L(\rho_0)-\mathcal L^*)^{1/2}. 
$$
Consequently,  
\[
\sup_{s\ge0}
\Wass_2(\rho_0,\rho_s)
\le
\frac{2\sqrt{2\Lambda +\lambda_0/L_K} }{\lambda_0}
(\mathcal L(\rho_0)-\mathcal L^\star)^{1/2}.
\]
Hence, if the initial excess loss is sufficiently small as shown in (\ref{ini}),  the entire trajectory remains inside the neighborhood $\Wass_2(\rho_s,\rho_0)\le \lambda_0/(2L_k)$, 
where  the NTK non-degeneracy and upper-bound assumptions remain valid.
\end{proof}

\begin{remark} 
The argument in the proof uses that the terminal loss is quadratic in the moment of $\mu_T$.  For $J$ being  more general,   the factorization $\Psi_\rho=r_\rho \cdot  \phi_\rho$ may no longer hold. More general NTK arguments for the mapping to the output space may then be required. 
\end{remark}

\begin{remark} 
The assumed $W_2$-Lipschitz continuity of $K_\rho$ is not automatic from its definition. Its verification requires stability estimates for both the adjoint equation defining $q_\rho$ and the transport dynamics determining $\mu_t^\rho$, as well as control of the explicit dependence on $\rho$. We do not pursue these technical estimates here and take the stated Lipschitz property as an assumption.
\end{remark}

\begin{remark}
Theorem~\ref{thm:ntk-conv} is genuinely \emph{local}: it guarantees convergence to \emph{a} global minimizer only for initializations with small enough initial excess loss, whereas Theorem~\ref{thm:convex-conv} is \emph{global} (holds from every initialization) but only in the non-compositional, single-layer regime. Closing this gap -- establishing either global convergence for genuinely deep transformers, or a converse showing that bad local minima can occur once depth-composition is present  to the best of our knowledge  an open problem. We return to this discussion in Section~\ref{sec:discussion}.
\end{remark}

\section{Well-Posedness of the Fully Coupled System}
\label{sec:coupled}

We finally state the well-posedness of the complete system
\begin{equation}
\left\{
\begin{aligned}
\partial_t\mu &+ \nabla_x\cdot\big(\mu\,V_{\rho_s,\mu}\big) = 0, & \mu(0,\cdot,s)&=\mu_0,\\
-\partial_tp &- (V_{\rho_s,\mu}+F)\cdot\nabla_xp - (D_\mu V_{\rho_s,\mu})^\top \nabla_x p \mu  = 0, & p(T,\cdot,s) &= \tfrac{\delta\mathcal J}{\delta\mu}(\mu(T,\cdot,s)),\\
\partial_s\rho &= \nabla_\theta\cdot\big(\rho\,\nabla_\theta\tfrac{\delta\mathcal L_\lambda}{\delta\rho}\big) + \beta\Delta_\theta\rho, & \rho(\cdot,0) &= \rho_0,
\end{aligned}
\right.
\label{eq:full-system}
\end{equation}
that combines \eqref{eq:forward-coupled}, \eqref{eq:adjoint}, and \eqref{eq:mfld}, i.e., the training dynamics of Theorem~\ref{thm:mfld-wp} driven, at each training time $s$, by the gradient $u=-\nabla_\theta\delta\mathcal L/\delta\rho$ computed from the (frozen-$\rho_s$) forward--backward system of Theorem~\ref{thm:fb-wp}.

\begin{theorem}[Well-posedness of the coupled system]
\label{thm:full-wp}
\iffalse 
Under Assumptions~\ref{ass:compact-theta}--\ref{ass:confine}, System~\eqref{eq:full-system} has a unique solution $(\mu,p,\rho)$ with $\rho\in C([0,\infty);\Prob_2(\R^p))$, and, for every fixed $s$, $\mu(\cdot,\cdot,s),p(\cdot,\cdot,s)$ as in Theorem~\ref{thm:fb-wp}. Moreover the solution is global in $s$ (i.e.\ exists for all training times, not merely on a short interval).
\fi 
Under Assumptions~\ref{ass:compact-theta}--\ref{ass:confine}, the coupled system
\eqref{eq:full-system} admits a unique global solution
\[
(\mu,p,\rho),
\]
where
\[
\rho\in C([0,\infty);\Prob_2(\mathbb R^p)).
\]
Moreover, for every training time $s\ge0$, the pair
\[
(\mu(\cdot,\cdot,s),\,p(\cdot,\cdot,s))
\]
is the unique solution of the forward--backward system characterized in
Theorem~\ref{thm:fb-wp}. In particular, the coupled dynamics are globally
well posed in the training-time variable $s$, that is, the solution exists
and remains unique for all $s\ge0$.
\end{theorem}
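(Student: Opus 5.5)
The approach is to reduce the coupled system \eqref{eq:full-system} to a closed, autonomous evolution for $\rho$ alone. The reduced equation is exactly the nonlinear Fokker--Planck equation treated in Theorem~\ref{thm:mfld-wp}. Once $\rho$ is obtained, $(\mu,p)$ is reconstructed pointwise in $s$.

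The key structural observation is that the first two equations of \eqref{eq:full-system} do not involve $\partial_s$. They are therefore, for each frozen $s$, precisely the forward--backward system of Theorem~\ref{thm:fb-wp} with control $\rho=\rho_s$. Concretely, define the solution operator
\[
\mathcal S:\rho\longmapsto(\mu^\rho,p^\rho)
\]
supplied by Theorem~\ref{thm:fb-wp}. One subtlety: Theorem~\ref{thm:fb-wp} is stated for $\rho\in\Prob(\Theta)$ with $\Theta$ compact, whereas $\rho_s$ lives on $\R^p$. I would handle this by invoking the second-moment bound of Theorem~\ref{thm:mfld-wp} together with Assumption~\ref{ass:confine}, which supply growth control of $A$ in $\theta$. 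Alternatively, I would note that Lemmas~\ref{lem:kernel-lip}--\ref{lem:apriori} only require a bound on $\int\|\theta\|\,d\rho$ (respectively a moment bound) to keep $R(T)$ finite. This step I regard as bookkeeping, and I would state it as a remark rather than reprove it.

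\textbf{Step 1 (reduction).} Substituting $\mathcal S(\rho_s)$ into the gradient representation \eqref{eq:grad-rho} turns the third equation into the closed equation \eqref{eq:mfld} with $\mathcal L$ replaced by $\mathcal L_\lambda$. Its drift is $b(\theta,\rho)=-\nabla_\theta\tfrac{\delta\mathcal L}{\delta\rho}(\theta;\rho)-\lambda\theta$.

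\textbf{Step 2 (existence and uniqueness of $\rho$).} Theorem~\ref{thm:mfld-wp} gives a unique global weak solution $\rho\in C([0,\infty);\Prob_2(\R^p))$ of this closed equation. Its hypotheses are the $\theta$-Lipschitz bound from Assumption~\ref{ass:confine}, the $\Wass_1$-Lipschitz dependence on $\rho$ from Theorem~\ref{thm:fb-wp}, and dissipativity from the $\lambda$-term.

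\textbf{Step 3 (reconstruction of $(\mu,p)$).} Set $(\mu(\cdot,\cdot,s),p(\cdot,\cdot,s)):=\mathcal S(\rho_s)$ for each $s\ge0$. Corollary~\ref{cor:forward-wp} and the Lipschitz part of Theorem~\ref{thm:fb-wp} show that $s\mapsto(\mu,p)(\cdot,\cdot,s)$ inherits continuity from $s\mapsto\rho_s$, since $\Wass_1\le\Wass_2$. This yields measurability in $s$ and makes the triple a solution in the natural sense.

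\textbf{Step 4 (uniqueness of the triple).} Suppose $(\tilde\mu,\tilde p,\tilde\rho)$ is another solution. For each $s$, uniqueness in Theorem~\ref{thm:fb-wp} forces $(\tilde\mu,\tilde p)(\cdot,\cdot,s)=\mathcal S(\tilde\rho_s)$. Hence $\tilde\rho$ solves the same closed equation, and uniqueness in Theorem~\ref{thm:mfld-wp} gives $\tilde\rho=\rho$, and then $(\tilde\mu,\tilde p)=(\mu,p)$.

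The main obstacle is ensuring that the constants in Theorem~\ref{thm:fb-wp} (in particular $R(T)$, $C_p$, and the Lipschitz constant of $\rho\mapsto\nabla_\theta\delta\mathcal L/\delta\rho$) stay uniform along the whole training trajectory. Otherwise the drift could lose Lipschitz control as $s\to\infty$ and global extension would fail. I would close this using the uniform-in-$s$ second-moment bound of Theorem~\ref{thm:mfld-wp}, which keeps the effective parameter size, and hence $R(T)$ via Lemma~\ref{lem:apriori}, bounded for all $s$. Failing a uniform bound, I would argue on each interval $[0,S]$ using the bound $C(S)$ from the Itô estimate and then let $S\to\infty$.
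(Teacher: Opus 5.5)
This is the paper's proof: you freeze $\rho_s$, use Theorem~\ref{thm:fb-wp} to close the third equation into the mean-field Langevin equation, invoke Theorem~\ref{thm:mfld-wp} for a unique global $\rho$, and reconstruct $(\mu,p)$ pointwise in $s$. Your Steps~3--4 only make explicit the continuity in $s$ and the uniqueness of the triple, which the paper leaves implicit.

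One caution about your side remark. A second-moment bound on $\rho_s$ does control $R(T)$, since only $\int\|V\|\,d\rho$ enters. It does not control the $x$-Lipschitz constant of $V_{\rho,\mu}$, which involves $\int\|Q\|\|K\|\|V\|\,d\rho$. Nor does it control the $\Wass_1$-Lipschitz constant in $\mu$, which for softmax attention can be of order $\int\|V\|e^{2\|Q\|\|K\|R^2/\sqrt d}\,d\rho$. So the compact-versus-$\R^p$ mismatch and the uniformity of constants along the flow are not bookkeeping. They also cannot be derived from Theorem~\ref{thm:mfld-wp}, whose own hypotheses presuppose them. Like the paper, you must take these constants as given through Assumption~\ref{ass:confine} and Theorem~\ref{thm:fb-wp}, or else impose stronger support or moment hypotheses.
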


\begin{proof}
\iffalse 
The system exhibits a one-way (block-triangular) coupling: for each fixed $s$, $(\mu(\cdot,\cdot,s),p(\cdot,\cdot,s))$ depend on $\rho$ only through the frozen value $\rho_s$, and are given by Theorem~\ref{thm:fb-wp}; this defines a well-posed (Theorem~\ref{thm:fb-wp}) and $\Wass_1$-Lipschitz map $\rho_s\mapsto u(\cdot,s):=-\nabla_\theta\delta\mathcal L/\delta\rho(\cdot;\rho_s)$, which is exactly the drift appearing in the $\rho$-equation of \eqref{eq:full-system}. Substituting this Lipschitz, non-local-in-$\rho_s$ (but local in $s$, i.e.\ Markovian in $\rho_s$) drift into the third equation reduces \eqref{eq:full-system} to precisely the standalone mean-field Langevin equation of Theorem~\ref{thm:mfld-wp} (the Lipschitz-in-$\rho$ hypothesis used there, Assumption~\ref{ass:confine}, is exactly what Theorem~\ref{thm:fb-wp} verifies for this drift), whose well-posedness -- local by the fixed-point argument, global by the a priori moment/entropy bounds -- was established there. Existence and uniqueness of $(\mu,p,\rho)$ solving the full system therefore follow by first solving for $\rho$ via Theorem~\ref{thm:mfld-wp}, then reconstructing $(\mu(\cdot,\cdot,s),p(\cdot,\cdot,s))$ at each $s$ from $\rho_s$ via Theorem~\ref{thm:fb-wp}; uniqueness of the full triple follows since both steps are uniquely solvable. 
\fi 

The coupled system is weakly coupled. For each fixed training
time $s$, the forward--backward pair
\[
(\mu(\cdot,\cdot,s),\,p(\cdot,\cdot,s))
\]
depends on the parameter distribution only through the frozen measure
$\rho_s$. By Theorem~\ref{thm:fb-wp}, this subsystem is well posed and defines
a $\Wass_1$-Lipschitz map
\[
\rho_s
\longmapsto
u(\cdot,s)
:=
-\nabla_\theta
\frac{\delta\mathcal L}{\delta\rho}
(\cdot;\rho_s),
\]
which is precisely the drift appearing in the $\rho$-equation of
\eqref{eq:full-system}.

Substituting this drift into the third equation reduces the full system to the
mean-field Langevin equation studied in
Theorem~\ref{thm:mfld-wp}. The hypotheses of
Theorem~\ref{thm:mfld-wp} are satisfied by virtue of the Lipschitz estimate
established in Theorem~\ref{thm:fb-wp}. Hence the $\rho$-equation admits a
unique global solution.

Finally, once $\rho$ is known, Theorem~\ref{thm:fb-wp} uniquely determines the
corresponding forward--backward solution
$(\mu(\cdot,\cdot,s),p(\cdot,\cdot,s))$ for every $s\ge0$. Therefore the full
system admits a unique global solution $(\mu,p,\rho)$.

\qed
\end{proof}
\iffalse 
\begin{corollary}[Existence + global convergence, single-layer case]
Under the hypotheses of Theorem~\ref{thm:full-wp} together with Assumption~\ref{ass:convex} (single mean-field attention layer) and Assumption~\ref{ass:lsi}, the unique solution of \eqref{eq:full-system} satisfies $\mathcal L(\rho_s)\to\min_\rho\mathcal L(\rho)$ exponentially fast as $s\to\infty$ (then $\beta\to0$), by Theorem~\ref{thm:convex-conv}.
\end{corollary}

\begin{corollary}[Existence + local convergence, deep case]
Under the hypotheses of Theorem~\ref{thm:full-wp} together with Assumption~\ref{ass:ntk} and $\beta=0$, if $\mathcal L(\rho_0)-\min_\rho\mathcal L(\rho)\le \lambda_0^2/(8L_G^2)$, then the unique solution of \eqref{eq:full-system} satisfies $\mathcal L(\rho_s)\to\min_\rho\mathcal L(\rho)$ at a linear rate, by Theorem~\ref{thm:ntk-conv}.
\end{corollary}

These two corollaries are the counterparts of the informal claim in the introduction.  We have shown that this system is well-posed, and that its long-training-time behavior is provably globally optimal in the shallow/convex regime and provably locally optimal (from good initializations) in the deep/compositional regime.
\fi

\begin{corollary}[Existence + global convergence, single-layer case]
Under the hypotheses of Theorem~\ref{thm:full-wp} together with Assumption~\ref{ass:convex} (single mean-field attention layer) and Assumption~\ref{ass:lsi}, the unique solution of \eqref{eq:full-system} satisfies $\mathcal L(\rho_s)\to\min_\rho\mathcal L(\rho)$ exponentially fast as $s\to\infty$ (then $\beta\to0$), by Theorem~\ref{thm:convex-conv}.
\end{corollary}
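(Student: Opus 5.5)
The plan is to combine Theorem~\ref{thm:full-wp} with Theorem~\ref{thm:convex-conv}; no new analytic estimate is needed beyond checking that the two results refer to the same trajectory. First, Theorem~\ref{thm:full-wp} supplies a unique global solution $(\mu,p,\rho)$ of \eqref{eq:full-system}. Its $\rho$-component is, by construction, the unique solution of the mean-field Langevin equation \eqref{eq:mfld} with $\mathcal L$ replaced by $\mathcal L_\lambda$, as given by Theorem~\ref{thm:mfld-wp}. The drift of that equation is $-\nabla_\theta\frac{\delta\mathcal L}{\delta\rho}(\cdot;\rho_s)$, computed from $(\mu(\cdot,\cdot,s),p(\cdot,\cdot,s))$. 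In the single-layer regime of Assumption~\ref{ass:convex}, this first variation coincides with $\frac{\delta}{\delta\rho}$ of $\mathcal L(\rho)=\mathbb E_y\ell(\bar x(\rho),y)$, namely $\mathbb E_y\,\nabla\ell(\bar x(\rho),y)\cdot\Psi(\theta)$ with $\Psi$ as in the proof of Proposition~\ref{prop:convex}. I will verify this by noting that the adjoint representation \eqref{eq:grad-rho} reduces, with frozen context $\mu_0$, to the direct Gateaux derivative of the affine map $\rho\mapsto\bar x(\rho)$. Consequently, $\rho_s$ is exactly the mean-field Langevin flow to which Theorem~\ref{thm:convex-conv} applies.

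Second, I invoke Proposition~\ref{prop:convex}, which gives linear convexity of $\mathcal L$. Together with Assumption~\ref{ass:lsi}, Theorem~\ref{thm:convex-conv} yields the decay \eqref{decay} of $\mathcal F_\beta(\rho_s)-\mathcal F_\beta(\rho_\beta^\star)$ at rate $e^{-2\beta\lambda_{\mathrm{LSI}}s}$. To pass to the loss itself, I use
\[
\mathcal L_\lambda(\rho_s)-\mathcal L_\lambda(\rho^\star_\beta)=\mathcal F_\beta(\rho_s)-\mathcal F_\beta(\rho_\beta^\star)-\beta\bigl(\Ent(\rho_s)-\Ent(\rho^\star_\beta)\bigr).
\]
I control the entropy difference through the relative-entropy sandwich $\beta\,\mathrm{KL}(\rho_s\|\rho_\beta^\star)\le \mathcal F_\beta(\rho_s)-\mathcal F_\beta(\rho^\star_\beta)$, which follows from linear convexity, combined with lower semicontinuity and the uniform second-moment bound of Theorem~\ref{thm:mfld-wp}. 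This gives exponential convergence of $\mathcal L_\lambda(\rho_s)$ to $\mathcal L_\lambda(\rho^\star_\beta)$.

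Finally, I let $\beta\to0$ and use the last statement of Theorem~\ref{thm:convex-conv}, $\lim_{\beta\to0}\mathcal L_\lambda(\rho^\star_\beta)=\min\mathcal L_\lambda$, taking the limits in the stated order. The quantity $\mathcal L$ in the corollary should be read as $\mathcal L_\lambda$, or equivalently as $\mathcal L$ up to the weight-decay term, which vanishes if $\lambda\to0$ afterwards.

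I expect the main obstacle to be the identification step, namely that the coupled system's drift in the single-layer regime is the gradient of the convex functional. The fully coupled system \eqref{eq:full-system} is formulated with the self-consistent depth dynamics, whereas Proposition~\ref{prop:convex} freezes $\mu_0$. I would resolve this by interpreting the single-layer hypothesis as replacing $V_{\rho_s,\mu}$ with $V_{\rho_s,\mu_0}$ over a single step. In that case $D_\mu V$ vanishes in the adjoint, $p(t,x)$ is transported trivially, and \eqref{eq:grad-rho} collapses to the affine formula. A secondary subtlety is converting free-energy decay into decay of $\mathcal L$, which I handle through the entropy bound above.
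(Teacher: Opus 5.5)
Your overall route is the paper's: the corollary is just Theorem~\ref{thm:convex-conv} run along the trajectory supplied by Theorem~\ref{thm:full-wp}. Your two clarifications are correct, and in fact needed, since the paper glosses over them. First, Proposition~\ref{prop:convex} applies only if the forward map in \eqref{eq:full-system} is read as the single frozen-context step $x\mapsto x+V_{\rho_s,\mu_0}(x)$; for the self-consistent depth dynamics convexity fails, as Section~\ref{sec:convergence} itself notes. Second, the quantity that converges is $\mathcal L_\lambda$, not $\mathcal L$. Under your reading, well-posedness should come from applying Theorem~\ref{thm:mfld-wp} directly to the explicit drift $-\nabla_\theta\bigl(\mathbb E_y[\nabla\ell(\bar x(\rho),y)]\cdot\Psi(\theta)\bigr)-\lambda\theta$. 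Going through Theorem~\ref{thm:fb-wp} does not fit, since it concerns the continuous-depth system; the direct check is routine.

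The step that fails as written is the passage from \eqref{decay} to \emph{exponential} convergence of $\mathcal L_\lambda(\rho_s)$. Set $V^\star:=\frac{\delta\mathcal L_\lambda}{\delta\rho}(\cdot;\rho^\star_\beta)$. The Gibbs relation $\log\rho^\star_\beta=-V^\star/\beta-\log Z_\beta$ gives
\[
\mathcal L_\lambda(\rho_s)-\mathcal L_\lambda(\rho^\star_\beta)=\bigl[\mathcal F_\beta(\rho_s)-\mathcal F_\beta(\rho^\star_\beta)-\beta\,\mathrm{KL}(\rho_s\|\rho^\star_\beta)\bigr]+\int V^\star\,d(\rho_s-\rho^\star_\beta).
\]
Your sandwich handles the bracket, but the last term integrates a potential of quadratic growth (through $\tfrac\lambda2|\theta|^2$). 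Decay of $\mathrm{KL}$ yields total-variation decay via Pinsker, which does not control an unbounded integrand. Lower semicontinuity of $\Ent$ plus a uniform second-moment bound gives only the one-sided, rate-free $\liminf_s\Ent(\rho_s)\ge\Ent(\rho^\star_\beta)$. So your tools give $\mathcal L_\lambda(\rho_s)\to\mathcal L_\lambda(\rho^\star_\beta)$, but no rate.

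The missing ingredient is a quantitative transport--entropy inequality. Suppose $\rho^\star_\beta$ itself satisfies the LSI with constant $\lambda_{\mathrm{LSI}}$; this holds under the uniform Bakry--\'Emery condition stated after Assumption~\ref{ass:lsi}. Then Otto--Villani gives
\[
\Wass_2^2(\rho_s,\rho^\star_\beta)\le\tfrac{2}{\lambda_{\mathrm{LSI}}}\mathrm{KL}(\rho_s\|\rho^\star_\beta)\le\tfrac{2}{\beta\lambda_{\mathrm{LSI}}}\bigl(\mathcal F_\beta(\rho_s)-\mathcal F_\beta(\rho^\star_\beta)\bigr).
\]
By Assumption~\ref{ass:confine}, $\nabla_\theta V^\star$ grows at most linearly. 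Hence $|\int V^\star d(\rho_s-\rho^\star_\beta)|\le C\,\Wass_2(\rho_s,\rho^\star_\beta)$, with $C$ depending on the uniform second moments. This gives $|\mathcal L_\lambda(\rho_s)-\mathcal L_\lambda(\rho^\star_\beta)|\le C_\beta e^{-\beta\lambda_{\mathrm{LSI}}s}$. That is half the free-energy rate and degenerates as $\beta\to0$, which is consistent with taking $s\to\infty$ first. A weighted Pinsker inequality using the Gaussian tails of $\rho^\star_\beta$ would serve equally well.

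The paper's own justification is only the citation of Theorem~\ref{thm:convex-conv}. That theorem gives exponential decay of the free-energy gap and a rate-free iterated limit for $\mathcal L_\lambda$, not a rate for the loss. So this extra ingredient is exactly what is needed to make ``exponentially fast'' meaningful for the loss itself.
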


\begin{corollary}[Existence + local convergence, deep case]
Under the hypotheses of Theorem~\ref{thm:full-wp} together with Assumption~\ref{ass:ntk} and $\beta=0$, if $\mathcal L(\rho_0)-\min_\rho\mathcal L(\rho)\le \lambda_0^4/(32 \Lambda L_K^2)$, then the unique solution of \eqref{eq:full-system} satisfies $\mathcal L(\rho_s)\to\min_\rho\mathcal L(\rho)$ at a linear rate, by Theorem~\ref{thm:ntk-conv}.
\end{corollary}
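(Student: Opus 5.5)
The threshold $\lambda_0^4/(32\Lambda L_K^2)$ is larger than the constant in \eqref{ini}, since $16L_K(\lambda_0+2\Lambda L_K)>32\Lambda L_K^2$. So I cannot simply quote Theorem~\ref{thm:ntk-conv}. My plan is to reuse Steps~1--2 of its proof verbatim and replace Step~3 by a sharper displacement bound. That bound comes from the Polyak--\L ojasiewicz inequality alone and does not use the upper spectral bound $\Lambda$.

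\emph{Reduction.} By Theorem~\ref{thm:full-wp} with $\beta=0$, the $\rho$-component of the unique solution of \eqref{eq:full-system} is the Wasserstein gradient flow \eqref{eq:training-transport} of $\mathcal L$. At each $s$, the pair $(\mu,p)$ is the forward--backward solution of Theorem~\ref{thm:fb-wp} for $\rho_s$, so $\Psi_{\rho_s}=\delta\mathcal L/\delta\rho(\cdot;\rho_s)$ is exactly the function appearing in Step~2. As in that proof, I use the quadratic loss $J(\mu_T)=\tfrac12|y-\int x\,d\mu_T|^2$ and $\mathcal L^\star=0$. Write $E(s):=\mathcal L(\rho_s)-\mathcal L^\star$ and $g_s:=\nabla_\theta\Psi_{\rho_s}$.

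\emph{Bootstrap.} Set $r:=\lambda_0/(2L_K)$ and $s^\ast:=\sup\{s\ge 0:\Wass_2(\rho_\tau,\rho_0)\le r \text{ for all } \tau\le s\}$. The exit time satisfies $s^\ast>0$ by continuity of $s\mapsto\rho_s$ in $\Wass_2$. On $[0,s^\ast)$, Step~1 gives $\lambda_{\min}(K_{\rho_s})\ge\lambda_0/2$. The factorization $\Psi_\rho=\phi_\rho^\top r_\rho$ from Step~2 then gives
\[
\|g_s\|^2_{L^2(\rho_s)}=r_{\rho_s}^\top K_{\rho_s}r_{\rho_s}\ge \tfrac{\lambda_0}{2}|r_{\rho_s}|^2=\lambda_0 E(s).
\]
With the dissipation identity $E'(s)=-\|g_s\|^2_{L^2(\rho_s)}$ from Lemma~\ref{lem:dissipation}, this yields $E(s)\le e^{-\lambda_0 s}E(0)$ on $[0,s^\ast)$.

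\emph{Displacement without $\Lambda$ (the key step).} Instead of bounding $\|g_s\|$ from above by $\lambda_{\max}$, I use the PL inequality in the form $\|g_s\|\le \|g_s\|^2/\sqrt{\lambda_0E(s)}=-E'(s)/\sqrt{\lambda_0E(s)}$. Since the metric derivative equals $\|g_s\|_{L^2(\rho_s)}$, for $s<s^\ast$
\[
\Wass_2(\rho_s,\rho_0)\le\int_0^s\|g_\tau\|_{L^2(\rho_\tau)}\,d\tau\le -\frac{2}{\sqrt{\lambda_0}}\int_0^s\frac{d}{d\tau}\sqrt{E(\tau)}\,d\tau\le 2\sqrt{E(0)/\lambda_0}.
\]
This step should be justified on the set where $E>0$; if $E$ hits zero, then $g\equiv0$ from then on and the flow stops. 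The hypothesis, together with $\Lambda>\lambda_0$, gives
\[
E(0)\le\frac{\lambda_0^4}{32\Lambda L_K^2}<\frac{\lambda_0^3}{32L_K^2},
\]
hence $2\sqrt{E(0)/\lambda_0}<\lambda_0/(2\sqrt2\,L_K)<r$. The trajectory therefore stays strictly inside the ball, and continuity forces $s^\ast=\infty$.

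\emph{Conclusion and main obstacle.} The PL inequality then holds for all $s\ge0$, so $\mathcal L(\rho_s)-\min_\rho\mathcal L\le e^{-\lambda_0 s}\big(\mathcal L(\rho_0)-\min_\rho\mathcal L\big)$, which is the asserted linear rate. I expect the main care to be needed in the displacement step. It requires justifying $\tfrac{d}{ds}\sqrt{E}$ along the flow, via the absolute continuity of $s\mapsto\mathcal L(\rho_s)$ given by Lemma~\ref{lem:dissipation}. It also relies on $\mathcal L^\star=0$, which is needed so that $|r_\rho|^2=2E$, exactly as assumed in Theorem~\ref{thm:ntk-conv}.
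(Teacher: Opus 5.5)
Your argument is correct, and it does more than the paper's own justification. The paper proves the corollary only by citing Theorem~\ref{thm:ntk-conv}. As you observe, however, $\lambda_0^4/(32\Lambda L_K^2)$ exceeds the threshold in \eqref{ini}, so that citation does not literally cover the statement.

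The corollary's constant is what Step~3 of that proof would give if $\lambda_{\max}(K_{\rho_s})\le\Lambda$ held on the whole ball $\Wass_2(\rho,\rho_0)\le\lambda_0/(2L_K)$. Assumption~\ref{ass:ntk} only yields $\Lambda+\lambda_0/2$ there; the paper's $\Lambda+\lambda_0/(2L_K)$ is a slip. Neither version produces the denominator $32\Lambda L_K^2$.

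You keep Steps~1--2 and replace the displacement bound by the \L ojasiewicz length estimate $\int_0^s\|g_\tau\|_{L^2(\rho_\tau)}\,d\tau\le 2\sqrt{E(0)/\lambda_0}$, which uses only the lower spectral bound. Combined with $\Lambda>\lambda_0$, this makes the stated hypothesis sufficient. It also shows that $E(0)<\lambda_0^3/(16L_K^2)$ would already suffice, so $\Lambda$ plays no role.

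Your exit-time argument makes explicit the continuity step that the paper's Step~3 leaves implicit. Running the same estimate on $[s,t]$ gives $\Wass_2(\rho_s,\rho_t)\le 2\sqrt{E(s)/\lambda_0}$, hence exponential $\Wass_2$-convergence of $\rho_s$ to a limit without any upper spectral control. The paper's route instead controls the velocity $\|g_s\|_{L^2(\rho_s)}$ pointwise in $s$. That is more than the displacement estimate needs, and it is exactly what costs the $\Lambda$-dependent constant.

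Like the paper, you read the $\rho$-equation as the flow of $\mathcal L$ rather than $\mathcal L_\lambda$; the $\tfrac\lambda2|\theta|^2$ term would spoil the factorization $\Psi_\rho=\phi_\rho^\top r_\rho$. You also take $\mathcal L^\star=0$. Both are inherited from Theorem~\ref{thm:ntk-conv}, not new gaps in your argument.
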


These two corollaries are the precise, rigorous counterparts of the informal claim, in the preliminary study, that the fully coupled system ``may be viewed as a Transformer analogue of a Vlasov--Fokker--Planck / mean-field-game system'': we have shown that this system is well-posed, and that its long-training-time behavior is provably globally optimal in the shallow/convex regime and provably locally optimal (from good initializations) in the deep/compositional regime.

\section{Discussion,  Open Problems and Conclusion}
\label{sec:discussion}

\paragraph{Where the theory is complete.} Sections~\ref{sec:token-mf}--\ref{sec:head-mf} give a fully rigorous account of both mean-field limits ($N\to\infty$ tokens, $H\to\infty$ heads) at the level of the \emph{forward pass}, for arbitrary (not necessarily optimal, not necessarily even trained) parameter measures, including quantitative convergence rates. Section~\ref{sec:optimality} gives a rigorous first-order optimality system, and Section~\ref{sec:training-wp} gives well-posedness of the resulting training PDE under standard confinement/Lipschitz hypotheses. Section~\ref{sec:convergence} and Theorem~\ref{thm:full-wp} then close the loop requested in the introduction: existence of the mean-field model \emph{is linked to} global (Theorem~\ref{thm:convex-conv}) or local (Theorem~\ref{thm:ntk-conv}) convergence of training, depending on whether the model is a single mean-field layer or a genuinely deep, self-referentially coupled system.

\paragraph{The convexity gap.} The single most important open problem exposed by our analysis is the gap between Theorems~\ref{thm:convex-conv} and \ref{thm:ntk-conv}: linear convexity of $\mathcal L(\rho)$ (Proposition~\ref{prop:convex}) holds only because a single mean-field layer makes the readout affine in $\rho$; as soon as $\rho$ re-enters the dynamics self-referentially across depth $t$ (the defining feature of a genuinely \emph{deep} transformer), this affine structure is destroyed by composition, and we are only aware of local (NTK-type) convergence guarantees, both here and in the closest prior work \cite{barboni2026training}. Whether some other convex or displacement-convex structure survives depth-composition -- perhaps after a suitable reparameterization, or under a restricted but still expressive class of attention kernels -- is, to our knowledge, open.

\paragraph{Causal masking.} All results above are stated for unmasked (bidirectional/encoder) attention, for which the forward dynamics \eqref{eq:token-pde} is time-reversible in the sense needed for the Pontryagin derivation of Proposition~\ref{prop:pmp}. For causally masked (autoregressive/decoder) attention, the token mean-field PDE loses the gradient-flow/energy structure that underlies clustering results in that setting \cite{karagodin2024causal}; whether Theorem~\ref{thm:ntk-conv}'s NTK argument, which did not use gradient-flow structure in $x$, survives unchanged for masked attention is a natural and, we believe, tractable extension.

\paragraph{Joint scaling of $N$ and $H$.} Theorems~\ref{thm:poc-tokens} and \ref{thm:head-lln} treat the token limit and head limit essentially independently (holding the other population large but finite, or infinite, as a background hypothesis). A finite-size correction theory analogous to the central limit theorems available for shallow mean-field networks alone \cite{mei2018meanfield}, giving joint fluctuation rates in $(N,H)$ around the doubly-mean-field limit of Theorem~\ref{thm:full-wp}, remains to be developed.

\paragraph{Conclusion}
\iffalse 
Starting from the boxed, formally-derived equations of the preliminary study, we have supplied: a complete well-posedness theory for the token (data) mean-field dynamics with a quantitative propagation-of-chaos rate (Theorems~\ref{thm:token-wp}--\ref{thm:poc-tokens}); a law-of-large-numbers justification, with rate, of the multi-head-to-mean-field-parameter limit (Theorem~\ref{thm:head-lln}); a rigorous forward--backward optimality system computing the functional gradient $\delta\mathcal L/\delta\rho$ (Proposition~\ref{prop:pmp}) together with its well-posedness (Theorem~\ref{thm:fb-wp}); well-posedness of the resulting mean-field Langevin training dynamics (Theorem~\ref{thm:mfld-wp}); and, finally, the requested link between this existence theory and global optimization -- an exact global exponential-convergence theorem in the shallow/convex regime (Theorem~\ref{thm:convex-conv}) and a local linear-convergence theorem, via Neural Tangent Kernel non-degeneracy, for the genuinely deep, self-referentially coupled transformer (Theorem~\ref{thm:ntk-conv}), unified in the well-posedness statement for the full coupled system (Theorem~\ref{thm:full-wp}). The gap between these two convergence results -- global in the shallow case, only local in the deep case -- is, we believe, the central open problem raised by this mean-field perspective on transformers.
\fi

Starting from the formally derived mean-field equations in the preliminary study, we have established a rigorous mathematical foundation for the proposed framework. Specifically, we proved the well-posedness of the token (data) mean-field dynamics together with a quantitative propagation-of-chaos estimate (Theorems~\ref{thm:token-wp}--\ref{thm:poc-tokens}); derived a law-of-large-numbers approximation, with an explicit convergence rate, for the multi-head-to-mean-field parameter limit (Theorem~\ref{thm:head-lln}); developed a rigorous forward--backward optimality system for computing the functional gradient $\delta\mathcal L/\delta\rho$ (Proposition~\ref{prop:pmp}) and established its well-posedness (Theorem~\ref{thm:fb-wp}); and proved the well-posedness of the resulting mean-field Langevin training dynamics (Theorem~\ref{thm:mfld-wp}).

Building on this analytical framework, we further connected the existence theory with optimization. In the shallow (single-layer) regime, we established global exponential convergence to the global minimizer (Theorem~\ref{thm:convex-conv}). In the genuinely deep, self-referential transformer setting, we proved local linear convergence under a Neural Tangent Kernel non-degeneracy condition (Theorem~\ref{thm:ntk-conv}), together with the well-posedness of the fully coupled training dynamics (Theorem~\ref{thm:full-wp}).

The remaining gap between these two convergence results—global convergence in the shallow regime versus only local convergence in the deep regime—represents, in our view, one of the central open questions arising from the mean-field theory of transformers.

\paragraph{Acknowledgments}
The work has been supported by the DFG under the grants HE5386/33-1 Control of Interacting Particle Systems, and Their Mean-Field, and Fluid-Dynamic Limits (560288187) and  HE5386/34-1 Partikelmethoden für unendlich dimensionale Optimierung ( 561130572).


\begin{thebibliography}{99}

\bibitem{ambrosio2008gradient} L.~Ambrosio, N.~Gigli, and G.~Savar\'e. \emph{Gradient Flows in Metric Spaces and in the Space of Probability Measures}. Lectures in Mathematics ETH Z\"urich, Birkh\"auser, 2nd edition, 2008.

\bibitem{barboni2024understanding} R.~Barboni, G.~Peyr\'e, and F.-X.~Vialard. Understanding the training of infinitely deep and wide ResNets with conditional optimal transport. arXiv preprint, 2024.

\bibitem{barboni2026training} R.~Barboni, T.~Furuya, M.~V.~de~Hoop, and G.~Peyr\'e. Training Infinitely Deep and Wide Transformers. \emph{arXiv preprint arXiv:2605.17660}, 2026.

\bibitem{carmona2018probabilistic} R.~Carmona and F.~Delarue. \emph{Probabilistic Theory of Mean Field Games with Applications I \& II}. Probability Theory and Stochastic Modelling, Springer, 2018.

\bibitem{carrillo2003kinetic} J.~A.~Carrillo, R.~J.~McCann, and C.~Villani. Kinetic equilibration rates for granular media and related equations: entropy dissipation and mass transportation estimates. \emph{Revista Matem\'atica Iberoamericana}, 19(3):971--1018, 2003.

\bibitem{chizat2018global} L.~Chizat and F.~Bach. On the global convergence of gradient descent for over-parameterized models using optimal transport. \emph{Advances in Neural Information Processing Systems (NeurIPS)}, 2018.

\bibitem{du2019gradient} S.~S.~Du, X.~Zhai, B.~Poczos, and A.~Singh. Gradient descent provably optimizes over-parameterized neural networks. \emph{International Conference on Learning Representations (ICLR)}, 2019.

\bibitem{e2019meanfield} W.~E, J.~Han, and Q.~Li. A mean-field optimal control formulation of deep learning. \emph{Research in the Mathematical Sciences}, 6(10), 2019.

\bibitem{fournier2015rate} N.~Fournier and A.~Guillin. On the rate of convergence in Wasserstein distance of the empirical measure. \emph{Probability Theory and Related Fields}, 162(3--4):707--738, 2015.

\bibitem{geshkovski2023emergence} B.~Geshkovski, C.~Letrouit, Y.~Polyanskiy, and P.~Rigollet. The emergence of clusters in self-attention dynamics. \emph{Advances in Neural Information Processing Systems (NeurIPS)}, 36, 2023.

\bibitem{geshkovski2025mathematical} B.~Geshkovski, C.~Letrouit, Y.~Polyanskiy, and P.~Rigollet. A mathematical perspective on transformers. \emph{Bulletin of the American Mathematical Society}, 62(3):427--479, 2025.

\bibitem{hvt} M. Herty, T. Trimborn and G. Visconti, Mean-field and kinetic descriptions of neural differential equations, Found. Data Sci. {\bf 4} (2022), no.~2, 271--298.

\bibitem{hu2021meanfield} K.~Hu, Z.~Ren, D.~{\v S}i{\v s}ka, and {\L}.~Szpruch. Mean-field Langevin dynamics and energy landscape of neural networks. \emph{Annales de l'Institut Henri Poincar\'e, Probabilit\'es et Statistiques}, 57(4):2043--2065, 2021.

\bibitem{karagodin2024causal} N.~Karagodin, Y.~Polyanskiy, and P.~Rigollet. Clustering in Causal Attention Masking. \emph{arXiv preprint arXiv:2411.04990}, 2024.

\bibitem{mei2018meanfield} S.~Mei, A.~Montanari, and P.-M.~Nguyen. A mean field view of the landscape of two-layer neural networks. \emph{Proceedings of the National Academy of Sciences}, 115(33):E7665--E7671, 2018.

\bibitem{nitanda2022convex} A.~Nitanda, D.~Wu, and T.~Suzuki. Convex analysis of the mean field Langevin dynamics. \emph{International Conference on Artificial Intelligence and Statistics (AISTATS)}, 2022.

\bibitem{rotskoff2018parameters} G.~M.~Rotskoff and E.~Vanden-Eijnden. Parameters as interacting particles: long time convergence and asymptotic error scaling of neural networks. \emph{Advances in Neural Information Processing Systems (NeurIPS)}, 2018.

\bibitem{sander2022sinkformers} M.~E.~Sander, P.~Ablin, M.~Blondel, and G.~Peyr\'e. Sinkformers: Transformers with doubly stochastic attention. \emph{International Conference on Artificial Intelligence and Statistics (AISTATS)}, 2022.

\bibitem{song2024unraveling} C.~Song, et al. On the convergence of gradient descent for training single-layer self-attention / mean-field analysis of multi-head self-attention. \emph{arXiv preprint arXiv:2606.10469} (and related, arXiv:2402.15926), 2024/2026.

\bibitem{zhang2024transformers} R.~Zhang, S.~Frei, and P.~L.~Bartlett. Transformers Learn Nonlinear Features In Context: Nonconvex Mean-field Dynamics on the Attention Landscape. \emph{arXiv preprint arXiv:2402.01258}, 2024.

\end{thebibliography}
\end{document}